\documentclass[11pt]{article}
\usepackage{amsmath, amssymb, amsthm}
\usepackage{booktabs}  
\usepackage{bm}
\usepackage{subcaption}  
\usepackage{chngcntr} 
\counterwithin{figure}{section}   
\counterwithin{table}{section}    
\counterwithin{equation}{section} 
\usepackage{authblk}

\usepackage{geometry}
\usepackage[colorlinks=true, citecolor=blue, linkcolor=red, urlcolor=blue]{hyperref}
\usepackage{algorithmic}
\usepackage{enumitem}
\newtheorem{proposition}{Proposition}[section]

\newtheorem{assumption}{Assumption}[section]
\newtheorem{theorem}{Theorem}[section]
\newtheorem{corollary}{Corollary}[section]
\newtheorem{lemma}{Lemma}[section]
\newtheorem{remark}{Remark}[section]
\theoremstyle{remark}
\usepackage{graphicx}   
\newcommand{\keywords}[1]{%
  \par\noindent\textbf{Keywords:} #1%
}
\usepackage{mathrsfs} 
\begin{document}
\title{Nonlocal Hamiltonian Dynamics on Sparse L\'evy Graphs: Spectral Analysis and Multimodal Sampling}
\author[a,b,f]{Miaolei Zheng} 
\author[c,d,e]{Ting Gao\thanks{Corresponding author: gao0716@hust.edu.cn}}
\author[b,f]{Jinqiao Duan} 

\affil[a]{School of Mathematics, Sun Yat-sen University, Guangzhou 510275, China}
\affil[b]{School of Sciences, Great Bay University, Dongguan 523000, China}
\affil[c]{School of Mathematics and Statistics, Huazhong University of Science and Technology, Wuhan 430074, China}
\affil[d]{Center for Mathematical Science, Huazhong University of Science and Technology, Wuhan 430074, China}
\affil[e]{Steklov-Wuhan Institute for Mathematical Exploration, Huazhong University of Science and Technology, Wuhan 430074, China}
\affil[f]{Guangdong Provincial Key Laboratory of Mathematical and Neural Dynamical Systems, Great Bay University, Dongguan 523000, China}
\date{}
\maketitle
\begin{abstract}
We develop a sparse graph method for transporting probability mass toward multimodal target distributions through damped nonlocal Hamiltonian dynamics. The formulation combines logarithmic-mean mobility with symmetric L{\'e}vy-type interaction weights, coupling the evolving density to an edge momentum field.  A graph constructed from nearest-neighbor connections and sampled long-range edges provides direct mass exchange between spatially separated regions. Once the graph is constructed, the density evolution is deterministic, and each update costs linear in the number of nodes and the long-range sampling budget. Linearization around the target distribution yields a damped oscillator governed by a weighted graph Laplacian. Its spectrum characterizes the interaction between nonlocal connectivity and inertia, with the L\'evy exponent $\alpha$ tuning the nonlocal connectivity: the spectral gap determines the optimal asymptotic damping, while the largest eigenvalue governs the time-step stability. Experiments on synthetic multimodal distributions demonstrate improved mode balance and more stable mode coverage relative to first-order and MCMC baselines. The resulting framework provides a sparse implementation of nonlocal inertial density transport for sampling problems with low-dimensional spatial structure. 

\par\noindent\textbf{MSC 2020: }{65C05, 60G51, 65M15}\par

\keywords{Multimodal sampling, nonlocal Hamiltonian dynamics, L{\'e}vy jumps,
Wasserstein gradient flow, mode collapse}
\end{abstract}

\section{Introduction}
Sampling from multimodal distributions is an important task in Bayesian inference~\cite{Kruschke2010BayesianDA}, computational physics~\cite{Ying2023MultimodalSV}, and generative modeling~\cite{Tran2025DiffusionbasedSL}. In many practical applications, the target distribution exhibits multiple isolated modes separated by low-density regions. Standard Markov chain Monte Carlo (MCMC) methods~\cite{Hubin2016ModeJM} rely on local update proposals. When modes are separated by low-density regions, the method struggles to cross the intervening valleys and lingers around a single mode for extended periods.
 Even with sufficient iterations, the chain may still miss modes with significant probability mass under finite computational budgets. 
 Mode collapse limits Bayesian inference and generative diversity in
high-stakes applications such as drug discovery and Bayesian decision
making~\cite{He2025NoTN}.

A common strategy for sampling from complex distributions is to adopt gradient-based dynamics.
Gradient-based samplers arise as discretizations of the Wasserstein
gradient flow of the KL divergence.
Although they perform well on many problems, they still face difficulties on multimodal distributions where modes are separated by low-density regions.
Langevin-type methods~\cite{dalalyan2017theoretical,roberts1996exponential} such as MALA and ULA rely on local gradient updates that depend only on the current position. As a result, escaping a low-density region requires a waiting time that grows exponentially with the barrier height.
Stein variational gradient descent (SVGD)~\cite{wang2019stein,3294996.3295071} introduces kernel-based interactions among particles. However, its bandwidth—whether set by median heuristic or cross-validation—fundamentally restricts the interaction range. When the separation between modes exceeds this scale, distant particles become effectively decoupled and the ensemble collapses to a single mode. In addition, the underdamped Langevin dynamics (ULD) sampler~\cite{cheng2018underdampedlangevinmcmcnonasymptotic} still needs continuous paths to cross low-density regions. Recently, Zhou et al.~\cite{zhou2025accelerated} introduced Nesterov-type
acceleration for sampling on discrete graphs, highlighting the benefit of
inertial transport for accelerating mass redistribution. However, the
	resulting exploration is still constrained by the connectivity of the
	underlying graph, especially when distant modes are weakly connected.

Direct communication between distant locations provides a way to break this barrier. L{\'e}vy processes and fractional operators~\cite{applebaum2009levy, duan2015Levy}
offer exactly this kind of direct communication, permitting jumps over
arbitrarily large distances.
Many sampling methods have exploited this mechanism. Ye and Zhu \cite{HMC_levy} combined L{\'e}vy jumps with Hamiltonian dynamics to improve multimodal exploration. Fractional Langevin Monte 
Carlo~\cite{Simsekli2017FractionalLM} replaces Gaussian noise with heavy-tailed L{\'e}vy 
noise, enabling large jumps that accelerate multimodal exploration. This idea has been extended to underdamped settings~\cite{HMC_levy} and combinatorial optimization~\cite{wang2025neurips-fractional}. In these settings, L{\'e}vy flights escape local optima in polynomial time, whereas Gaussian-driven dynamics require exponential time. More recently, De Marco et al.~\cite{demarco2026schrodingerbridgesjumpstime} also incorporated L{\'e}vy-type jumps into the Schrödinger bridge framework for time series generation. 
Motivated by these developments, we combine L{\'e}vy-type long-range
	interactions with accelerated graph-based transport, leading to a
	nonlocal Hamiltonian sampling framework on sparse graphs.

In this paper, we propose a density-level nonlocal Hamiltonian sampler driven by L\'evy jump kernels, formulated on the nonlocal Wasserstein metric. Our work makes three contributions. First, we develop a
second-order density transport on sparse graphs, combining local
nearest-neighbor connections with L\'evy-type long-range edges. This construction gives, once the graph has been built, a per-step cost reduction from \(O(N^2)\) to \(O(N(k_{\mathrm{local}}+m))\), where \(N\) is the number of nodes, \(k_{\mathrm{local}}\) is the local KNN parameter, and \(m\) is the long-range sampling budget per node. Second, we characterize the linearized
relaxation through the weighted graph spectrum. In particular, we derive
the optimal damping from the spectral gap and separate the
continuous-time relaxation rate from the temporal and
graph-discretization errors. Third, numerical experiments on multimodal
targets show improved mode balance relative to the tested first-order,
MCMC, MALA, and ULD baselines. The method evolves an explicit density on
a fixed spatial grid and is evaluated in low dimensions, including
two-dimensional PCA representations of MNIST and dihedral-angle
densities of alanine dipeptide.

This paper is organized as follows. In Section 2, we derive the damped nonlocal Hamiltonian system and present its graph discretization. Section 3 studies the linearized relaxation and numerical approximation. In Section 4, we compare the proposed method against baseline methods on synthetic benchmarks with known target densities and on real-world datasets. Section 5 concludes the paper with a summary of our findings and a discussion of future directions.

\section{Nonlocal Wasserstein Geometry and Hamiltonian Dynamics}\label{sec:NLW}

We first introduce the nonlocal Wasserstein geometry and then use it to formulate the damped Hamiltonian dynamics for the
density--momentum pair \((\rho,\Phi)\). 
The formulation combines the nonlocal transport geometry of \cite{erbar2014gradient,esposito2019nonlocal} with the Hamiltonian structure of optimal transport \cite{wang2022accelerated}.

\subsection{ Preliminaries}
Let \(\Omega \subset \mathbb{R}^d\) be equipped with the Lebesgue
measure \(\mathrm{m}\), and denote by \(\mathcal{P}(\Omega)\)
the space of Borel probability measures. The energy is the Kullback–Leibler divergence
\begin{equation}
E(\rho) = D_{KL}(\rho \| \pi) = \int_{\Omega} \rho(x) \log \frac{\rho(x)}{\pi(x)}\mathrm{d}x\,,
\label{eq:KL}
\end{equation}
where \(\pi \propto e^{-f}\) is the target distribution.

Let \(J\) be a symmetric jump kernel in the edge space
\(\mathrm{G}=\{(x,y)\in\mathbb{R}^d\times\mathbb{R}^d: x\neq y\}\) with respect to
the reference measure \(\mathrm{m}\), satisfying
\[J(x,\mathrm{d}y)\mathrm{m}(\mathrm{d}x)
=J(y,\mathrm{d}x)\mathrm{m}(\mathrm{d}y),
    \quad x\neq y,\]
and let \(J\mathrm{m}\) denote the associated product measure
\(J\mathrm{m}(\mathrm{d}x,\mathrm{d}y) = J(x,\mathrm{d}y)\mathrm{m}(\mathrm{d}x).
\)
Let \(\theta: \mathbb{R}_+ \times \mathbb{R}_+ \to \mathbb{R}_+\) be an interpolation function satisfying the assumptions in \cite{erbar2014gradient}, 
e.g., the logarithmic mean \(\theta(s,t) = \frac{s-t}{\log s - \log t}\). The edge density is defined as \(\hat{\rho}(x,y) = \theta(\rho(x),\rho(y))\).

For a test function \(\phi:\mathbb{R}^d\to\mathbb{R}\), the nonlocal gradient is defined on pairs \((x,y)\in\mathbb{R}^d\times\mathbb{R}^d\) as
\begin{equation}\label{eq:nonlocal gradient}
\overline{\nabla}\phi(x,y) = \phi(y) - \phi(x).
\end{equation}

The evolution of a probability density \(\rho_t\) subject to nonlocal transport is governed by the continuity equation
\begin{equation}
\partial_t \rho_t + \overline{\nabla}\cdot (\hat{\rho}_t \, \overline{\nabla}\phi_t) = 0,
\label{eq:nonlocal_ce}
\end{equation}
where \(\hat{\rho}_t(x,y) = \theta(\rho_t(x),\rho_t(y))\) and \(\phi_t\) is a potential function.  
In Erbar's general framework, the action functional is defined for pairs \((\mu,\nu)\) of measures on \(\mathbb{R}^d\) and \(\mathrm{G}\), respectively. In our setting, where \(\mu=\rho \mathrm{m}\) is absolutely continuous and the flux takes the gradient form \(\nu=\hat{\rho}\,\overline{\nabla}\phi\,J\mathrm{m}\), this action functional reduces to
\begin{equation*}\label{eq:action function}
\mathcal{A}(\rho,\phi)=\frac12\int_G |\overline{\nabla}\phi(x,y)|^2\,\hat{\rho}(x,y)\,J \mathrm{m}(\mathrm{d}x,\mathrm{d}y),
\end{equation*}
which matches the simplified form established in \cite[Lemma 2.3 and Proposition 5.14]{erbar2014gradient}.

The nonlocal Wasserstein distance \(W_{\mathrm{NL}}\) is defined as
\begin{equation}\label{eq:non-local W metric}
W_{\mathrm{NL}}^2(\rho_0,\rho_1) = \inf_{(\rho,\phi)\in \mathcal{CE}_1(\rho_0,\rho_1)} \int_0^1 \mathcal{A}(\rho_t,\phi_t) \, \mathrm{d}t,
\end{equation}
where \(\mathcal{CE}_1(\rho_0,\rho_1)\) denotes the set of pairs \((\rho,\phi)\) satisfying the continuity equation (\ref{eq:nonlocal_ce}) with endpoints \(\rho(0)=\rho_0,\rho(1)=\rho_1\), along with the integrability condition \(\int_0^1\mathcal{A}(\rho_t,\phi_t)\,\mathrm{d}t<\infty\).

\subsection{Hamiltonian formulation and damped dynamics}
Following the nonlocal Wasserstein geometry of Erbar~\cite{erbar2014gradient}, the tangent space $T_\rho\mathcal{P}$ at $\rho$ is identified with fluxes on the edge space $\mathrm{G}$, while the cotangent space $T_\rho^*\mathcal{P}$ is identified with functions $\Phi:\mathbb{R}^d\to\mathbb{R}$. Under the assumptions in Erbar’s framework, the metric tensor
 \(G_\rho^{-1}\) defines the inverse metric on the zero-mass tangent space.
\begin{equation}
G_\rho^{-1}: T_\rho^*\mathcal{P} \to T_\rho\mathcal{P},
\qquad
G_\rho^{-1}[\Phi](x) = \int_{\Omega} \bigl( \Phi(x) - \Phi(y) \bigr) \, \hat{\rho}(x,y) \, J(x,\mathrm{d}y).
\label{eq:inversemetric}
\end{equation}
The metric inner product on $T_\rho\mathcal{P}$ is then defined as
\begin{equation}
g_\rho(\sigma_1,\sigma_2) = \int_{\Omega} \sigma_1(x)\,\Phi_2(x)\,\mathrm{m}(\mathrm{d}x),
\qquad \sigma_i = G_\rho^{-1}[\Phi_i], \ i=1,2,
\label{eq:innerproduct}
\end{equation}
or equivalently,
\(g_\rho(\sigma_1,\sigma_2) = \frac12\int_G \overline{\nabla}\Phi_1\,\overline{\nabla}\Phi_2\,\hat{\rho}\,J\mathrm{m}\).

The Hamiltonian is defined as the sum of kinetic and potential energy:
\begin{equation}
\mathcal{H}(\rho,\Phi) = \frac12 \, g_\rho(\sigma,\sigma) + E(\rho), \qquad \sigma = G_\rho^{-1}[\Phi].
\label{eq:Hamilton_def}
\end{equation}
Substituting (\ref{eq:inversemetric}) into (\ref{eq:Hamilton_def}) and utilizing symmetry yields the explicit symmetric form:
\begin{equation}
\mathcal{H}(\rho,\Phi) = \frac14 \iint \bigl( \Phi(x) - \Phi(y) \bigr)^2 \, \hat{\rho}(x,y) \, J(x,\mathrm{d}y)\mathrm{m}(\mathrm{d}x) + E(\rho).
\label{eq:Hamilton_sym}
\end{equation}

The undamped Hamiltonian equations are $\partial_t \rho = \frac{\delta \mathcal{H}}{\delta \Phi}$ and $\partial_t \Phi = -\frac{\delta \mathcal{H}}{\delta \rho}$. Combined with (\ref{eq:Hamilton_sym}), this yields the following explicit system.
\begin{proposition}[Hamiltonian dynamics]
\label{prop:hamiltonian}
The nonlocal Hamiltonian system for $(\rho,\Phi)$ is described by the following equations:
\begin{align}
\partial_t \rho_t(x) &= \int \bigl( \Phi_t(x) - \Phi_t(y) \bigr) \, \hat{\rho}_t(x,y) \, J(x,\mathrm{d}y), \label{eq:state}\\
\partial_t \Phi_t(x) &= -\frac12 \int \bigl( \Phi_t(x) - \Phi_t(y) \bigr)^2 \, \partial_1 \theta(\rho_t(x),\rho_t(y)) \, J(x,\mathrm{d}y) - \frac{\delta E}{\delta \rho_t}(x), \label{eq:momentum}
\end{align}
where $\partial_1\theta$ denotes the partial derivative with respect to the first argument, and $\frac{\delta E}{\delta\rho}$ is the first variation of (\ref{eq:KL}):
\begin{equation}
\frac{\delta E}{\delta \rho}(x) = \log\frac{\rho(x)}{\pi(x)} + 1.
\label{eq:variation_E}
\end{equation}
\end{proposition}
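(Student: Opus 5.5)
The plan is to compute the two first variations of the symmetric Hamiltonian (\ref{eq:Hamilton_sym}) directly and substitute them into $\partial_t\rho=\frac{\delta\mathcal H}{\delta\Phi}$ and $\partial_t\Phi=-\frac{\delta\mathcal H}{\delta\rho}$. Throughout, the $L^2(\mathrm m)$ pairing identifies first variations with functions. Two facts get used repeatedly:
\begin{itemize}
\item the symmetry $J(x,\mathrm dy)\mathrm m(\mathrm dx)=J(y,\mathrm dx)\mathrm m(\mathrm dy)$ of $J\mathrm m$;
\item the symmetry $\theta(s,t)=\theta(t,s)$ of the interpolation function (true for the logarithmic mean), which gives $\hat\rho(x,y)=\hat\rho(y,x)$ and $\partial_2\theta(s,t)=\partial_1\theta(t,s)$.
\end{itemize}
I will work formally: $\rho$ is positive and all integrals are finite, so that variations can be interchanged with integration.

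\textbf{Step 1 (variation in $\Phi$).} Perturb $\Phi\mapsto\Phi+\varepsilon\psi$. Since $E$ does not depend on $\Phi$, the derivative at $\varepsilon=0$ is
\[
\tfrac12\iint(\Phi(x)-\Phi(y))(\psi(x)-\psi(y))\,\hat\rho(x,y)\,J(x,\mathrm dy)\mathrm m(\mathrm dx).
\]
Split this into the $\psi(x)$ term and the $\psi(y)$ term. In the $\psi(y)$ term, exchange $x\leftrightarrow y$ using the symmetry of $J\mathrm m$ and of $\hat\rho$; it then equals the $\psi(x)$ term. The derivative becomes
\[
\int\psi(x)\Bigl[\int(\Phi(x)-\Phi(y))\hat\rho(x,y)J(x,\mathrm dy)\Bigr]\mathrm m(\mathrm dx).
\]
Hence $\frac{\delta\mathcal H}{\delta\Phi}=G_\rho^{-1}[\Phi]$ as in (\ref{eq:inversemetric}), which gives (\ref{eq:state}).

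\textbf{Step 2 (variation in $\rho$).} Perturb $\rho\mapsto\rho+\varepsilon\eta$. The kinetic term changes through $\hat\rho$, whose derivative is
\[
\partial_1\theta(\rho(x),\rho(y))\,\eta(x)+\partial_2\theta(\rho(x),\rho(y))\,\eta(y).
\]
The $\eta(y)$ contribution is handled as in Step 1: swap $x\leftrightarrow y$ using the symmetry of $J\mathrm m$ and $\partial_2\theta(\rho(x),\rho(y))=\partial_1\theta(\rho(y),\rho(x))$. It then coincides with the $\eta(x)$ contribution, because $(\Phi(x)-\Phi(y))^2$ is symmetric. The factor $\tfrac14$ therefore becomes $\tfrac12$, giving the first term of (\ref{eq:momentum}).

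For the potential term, differentiate $\int(\rho+\varepsilon\eta)\log\frac{\rho+\varepsilon\eta}{\pi}$ to get $\int\eta\,(\log\frac{\rho}{\pi}+1)$, which is (\ref{eq:variation_E}). Inserting the minus sign from $\partial_t\Phi=-\frac{\delta\mathcal H}{\delta\rho}$ yields (\ref{eq:momentum}).

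\textbf{Main obstacle.} The main difficulty is not the algebra but justifying the symmetrization and the differentiation under the integral. The kernel $J$ may be singular (L\'evy-type near the diagonal), and for the logarithmic mean $\partial_1\theta$ blows up where $\rho$ vanishes.

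I would first restrict to $\rho$ bounded away from $0$ and to bounded Lipschitz $\Phi,\psi,\eta$. Under these restrictions:
\begin{itemize}
\item $|\Phi(x)-\Phi(y)|^2$ integrates against $J$ under the standard moment condition $\int(1\wedge|x-y|^2)J(x,\mathrm dy)<\infty$, uniformly in $x$;
\item $\partial_i\theta$ is bounded on compact subsets of $(0,\infty)^2$.
\end{itemize}
Dominated convergence then justifies the interchange. The general identities are understood in this weak sense, consistent with Erbar's framework.
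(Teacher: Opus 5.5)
Your proposal is correct and takes essentially the same route as the paper's proof in Appendix~\ref{app:ham}: you compute the first variations of the symmetric Hamiltonian in $\Phi$ and in $\rho$, and you fold the $\psi(y)$ and $\eta(y)$ contributions back onto the $x$-terms using the symmetry of $J\mathrm{m}$ and $\partial_2\theta(r,s)=\partial_1\theta(s,r)$. You then substitute the results into Hamilton's equations. You additionally state explicitly that the $\Phi$-variation needs $\hat\rho(x,y)=\hat\rho(y,x)$, and you sketch how to justify differentiating under the integral; the paper leaves both of these points implicit.
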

\begin{proof}
    See Appendix~\ref{app:ham}.
\end{proof}

The undamped system conserves the Hamiltonian and therefore does not
relax to equilibrium. 
To introduce dissipation while preserving the Hamiltonian structure, we add a linear damping term \(\gamma_t \Phi_t\) to the momentum equation (\ref{eq:momentum}). This yields the following damped nonlocal Hamiltonian system.

\begin{proposition}
The damped nonlocal Hamiltonian system takes the explicit form
\begin{equation}
\begin{cases}\label{eq:NLW-AIG}
\partial_t \rho_t(x) = \displaystyle\int \bigl( \Phi_t(x) - \Phi_t(y) \bigr) \, \hat{\rho}_t(x,y) \, J(x,\mathrm{d}y), \\[12pt]
\partial_t \Phi_t(x) + \gamma_t \Phi_t(x) = -\dfrac12 \displaystyle\int \bigl( \Phi_t(x) - \Phi_t(y) \bigr)^2 \, \partial_1\theta(\rho_t(x),\rho_t(y)) \, J(x,\mathrm{d}y) - \dfrac{\delta E}{\delta \rho_t}(x),
\end{cases}
\end{equation}
where $\hat{\rho}_t(x,y)=\theta(\rho_t(x),\rho_t(y))$, $\partial_1\theta$ denotes the partial derivative with respect to the first argument, \(J(x,\mathrm{d}y)\) is the jump measure from \(x\), and $\frac{\delta E}{\delta\rho}$ is the first variation of the energy.
\end{proposition}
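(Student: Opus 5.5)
This proposition is essentially a corollary of Proposition~\ref{prop:hamiltonian}, so the plan is to derive the damped system by modifying only the momentum equation and verifying that the explicit form is unchanged otherwise. Concretely, I would define the damped dynamics as
\[
\partial_t \rho_t = \frac{\delta \mathcal{H}}{\delta \Phi}(\rho_t,\Phi_t), \qquad \partial_t \Phi_t + \gamma_t \Phi_t = -\frac{\delta \mathcal{H}}{\delta \rho}(\rho_t,\Phi_t),
\]
and then compute the two variational derivatives of the symmetric Hamiltonian (\ref{eq:Hamilton_sym}) exactly as in the proof of Proposition~\ref{prop:hamiltonian}. Since the damping term $\gamma_t\Phi_t$ does not depend on $\rho$ and is not produced by any variation, the right-hand sides are identical to (\ref{eq:state})--(\ref{eq:momentum}), which gives (\ref{eq:NLW-AIG}) directly.

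For the first equation, I would perturb $\Phi \mapsto \Phi + \varepsilon h$ in (\ref{eq:Hamilton_sym}). Differentiating at $\varepsilon=0$ gives $\frac12\iint (\Phi(x)-\Phi(y))(h(x)-h(y))\hat\rho\,J(x,\mathrm{d}y)\mathrm{m}(\mathrm{d}x)$. Using the symmetry $J(x,\mathrm{d}y)\mathrm{m}(\mathrm{d}x)=J(y,\mathrm{d}x)\mathrm{m}(\mathrm{d}y)$ together with $\hat\rho(x,y)=\hat\rho(y,x)$, the $h(y)$ term equals minus the $h(x)$ term. This doubling cancels the $\frac12$, and the result reads off $\int(\Phi(x)-\Phi(y))\hat\rho(x,y)J(x,\mathrm{d}y)$, which is $G_\rho^{-1}[\Phi]$ in (\ref{eq:inversemetric}).

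For the second equation, I would perturb $\rho\mapsto\rho+\varepsilon h$. The kinetic term then yields $\frac14\iint(\Phi(x)-\Phi(y))^2\big(\partial_1\theta\, h(x)+\partial_2\theta\, h(y)\big)J\mathrm{m}$. Swapping $x\leftrightarrow y$ in the second piece via the symmetry of $J\mathrm{m}$, and using $\partial_2\theta(s,t)=\partial_1\theta(t,s)$ (which follows from the symmetry of $\theta$), the two pieces combine into $\frac12\int(\Phi(x)-\Phi(y))^2\partial_1\theta(\rho(x),\rho(y))J(x,\mathrm{d}y)$. The KL term contributes $\log(\rho/\pi)+1$ as in (\ref{eq:variation_E}). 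Inserting both into the damped momentum equation yields the second line of (\ref{eq:NLW-AIG}).

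The main obstacle is justifying the differentiation under the integral and the exchange of variables. This requires integrability of $(\Phi(x)-\Phi(y))^2\hat\rho$ and $(\Phi(x)-\Phi(y))^2\partial_1\theta$ against $J\mathrm{m}$. It also requires care near $\rho=0$, where $\partial_1\theta$ of the logarithmic mean is singular. I would handle this by working under Erbar's standing assumptions, restricting to positive, bounded $\rho$ and to $\Phi$ for which the action is finite, and using dominated convergence. Any remaining technical issue is thus inherited from Proposition~\ref{prop:hamiltonian} rather than introduced by the damping.
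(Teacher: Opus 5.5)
Your proposal is correct and follows the paper's route. The paper obtains the damped system by appending $\gamma_t\Phi_t$ to the momentum equation of Proposition~\ref{prop:hamiltonian}, whose right-hand sides are derived in Appendix~\ref{app:ham} by the variational computations you outline: symmetrizing with $J\mathrm{m}$ and using $\partial_2\theta(s,t)=\partial_1\theta(t,s)$. Your remarks on integrability and positivity of $\rho$ go slightly beyond the paper, which carries out these computations formally.
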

We refer to the damped nonlocal Hamiltonian system \eqref{eq:NLW-AIG} as the
nonlocal Wasserstein Hamiltonian flow. In the overdamped limit
\(\gamma\to\infty\), it reduces to the first-order nonlocal Wasserstein
gradient flow. 

\subsection{Graph discretization}\label{sec:graph_discretization}
Our graph construction is related to asymptotically compatible
discretizations of nonlocal operators~\cite{Tian2016AsymptoticallyCS},
where the interaction radius and the grid size are jointly refined. For a systematic treatment of nonlocal modeling and the numerical
discretization of nonlocal operators, we refer to~\cite{3360184}.
We now discretize the continuous dynamics on a graph \(\mathcal{G}=(\mathcal{V},\mathcal{E})\), where \(\mathcal{V}=\{1,\ldots,N\}\) is the node set and \(\mathcal{E}\) is the undirected edge set with symmetric weights \(\nu_{ij}=\nu_{ji}\ge 0\). For each node \(i\in\mathcal{V}\), 
let \(\rho_i(t)>0\) be the probability mass at node $i$ satisfying \(\sum_i\rho_i=1\), and
\(\rho=(\rho_1,\ldots,\rho_N)^\top\in\mathbb{R}^N\).
The target probability mass is \(\pi_i>0\) on \(\mathcal{V}\), and the free energy is
\[
E(\rho)=\sum_i\rho_i\log\frac{\rho_i}{\pi_i},\qquad 
G_i:=\frac{\delta E}{\delta\rho_i}=\log\frac{\rho_i}{\pi_i}+1. \label{eq:free_energy}
\]

To express the dynamics in terms of edge variables, we introduce \(u_{ij}:=\Phi_j-\Phi_i\) on each edge. This leads to the following graph-based formulation.

\begin{proposition}
\label{prop:velocity_dynamics}
Let \((\rho_i, \Phi_i)\) be a solution of the damped nonlocal Hamiltonian system (\ref{eq:NLW-AIG}). 
Then the coupled system for \((\rho_i, u_{ij})\) evolves as
\begin{equation}
\left\{
\begin{aligned}
\partial_t\rho_i &= -\sum_j \theta_{ij}\,u_{ij}\,\nu_{ij},\\[6pt]
\partial_t u_{ij} &= (G_i-G_j)-\gamma_t u_{ij}+\mathcal{C}_{ij}(\rho,u),
\end{aligned}
\right. \label{eq:velocity_dynamics}
\end{equation}
where the nonlocal convection term is defined as
\begin{equation}
\label{eq:convection}
\mathcal{C}_{ij}(\rho,u):=\frac12\left(\sum_k\partial_1\theta_{ik}\,u_{ik}^2\,\nu_{ik}
-\sum_k\partial_1\theta_{jk}\,u_{jk}^2\,\nu_{jk}\right).
\end{equation}
\end{proposition}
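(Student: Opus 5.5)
The plan is to first write the graph analogue of the system \eqref{eq:NLW-AIG} and then change variables to edge differences. Replacing \(J(x,\mathrm{d}y)\,\mathrm{m}(\mathrm{d}x)\) by the symmetric edge weights \(\nu_{ij}\), and the integrals by sums over neighbours \(j\), the discrete damped system reads
\begin{equation*}
\partial_t\rho_i=\sum_j(\Phi_i-\Phi_j)\,\theta_{ij}\,\nu_{ij},\qquad
\partial_t\Phi_i+\gamma_t\Phi_i=-\tfrac12\sum_k(\Phi_i-\Phi_k)^2\,\partial_1\theta_{ik}\,\nu_{ik}-G_i,
\end{equation*}
with \(\theta_{ij}=\theta(\rho_i,\rho_j)\) and \(\partial_1\theta_{ik}=\partial_1\theta(\rho_i,\rho_k)\). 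I take this to be the meaning of ``solution of \eqref{eq:NLW-AIG}'' at the discrete level. It is also what one obtains by applying Proposition~\ref{prop:hamiltonian} to the graph Hamiltonian
\[
\mathcal H(\rho,\Phi)=\tfrac14\sum_{i,j}(\Phi_i-\Phi_j)^2\,\theta_{ij}\,\nu_{ij}+E(\rho).
\]
When differentiating this Hamiltonian in \(\rho_i\), both \(\partial_1\theta\) and \(\partial_2\theta\) contribute. The symmetry of \(\theta\) and of \(\nu_{ij}\) converts the \(\partial_2\) terms into \(\partial_1\) terms, which is why the factor \(\tfrac14\) becomes \(\tfrac12\).

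The density equation then follows immediately. Since \(u_{ij}=\Phi_j-\Phi_i\), we have \(\Phi_i-\Phi_j=-u_{ij}\), so
\[
\partial_t\rho_i=-\sum_j\theta_{ij}\,u_{ij}\,\nu_{ij}.
\]
Mass conservation also follows: \(\theta_{ij}\nu_{ij}\) is symmetric and \(u_{ij}\) is antisymmetric, so \(\sum_i\partial_t\rho_i=0\).

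For the momentum equation, I differentiate \(u_{ij}=\Phi_j-\Phi_i\) in time and subtract the \(\Phi_i\) equation from the \(\Phi_j\) equation. The damping terms combine linearly into \(-\gamma_t(\Phi_j-\Phi_i)=-\gamma_t u_{ij}\). The potential terms give \(-(G_j-G_i)=G_i-G_j\). In the quadratic terms I rewrite \((\Phi_i-\Phi_k)^2=u_{ik}^2\) and \((\Phi_j-\Phi_k)^2=u_{jk}^2\), which yields
\[
-\tfrac12\sum_k\partial_1\theta_{jk}\,u_{jk}^2\,\nu_{jk}+\tfrac12\sum_k\partial_1\theta_{ik}\,u_{ik}^2\,\nu_{ik}.
\]
This is exactly \(\mathcal C_{ij}(\rho,u)\) as defined in \eqref{eq:convection}. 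Collecting the three contributions gives the second line of \eqref{eq:velocity_dynamics}.

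The main obstacle is conceptual rather than computational. The \(u\)-system only involves \(\Phi\) through its differences, and \(u\) is a consistent (gradient-type) edge field. The statement therefore concerns \(u\) obtained from a solution \(\Phi\), not arbitrary edge fields. I would remark that constants added to \(\Phi\) are invisible in both equations, although under \(\gamma_t\) damping a constant mode decays separately and does not affect \(u\). The sign conventions also need care: the continuous system writes \(\Phi(x)-\Phi(y)\), while \(u_{ij}\) is defined as \(\Phi_j-\Phi_i\). I would check the conventions once more against the \(\rho\)-equation, which should transport mass toward nodes of lower \(G\) at stationarity.
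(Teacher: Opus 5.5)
Your derivation is correct and is essentially the computation the paper intends; the paper states this proposition without a separate proof. You write the graph version of \eqref{eq:NLW-AIG} with $\theta_{ij}\nu_{ij}$ in place of $\hat{\rho}\,J$, substitute $u_{ij}=\Phi_j-\Phi_i$ in the density equation, and subtract the $\Phi_i$-equation from the $\Phi_j$-equation, so that the damping, potential and quadratic terms yield $-\gamma_t u_{ij}$, $G_i-G_j$ and $\mathcal{C}_{ij}(\rho,u)$, in agreement with the variational computation (including the $\partial_2\theta\to\partial_1\theta$ symmetrization) in Appendix~\ref{app:ham}.
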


The antisymmetric edge flux is \(m_{ij}:=\theta_{ij}u_{ij}\nu_{ij}\), and \(u_{ij}=\Phi_j-\Phi_i\) is the edge potential difference. The first equation enforces mass conservation, while the second governs the edge dynamics under damping and nonlocal convection.

The velocity formulation (\ref{eq:velocity_dynamics}) can be viewed as a discretization of the continuous PDE on a Riemannian manifold \(\mathcal{M}\):
$\partial_t\rho=-\overline{\nabla}\cdot(\rho u), 
\partial_t u=-\overline{\nabla}\log\frac{\rho}{\pi}-\gamma u- (u \cdot\overline{\nabla} )u$.

\begin{remark} In the overdamped limit $\gamma\to\infty$, the velocity 
equation is dominated by the damping term $-\gamma u_{ij}$. Balancing 
with the driving force $(G_i-G_j)$ gives $u_{ij} = \gamma^{-1}(G_i-G_j) + O(\gamma^{-2})$. 
Substituting into the density equation and rescaling time by 
$\tilde t = \frac t \gamma$ yields the first-order nonlocal Wasserstein 
gradient flow:
\begin{equation}\label{eQ:first_NLW}
\partial_{\tilde t}\rho_i = -\sum_j \theta_{ij}(G_i-G_j)\nu_{ij}. 
\end{equation}
\end{remark}

The proposed damped nonlocal Hamiltonian system is the natural extension of the accelerated information gradient flow (AIG) \cite{wang2022accelerated} to the nonlocal Wasserstein geometry. A closely related acceleration mechanism also appears in the Stein accelerated flow \cite{stein2025accelerated}, where the interaction is mediated by a smooth kernel \(K\).

\section{Convergence analysis}
\label{sec:convergence}
The convergence of sampling algorithms in non-convex settings has been extensively studied in recent years. Non-asymptotic bounds for Langevin-type methods under non-convex or weakly convex conditions are now available in a series of works \cite{pang2025projected,li2022sharp, altschuler2026shifted}.
In this section, we analyze the method through continuous-time mixing, temporal discretization, and sparse-graph approximation.

\subsection{Continuous-time mixing}
\label{subsec:mixing}

We first study the long-time behavior of the continuous-time system 
(\ref{eq:velocity_dynamics}). The kinetic energy, relative entropy, and total Hamiltonian are respectively defined as
\begin{align}
\mathcal{K} = \frac14\sum_{i,j} m_{ij}u_{ij}
=\frac12\sum_{\{i,j\}\in\mathcal E}\theta_{ij}u_{ij}^2\nu_{ij}, \quad
{E}(\rho) = \sum_i \rho_i \log\frac{\rho_i}{\pi_i}, \quad
{H}= {E}(\rho) + \mathcal{K}.
\end{align}
Here $\mathcal E$ denotes the realized undirected sparse edge set.
A direct computation using (\ref{eq:velocity_dynamics}) yields the 
energy–dissipation identity
\begin{equation}
\frac{d}{dt}{H} =-2\gamma\,\mathcal{K}\le 0,
\label{eq:ham_diss}
\end{equation}
thus ${H}$ is non-increasing, where $\gamma >0 $ is a constant. Recall the measure \(J\mathrm{m}\) on \(\mathrm{G}\) defined in Section~2. For a probability density \(\rho\), we define a discrete analog of the nonlocal Fisher information
\[
\mathcal{I}(\rho\|\pi) := \frac12 \sum_{i,j} \theta_{ij}(\rho)(G_i-G_j)^2 \nu_{ij},
\]
which follows the Erbar--Maas framework.
The energy-dissipation identity (\ref{eq:ham_diss}) establishes monotonicity of \(H\) along the dynamics, but does not quantify the rate of convergence. To this end, we perform a linearized analysis of the velocity dynamics (\ref{eq:velocity_dynamics}) in a neighborhood of the equilibrium \((\pi,0)\).

\subsubsection{Linearization around equilibrium}
\label{sec:linearization}
The velocity dynamics (\ref{eq:velocity_dynamics}) admits the equilibrium \((\rho,u)=(\pi,0)\).
On the discrete graph \(\mathcal{G}=(\mathcal{V},\mathcal{E})\), we consider a neighborhood
\[
    \mathcal U_\delta
    :=
    \left\{ \rho\in\mathbb{R}^{N}: \rho_i=\pi_i(1+\varphi_i), 
\sum_{i=1}^{N}\rho_i=1, \|\varphi\|_{\ell^2(\pi)}<\delta
    \right\},
\]
where \(
\|\varphi\|_{\ell^2(\pi)}^2
=\sum_i \pi_i\varphi_i^2.
\)
For $\rho \in \mathcal U_\delta$, we introduce the relative perturbation $\varphi_i>-1$ defined by
\(
\rho_i = \pi_i(1+\varphi_i),\quad i=1,\ldots,N,
\)
with the zero-mean condition
\(
\sum_{i=1}^N \pi_i \varphi_i = 0,
\)
where $\|\varphi\|_{\ell^2(\pi)} \ll 1$ (equivalently, $\|\varphi\|_{\ell^2(\pi)} \lesssim \delta$). 

We also use the equivalent norm
\(
    \|\rho-\pi\|_{\pi^{-1}}^2
    :=
    \sum_i\frac{(\rho_i-\pi_i)^2}{\pi_i}
    =
    \|\varphi\|_{\ell^2(\pi)}^2,
\)
where the equality follows from \(\rho_i-\pi_i=\pi_i\varphi_i\).

As shown in Appendix~\ref{app:local_convergence}, the first-order expansions read
\[
G_i-G_j=\varphi_i-\varphi_j+O(\varphi^2),\quad
\theta_{ij}(\rho)=\theta_{ij}^0+O(\varphi),\quad
\mathcal C_{ij}(\rho,u)=O(\|u\|^2), 
\]
where $\theta_{ij}^0:=\theta(\pi_i,\pi_j)$ denotes the logarithmic mean evaluated at the equilibrium densities.
Retaining only first-order terms, we obtain the following linearized system
\begin{equation}\label{eq:first_order_linearization}
\left\{
\begin{aligned}
\partial_t\varphi_i &= -\frac{1}{\pi_i}\sum_j\theta_{ij}^0 u_{ij}\nu_{ij},\\
\partial_t u_{ij} &= (\varphi_i-\varphi_j)-\gamma u_{ij}.
\end{aligned}
\right.
\end{equation}
Eliminating $u$ (see Appendix~\ref{app:local_convergence}) yields the second-order equation
\(
    \partial_{tt}\varphi+\gamma\partial_t\varphi+A\varphi=0,
\)
where \(A\) has the explicit form
\begin{equation}\label{eq:pi_weighted_nonlocal_graph_A}
    (A\varphi)_i
    =
    \frac{1}{\pi_i}\sum_j\theta_{ij}^0(\varphi_i-\varphi_j)\nu_{ij}.
\end{equation}
The operator $A$ defined in (\ref{eq:pi_weighted_nonlocal_graph_A}) is self-adjoint in the weighted inner product
\(
\langle \phi, \psi \rangle_\pi := \sum_i \pi_i \phi_i \psi_i,
\)
and its spectrum consists of
$0=\lambda_0<\lambda_1\le\cdots\le\lambda_{\max}$.
The spectral gap $\lambda_1>0$ controls the local convergence rate.
The linearized operator \(A\) is related to the nonlocal Fisher information
\begin{equation}
\begin{aligned}\label{eq:fisher_expansion}
\mathcal I(\rho\|\pi)
= \frac12 \sum_{i,j} \theta_{ij}^0 (\varphi_i-\varphi_j)^2 \nu_{ij}
+ O(\|\varphi\|^3)
= \langle A\varphi, \varphi\rangle_\pi + O(\|\varphi\|_\pi^3),
\end{aligned}
\end{equation}
see Appendix~\ref{app:local_convergence} for the proof of self-adjointness and the deduction of Fisher information.

The linearized system has the same algebraic structure as the classical
damped Heavy Ball system~\cite{Chen2023AcceleratingOO}, but with the nonlocal graph Laplacian as the
stiffness operator. We present the modal analysis of this second-order system in the next section, which also yields the optimal damping and the
local convergence rates determined by the nonlocal graph spectrum.

We now introduce the spectral gap condition.

\begin{assumption}[Local spectral gap]
\label{ass:local_spectral_gap}
Let $A$ be the linearized nonlocal graph Laplacian defined in
(\ref{eq:pi_weighted_nonlocal_graph_A}). Assume that there exist constants
$\lambda_1>0$ and $\delta>0$ such that, for every perturbation
$\varphi$ satisfying
\[
\rho_i=\pi_i(1+\varphi_i)>0,\qquad
\|\varphi\|_{\ell^2(\pi)}\le \delta,
\qquad
\sum_{i\in V}\pi_i\varphi_i=0,
\]
one has
\(
\langle A\varphi,\varphi\rangle_\pi
\ge
\lambda_1\|\varphi\|_{\ell^2(\pi)}^2.
\)
\end{assumption}

\begin{remark}
Under the relative perturbation \(\rho_i=\pi_i(1+\varphi_i)\), the relative entropy satisfies
\begin{equation}\label{eq:entropy_linear}
E(\rho) = \frac12 \|\varphi\|_\pi^2 + O(\|\varphi\|_\pi^3),
\end{equation}
and the Fisher information admits the expansion~(\ref{eq:fisher_expansion}).
Thus, Assumption~\ref{ass:local_spectral_gap} provides a lower bound
for the quadratic term in the Fisher information expansion
near the equilibrium $\pi$.

\end{remark}

For the second-order dynamics, we use the phase-space Lyapunov functional
\begin{equation}
    \mathcal L(\rho,u)
    :=
    E(\rho)
    +
    K(\rho,u),
    \qquad
    K(\rho,u)
    :=
    \frac14
    \sum_{i,j}
    \theta_{ij}u_{ij}^2\nu_{ij}.
    \label{eq:phase_space_lyapunov}
\end{equation}
At the equilibrium \((\pi,0)\), we have
\(
    \mathcal L(\pi,0)=0.
\) Our energy argument follows the Lyapunov-based approach used for preconditioned accelerated gradient methods \cite{2021Preconditioned}.

For the local nonlinear analysis, define the phase-space norm
\[
\|(\varphi,u)\|_{\mathcal X}^2
:=
\|\varphi\|_{\ell^2(\pi)}^2
+
\|u\|_{\mathcal U}^2,
\quad
\|u\|_{\mathcal U}^2
:=
\frac12\sum_{i,j}\theta_{ij}^0u_{ij}^2\nu_{ij},
\]
and let
\(
\mathcal V_\delta
:=
\left\{
(\rho,u):
\rho_i=\pi_i(1+\varphi_i),\
\sum_i\pi_i\varphi_i=0,\
\|(\varphi,u)\|_{\mathcal X}<\delta
\right\}.
\)
\begin{assumption}[Local regularity]
	\label{ass:local_regular}
	On the bounded state region considered in the local analysis, there exist
	constants
	\[
	0<\rho_{\min}\le \rho_i\le \rho_{\max},\qquad
	0<\pi_{\min}\le \pi_i,
	\qquad
	\|u\|_{\infty}\le U_{\max},
	\]
	such that the logarithmic-mean mobility and its derivatives satisfy
	\[
	0<c_\theta\le \theta_{ij}(\rho)\le C_\theta,\qquad
	|\partial_1\theta_{ij}(\rho)|\le C_{\partial\theta},
	\qquad
	|\partial_{ab}^2\theta_{ij}(\rho)|\le C_{\theta,2}.
	\]
\end{assumption}

\begin{theorem}[Local exponential convergence]
	\label{thm:local_exp}
	Under Assumptions~\ref{ass:local_spectral_gap}
	and~\ref{ass:local_regular}, there exist
	\(\delta>0\) and \(\delta_0\in(0,\delta]\) such that,
	for every initial condition
	\(
	(\rho(0),u(0))\in\mathcal V_{\delta_0},
	\)
	the corresponding nonlinear solution exists globally, remains in
	\(\mathcal V_\delta\), and the phase-space Lyapunov functional satisfies
	\[
	\mathcal L(\rho(t),u(t))
	\le
	C_{\mathrm{ent}}(1+t)^2
	e^{-\lambda_c t}
	\mathcal L(\rho(0),u(0)),
	\]
	where \(\lambda_c=2r_1(\gamma)>0\).
	If \(\gamma\neq2\sqrt{\lambda_1}\), the polynomial factor can be omitted.
\end{theorem}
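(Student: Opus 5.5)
The proof runs in three stages: exact decay for the linearized oscillator, a perturbation estimate for the nonlinear terms, and a bootstrap that closes them together. Here $r_1(\gamma)$ is the slowest modal decay rate of $\ddot\varphi+\gamma\dot\varphi+A\varphi=0$ restricted to the mean-zero subspace:
\[
r_1(\gamma)=\min_{k\ge1}\Re\Bigl(\tfrac{\gamma}{2}-\sqrt{\gamma^2/4-\lambda_k}\Bigr),
\]
which equals $\frac{\gamma}{2}-\sqrt{\gamma^2/4-\lambda_1}$ when $\gamma\ge 2\sqrt{\lambda_1}$ and $\gamma/2$ otherwise.

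\textbf{Linear decay.} We first treat the linear system \eqref{eq:first_order_linearization} on the phase space $\mathcal X$, restricted to zero-mean $\varphi$ and to $u$ in the range of the discrete gradient. The $u$-component orthogonal to gradients decays separately at rate $\gamma$, and the linearized $\varphi$-equation only sees the gradient part of $u$. Assumption~\ref{ass:local_spectral_gap} lets us diagonalize $A$ in $\langle\cdot,\cdot\rangle_\pi$, which decouples the system into $2\times2$ blocks
\[
\begin{pmatrix}0&-\lambda_k\\ 1&-\gamma\end{pmatrix}
\]
in suitably normalized coordinates. The eigenvalues of each block are $-\frac\gamma2\pm\sqrt{\gamma^2/4-\lambda_k}$, so every block has real part at most $-r_1(\gamma)$. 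When $\gamma\ne 2\sqrt{\lambda_1}$, the slowest block is diagonalizable. Any other $\lambda_k=\gamma^2/4$ has decay rate $\gamma/2>r_1$, so it costs nothing beyond a constant. This gives $\|e^{tM}\|_{\mathcal X}\le Ce^{-r_1 t}$. In the critically damped case the slowest block is a Jordan block and we get $\|e^{tM}\|\le C(1+t)e^{-r_1t}$. Squaring produces the factor $(1+t)^2e^{-2r_1t}=(1+t)^2e^{-\lambda_c t}$.

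\textbf{Nonlinear remainder and equivalence with $\mathcal L$.} We write the full system \eqref{eq:velocity_dynamics} as $\dot z=Mz+\mathcal N(z)$ with $z=(\varphi,u)$. Using the expansions in Appendix~\ref{app:local_convergence} together with the bounds $C_\theta$, $C_{\partial\theta}$, $C_{\theta,2}$ of Assumption~\ref{ass:local_regular}, we show $\|\mathcal N(z)\|_{\mathcal X}\le C_N\|z\|_{\mathcal X}^2$ on $\mathcal V_\delta$. The remainder comes from three sources:
\begin{itemize}
\item the $O(\varphi^2)$ error in $G_i-G_j$, via $\log(1+\varphi)$ with $\varphi$ bounded below through $\rho_{\min}$;
\item the $O(\varphi)\,u$ error in $\theta_{ij}$;
\item the convection term $\mathcal C_{ij}=O(\|u\|^2)$.
\end{itemize}
All sums are finite, so norms are equivalent with graph-dependent constants. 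Separately, \eqref{eq:entropy_linear} and $\theta_{ij}=\theta^0_{ij}+O(\varphi)$ give $c\|z\|_{\mathcal X}^2\le\mathcal L\le C\|z\|_{\mathcal X}^2$ on $\mathcal V_\delta$ for small $\delta$.

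\textbf{Bootstrap.} This is the main obstacle. Plain Gronwall on Duhamel's formula $z(t)=e^{tM}z_0+\int_0^te^{(t-s)M}\mathcal N(z(s))\,ds$ loses a little of the rate. To avoid that, we use a weighted continuity argument on
\[
Y(T)=\sup_{t\le T}(1+t)^{-1}e^{r_1t}\|z(t)\|,
\]
dropping the $(1+t)^{-1}$ weight in the non-critical case. The nonlinear integral is then bounded by
\[
C C_N\, Y^2\int_0^t (1+t-s)(1+s)^2e^{-r_1(t-s)}e^{-2r_1 s}\,ds\le C'Y^2(1+t)e^{-r_1 t},
\]
since the extra factor $e^{-r_1 s}$ absorbs the polynomial growth. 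This yields $Y\le C\|z_0\|+C'Y^2$, so $Y\le 2C\|z_0\|$ whenever $\|z_0\|<\delta_0$ is small.

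Local existence holds because the vector field is smooth on $\mathcal V_\delta$ under Assumption~\ref{ass:local_regular}. The continuity argument then shows the solution never leaves $\mathcal V_\delta$, so it extends globally. Finally, converting via $\mathcal L\asymp\|z\|_{\mathcal X}^2$ gives the stated bound with $\lambda_c=2r_1(\gamma)$, and without the polynomial factor when $\gamma\ne 2\sqrt{\lambda_1}$.
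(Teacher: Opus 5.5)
Your proposal is correct and follows essentially the same route as the paper: a modal decomposition of the linearized oscillator $\ddot\varphi+\gamma\dot\varphi+A\varphi=0$, a quadratic bound on the Taylor remainder, a Duhamel bootstrap, and the local equivalence $\mathcal L\asymp\|(\varphi,u)\|_{\mathcal X}^2$. In the linear step the Jordan block at $\gamma=2\sqrt{\lambda_1}$ gives the $(1+t)$ factor and the non-gradient part of $u$ decays at rate $\gamma$; in the bootstrap the extra $e^{-r_1 s}$ absorbs the polynomial weight. Your weighted functional $Y(T)$ is a repackaging of the paper's bootstrap bound $\|z(t)\|_{\mathcal X}\le 2C_S q_\gamma(t)e^{-r_1(\gamma)t}\|z(0)\|_{\mathcal X}$ on $[0,T^*]$, and you additionally handle possible Jordan blocks with $\lambda_k=\gamma^2/4$, $k\ge2$, which the paper passes over quickly.
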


\begin{proof}
	See Appendix~\ref{app:local_convergence}.
\end{proof}
Related spectral-gap-based acceleration and optimal damping results
for $\chi^2$-driven graph dynamics were established by
Zhou et al.~\cite{zhou2025accelerated}.
For the present nonlinear logarithmic-mean dynamics, the linearization
around equilibrium yields the following damping and rate comparison.
\begin{corollary}[Optimal damping and comparison with first-order dynamics]
	\label{cor:acceleration}
	For the settling time
	\[
	T_{\mathrm{mix}}(\varepsilon)
	:=
	\inf\{t\ge0:R(s)\le\varepsilon \text{ for all }s\ge t\},
	\qquad
	R(t):=
	\frac{\|\rho_t-\pi\|_{\pi^{-1}}}
	{\|\rho_0-\pi\|_{\pi^{-1}}},
	\]
	the slowest-mode decay rate is maximized at
	\(
	\gamma^*=2\sqrt{\lambda_1},
	\qquad
	r_1(\gamma^*)=\sqrt{\lambda_1}.
	\)
	For a fixed graph and fixed initial data in the local regime, as
	\(\varepsilon\downarrow0\),
	\[
	T_{\mathrm{mix}}^{(2)}
	=
	O\!\left(\lambda_1^{-1/2}\log\frac1\varepsilon\right),
	\qquad
	T_{\mathrm{mix}}^{(1)}
	=
	O\!\left(\lambda_1^{-1}\log\frac1\varepsilon\right).
	\]
	Hence the second-order dynamics has the leading-order asymptotic
	advantage in the small-gap regime \(0<\lambda_1<1\).
\begin{proof}
	See Appendix~\ref{app:local_convergence}.
	\end{proof}

\end{corollary}

\begin{remark}
The corresponding first-order linearization is \(\dot{\varphi} + A \varphi = 0, \) 
whose slowest mode decays as \(
e^{-\lambda_1 t}\).
\end{remark}
A numerical illustration of this spectral picture is provided in
Section~\ref{subsubsec:mixing_time_verification}, where we compute the spectral
gap and the resulting linearized modal mixing-time predictions
for several values of \(\alpha\).

\begin{lemma}[Local equivalence]
\label{lem:wnl_l2}
Let \(A=BC\) be the linearized nonlocal graph Laplacian defined in
\eqref{eq:pi_weighted_nonlocal_graph_A}. For every $0<\eta<1$, after shrinking the neighborhood of $\pi$ if necessary,
\begin{equation}\label{ieq:W_l2_equiv}
\frac{1-\eta}{\sqrt{\lambda_{\max}}}
\|\rho-\pi\|_{\pi^{-1}}\le
W_{\mathrm{NL}}(\rho,\pi)\le \frac{1+\eta}{\sqrt{\lambda_1}} \|\rho-\pi\|_{\pi^{-1}}.\end{equation}
The spectral constants are exact for the tangent metric at equilibrium; the factors $1\pm\eta$ account for the variation of the mobility along a finite path,
where \(
0=\lambda_0<\lambda_1\le\cdots\le\lambda_{\max}<\infty\) are the eigenvalues of the operator $A$ restricted to the
zero-mass subspace.
\begin{proof}
    See Appendix~\ref{app:local_convergence}.
\end{proof}
\end{lemma}

\begin{corollary}[Wasserstein convergence rates]
\label{cor:WNL}
Under the assumptions of Theorem~\ref{thm:local_exp}, with zero initial
momentum $u(0)=0$, the following
convergence rate holds for the nonlocal Wasserstein distance
\(W_{\mathrm{NL}}\):
\[
W_{\mathrm{NL}}(\rho(t),\pi)
\le C_W (1+t)e^{-\lambda_c t/2}\,W_{\mathrm{NL}}(\rho(0),\pi),
\qquad t\ge0.
\]
where \(\lambda_c=2r_1(\gamma) > 0\) is the energy decay rate from
Theorem~\ref{thm:local_exp} and \(
    C_W
    =\frac{1+\eta}{1-\eta}
    \sqrt{\frac{\lambda_{\max}}{\lambda_1}}
    \sqrt{\frac{C_{\mathrm{ent}}c_{12}}{c_{11}}}>0.
\)
\end{corollary}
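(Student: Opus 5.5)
The proof chains three comparisons: from \(W_{\mathrm{NL}}\) at time \(t\) to the \(\pi^{-1}\)-norm (upper bound of Lemma~\ref{lem:wnl_l2}), from that norm to the Lyapunov functional \(\mathcal L\), through the decay estimate of Theorem~\ref{thm:local_exp}, and back at time \(0\) to the \(\pi^{-1}\)-norm and then to \(W_{\mathrm{NL}}\) (lower bound of Lemma~\ref{lem:wnl_l2}). The constant \(C_W\) in the statement already displays these four factors. The spectral ratio comes from the two sides of \eqref{ieq:W_l2_equiv}, and \(\sqrt{C_{\mathrm{ent}}c_{12}/c_{11}}\) comes from the energy bound together with a two-sided comparison of \(\mathcal L\) with \(\|\varphi\|_{\ell^2(\pi)}^2\).

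\textbf{Step 1: two-sided comparison of \(\mathcal L\).} Shrinking \(\delta\) if necessary, I will establish
\[
c_{11}\|\varphi\|_{\ell^2(\pi)}^2\le E(\rho)\le \mathcal L(\rho,u),
\qquad
\mathcal L(\rho,0)=E(\rho)\le c_{12}\|\varphi\|_{\ell^2(\pi)}^2 .
\]
\begin{itemize}
\item The first inequality uses \(K\ge0\).
\item The outer bounds on \(E\) follow from the entropy expansion \eqref{eq:entropy_linear}, \(E=\tfrac12\|\varphi\|^2_\pi+O(\|\varphi\|_\pi^3)\). On \(\mathcal V_\delta\) with \(\delta\) small this gives, for example, \(c_{11}=\tfrac12-C\delta\) and \(c_{12}=\tfrac12+C\delta\).
\item Alternatively, the elementary bound \(E(\rho)\le\chi^2(\rho\|\pi)=\|\varphi\|^2_\pi\) gives the upper constant globally.
\item The hypothesis \(u(0)=0\) is exactly what makes the initial Lyapunov value purely entropic. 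Without it, \(K(\rho(0),u(0))\) could not be controlled by \(W_{\mathrm{NL}}(\rho(0),\pi)\).
\end{itemize}

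\textbf{Step 2: chaining.} By Theorem~\ref{thm:local_exp} the trajectory stays in \(\mathcal V_\delta\), so Lemma~\ref{lem:wnl_l2} and Step 1 apply at all times. This gives
\[
W_{\mathrm{NL}}(\rho(t),\pi)\le\frac{1+\eta}{\sqrt{\lambda_1}}\|\varphi(t)\|_\pi
\le\frac{1+\eta}{\sqrt{\lambda_1}}\sqrt{\frac{\mathcal L(t)}{c_{11}}}
\le\frac{1+\eta}{\sqrt{\lambda_1}}\sqrt{\frac{C_{\mathrm{ent}}}{c_{11}}}\,(1+t)e^{-\lambda_c t/2}\sqrt{\mathcal L(0)} .
\]
For the initial value, Step 1 and the lower bound of Lemma~\ref{lem:wnl_l2} give
\[
\sqrt{\mathcal L(0)}\le\sqrt{c_{12}}\,\|\varphi(0)\|_\pi\le\sqrt{c_{12}}\,\frac{\sqrt{\lambda_{\max}}}{1-\eta}\,W_{\mathrm{NL}}(\rho(0),\pi).
\]
Taking the square root of \((1+t)^2e^{-\lambda_c t}\) gives \((1+t)e^{-\lambda_c t/2}\). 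Multiplying the constants reproduces \(C_W\) exactly.

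\textbf{Main obstacle.} The delicate point is consistency of neighborhoods. Lemma~\ref{lem:wnl_l2} holds only after shrinking the neighborhood of \(\pi\) depending on \(\eta\), and Step 1 needs \(\delta\) small enough for \(c_{11}>0\). So \(\delta_0\) in Theorem~\ref{thm:local_exp} must be chosen below all of these thresholds, so that invariance of \(\mathcal V_\delta\) keeps \(\rho(t)\) in the region where both comparisons hold. Since the theorem allows shrinking \(\delta\), I expect this to reduce to choosing the minimum of finitely many radii. I also need that the upper bound in \eqref{ieq:W_l2_equiv} holds for \(\rho(t)\) itself, and not only for the linearization. This is how the lemma is stated, so no further work beyond checking that the radius is uniform in \(t\).
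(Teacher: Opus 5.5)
Your proposal is correct and follows the paper's proof essentially step for step. The paper also combines $E\le\mathcal L$ and $c_{11}\|\varphi\|_{\ell^2(\pi)}^2\le E$ at time $t$ with $\mathcal L(\rho(0),0)=E(\rho(0))\le c_{12}\|\varphi(0)\|_{\ell^2(\pi)}^2$ at time $0$ (this is where $u(0)=0$ is used), and then applies the two sides of Lemma~\ref{lem:wnl_l2} to obtain $\frac{1+\eta}{1-\eta}\sqrt{\lambda_{\max}/\lambda_1}$. Your point about choosing $\delta_0$ so the trajectory stays in the region where both the entropy bounds and Lemma~\ref{lem:wnl_l2} hold is left implicit in the paper, and it is resolved as you describe, by taking the minimum of finitely many radii.
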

\begin{proof}
	See Appendix~\ref{app:local_convergence}.
\end{proof}

\subsection{Temporal discretization error}
\label{sec:time_discretization_error}
In this section, we analyze the temporal discretization error of the semi-implicit Euler scheme (\ref{eq:num_scheme}). Recall that the graph consists of \(N\) nodes indexed by \(i,j=1,\ldots,N\), with edge weights \(\nu_{ij}\). We write \(\rho_i^k\) and \(u_{ij}^k\) for the numerical approximations at node \(i\) and edge \((i,j)\), respectively, at time \(t_k \).
\begin{equation}
\label{eq:num_scheme}
\begin{cases}
\dfrac{u_{ij}^{k+1} - u_{ij}^k}{\Delta t}
= (G_i^k - G_j^k) + \mathcal{C}_{ij}(\rho^k,u^k) - \gamma_k u_{ij}^{k+1},
\\[1.2ex]
\dfrac{\rho_i^{k+1} - \rho_i^k}{\Delta t}
= -\displaystyle\sum_j \theta_{ij}(\rho^k) u_{ij}^{k+1} \nu_{ij},
\end{cases}
\end{equation}
where $\gamma_k = \gamma(t_k)$ is the damping parameter evaluated at
$t_k = k\Delta t$. Thus the velocity is updated first and the new
velocity is used in the density update.

\begin{lemma}[Lipschitz estimates]
\label{lem:lipschitz}
Assume the dynamics are confined to a forward-invariant region
\[
\mathcal D := \left\{(\rho,u): \rho_i \ge \rho_{\min}~\forall i,\; \|u\|_\infty \le U_{\max},\; \sum_i \rho_i = 1\right\},
\]
where $\rho_{\min}>0$ and $U_{\max}<\infty$ are fixed constants. Under the assumption~\ref{ass:local_regular}, there exist constants
 $L_\theta, L_G, L_{\mathcal C,\rho},L_{\mathcal C,u} > 0$, depending only on $\rho_{\min}$, $U_{\max}$, and the kernel bounds,
such that for all $(\rho^1,u^1),(\rho^2,u^2)\in\mathcal D$,
\begin{align}
 \left\|\sum_j \bigl(\theta_{ij}(\rho^1)u_{ij}^1 - \theta_{ij}(\rho^2)u_{ij}^2\bigr)\nu_{ij}\right\|_{\ell^2}
&\le L_\theta \bigl( \|e_\rho\|_{\ell^2} + \|e_u\|_{\ell^2} \bigr), \tag{L1}\\
 \|G(\rho^1) - G(\rho^2)\|_{\ell^2}
&\le L_G \|e_\rho\|_{\ell^2}, \tag{L2}\\
 \|\mathcal C(\rho^1,u^1) - \mathcal C(\rho^2,u^2)\|_{\ell^2}
 &\le L_{\mathcal C,\rho}\|e_\rho\|_{\ell^2}+L_{\mathcal C,u}\|e_u\|_{\ell^2}, \tag{L3}
\end{align}
where $e_\rho:=\rho^1-\rho^2$, $e_u:=u^1-u^2$.
\end{lemma}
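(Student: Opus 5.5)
The proof is a term-by-term finite-dimensional argument on the fixed graph. Each estimate reduces to bounding a smooth function of $(\rho,u)$ on the compact set $\mathcal D$ and applying the mean value theorem. The constants may depend on $N$, the edge weights and the maximal degree; these are collected into the ``kernel bounds''. Throughout, we set $D_\nu:=\max_i\sum_j\nu_{ij}$ and $\nu_{\max}:=\max_{ij}\nu_{ij}$. Since $\sum_i\rho_i=1$ and $\rho_i\ge\rho_{\min}$, we have $\rho_{\min}\le\rho_i\le 1$ on $\mathcal D$. Assumption~\ref{ass:local_regular} therefore supplies the bounds $c_\theta\le\theta_{ij}\le C_\theta$, $|\partial_a\theta_{ij}|\le C_{\partial\theta}$ and $|\partial^2_{ab}\theta_{ij}|\le C_{\theta,2}$ uniformly on $\mathcal D$. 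By the symmetry $\theta(s,t)=\theta(t,s)$, the bound on $\partial_1\theta$ controls both partial derivatives.

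\textbf{(L1).} Split the difference as
\[
\theta_{ij}(\rho^1)u^1_{ij}-\theta_{ij}(\rho^2)u^2_{ij}=\bigl(\theta_{ij}(\rho^1)-\theta_{ij}(\rho^2)\bigr)u^1_{ij}+\theta_{ij}(\rho^2)\,(e_u)_{ij}.
\]
By the mean value theorem on the segment between $(\rho^1_i,\rho^1_j)$ and $(\rho^2_i,\rho^2_j)$, which stays in $[\rho_{\min},1]^2$ by convexity,
\[
|\theta_{ij}(\rho^1)-\theta_{ij}(\rho^2)|\le C_{\partial\theta}\bigl(|e_{\rho,i}|+|e_{\rho,j}|\bigr).
\]
Multiply by $|u^1_{ij}|\le U_{\max}$, sum over $j$ with weights $\nu_{ij}$, and apply Cauchy--Schwarz in the form $(\sum_j\nu_{ij}a_j)^2\le D_\nu\sum_j\nu_{ij}a_j^2$. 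Summing over $i$ then bounds the first part in $\ell^2$ by $C_{\partial\theta}U_{\max}\sqrt{2}\,D_\nu\|e_\rho\|_{\ell^2}$. The second part is bounded by $C_\theta\sqrt{D_\nu\nu_{\max}}\,\|e_u\|_{\ell^2}$, where $\|e_u\|_{\ell^2}^2=\sum_{i,j}(e_u)_{ij}^2$. Taking $L_\theta$ as the maximum of the two constants gives (L1).

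\textbf{(L2).} We have $G_i(\rho^1)-G_i(\rho^2)=\log\rho^1_i-\log\rho^2_i$, since the $\pi$ and constant terms cancel. The function $\log$ is $\rho_{\min}^{-1}$-Lipschitz on $[\rho_{\min},\infty)$, so $L_G=\rho_{\min}^{-1}$. This is the step where the lower bound $\rho_{\min}$ is indispensable.

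\textbf{(L3).} Write $\mathcal C_{ij}=\tfrac12(S_i-S_j)$ with $S_i(\rho,u):=\sum_k\partial_1\theta_{ik}(\rho)\,u_{ik}^2\,\nu_{ik}$. It suffices to bound $\|S(\rho^1,u^1)-S(\rho^2,u^2)\|_{\ell^2}$, because $\|\mathcal C\|_{\ell^2}^2\le\sum_{i,j}(S_i^2+S_j^2)/2\le N\|S\|_{\ell^2}^2$. Split the summand as
\[
\bigl(\partial_1\theta_{ik}(\rho^1)-\partial_1\theta_{ik}(\rho^2)\bigr)(u^1_{ik})^2+\partial_1\theta_{ik}(\rho^2)\bigl(u^1_{ik}+u^2_{ik}\bigr)(e_u)_{ik}.
\]
The first term is handled by the mean value theorem with $C_{\theta,2}$ and $(u^1_{ik})^2\le U_{\max}^2$, giving $L_{\mathcal C,\rho}\sim C_{\theta,2}U_{\max}^2D_\nu\sqrt N$. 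The second term uses $|u^1+u^2|\le 2U_{\max}$ and $C_{\partial\theta}$, giving $L_{\mathcal C,u}\sim C_{\partial\theta}U_{\max}\sqrt{D_\nu\nu_{\max}N}$. The Cauchy--Schwarz bookkeeping is the same as in (L1).

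The main obstacle is controlling the derivatives of the logarithmic mean. Its partial derivatives blow up as either argument tends to $0$, and on the diagonal $s=t$ the defining formula is singular (though removable). I would justify the bounds in Assumption~\ref{ass:local_regular} by the integral representation $\theta(s,t)=\int_0^1 s^{1-r}t^{r}\,dr$. It shows that $\theta$ is $C^\infty$ on $(0,\infty)^2$, including the diagonal, with derivatives bounded on $[\rho_{\min},1]^2$ by powers of $\rho_{\min}^{-1}$. Consequently all four constants depend only on $\rho_{\min}$, $U_{\max}$ and the graph/kernel quantities $D_\nu,\nu_{\max}$ (and $N$ through the edge-to-node norm conversion).
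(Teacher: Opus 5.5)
Your proposal is correct and follows essentially the same route as the paper's proof: the same add-and-subtract splitting, the mean value theorem for $\theta$ and $\partial_1\theta$, Cauchy--Schwarz against the weighted degree $D_\nu$, $L_G=\rho_{\min}^{-1}$, and the node sums $S_i$ with $\mathcal C_{ij}=\tfrac12(S_i-S_j)$ for the convection term. Your bookkeeping is a bit more careful than the paper's, since you retain the $C_\theta$, $\nu_{\max}$ and node-to-edge conversion factors that the paper suppresses; the only slip is harmless: the $e_\rho$-part of (L1) gives $2D_\nu$ rather than $\sqrt2\,D_\nu$, because both the $|e_{\rho,i}|$ and $|e_{\rho,j}|$ sums contribute a factor $D_\nu$.
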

\begin{proof}
    See Appendix~\ref{app:temporal discretization error}.
\end{proof}
\begin{lemma}[Temporal error bound]
\label{lem:temporal_error}
Assume that the dynamics satisfy the Lipschitz estimates in
Lemma~\ref{lem:lipschitz} and that \(\gamma_k=\gamma(t_k)\), where
\(\gamma:[0,T]\to(0,\infty)\) is Lipschitz continuous and satisfies
\[
0<\gamma_{\min}\le\gamma(t)\le\gamma_{\max}<\infty,
\qquad t\in[0,T].
\] Then there exists a
constant $C_{\mathrm{temp}} > 0$ (independent of $\Delta t$ and $k$) such that for all
$k$ with $t_k = k\Delta t \le T$,
\[
\|e_\rho^k\|_{\ell^2}^2 + \|e_u^k\|_{\ell^2}^2 \le C_{\mathrm{temp}} (\Delta t)^2 e^{C T},
\]
where
\(
    e_\rho^k=\rho^k-\rho(t_k),
    \quad
    e_u^k=u^k-u(t_k)
\)
are the global discretization errors at time \(t_k\).
Consequently, the $\ell^2$ error decays as $\mathcal{O}(\Delta t)$ on any
finite time interval $[0,T]$.
\end{lemma}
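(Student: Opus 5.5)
The argument is the standard consistency-plus-stability scheme for one-step methods, applied to the semi-implicit Euler scheme \eqref{eq:num_scheme}, with the Lipschitz estimates of Lemma~\ref{lem:lipschitz} supplying stability.

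\textbf{Step 1: local truncation errors.} Insert the exact solution $(\rho(t_k),u(t_k))$ of \eqref{eq:velocity_dynamics} into \eqref{eq:num_scheme} and define $\tau_u^k,\tau_\rho^k$ as the residuals. Taylor's theorem with integral remainder gives
\[
u(t_{k+1})-u(t_k)=\Delta t\,\partial_t u(t_k)+O(\Delta t^2).
\]
The implicit damping term contributes
\[
\gamma_k\bigl(u(t_{k+1})-u(t_k)\bigr)=O(\Delta t).
\]
In the density equation, replacing $u(t_k)$ by $u(t_{k+1})$ costs $O(\Delta t)$ by (L1). Hence $\|\tau_u^k\|_{\ell^2}+\|\tau_\rho^k\|_{\ell^2}\le C_\tau\Delta t$. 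The constant $C_\tau$ depends on bounds for $\partial_{tt}\rho$ and $\partial_{tt}u$ on $[0,T]$. These follow from the smoothness of the vector field on $\mathcal D$: $\log$ is smooth for $\rho\ge\rho_{\min}$ and $\theta$ has bounded derivatives by Assumption~\ref{ass:local_regular}. They also use $\|\dot\gamma\|_\infty<\infty$, because $\gamma$ is Lipschitz.

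\textbf{Step 2: error recursion.} Subtract the exact relation from the scheme. The velocity error satisfies
\[
(1+\gamma_k\Delta t)e_u^{k+1}=e_u^k+\Delta t\bigl[(G_i^k-G_j^k)-(G_i(t_k)-G_j(t_k))+\mathcal C(\rho^k,u^k)-\mathcal C(\rho(t_k),u(t_k))\bigr]-\Delta t\,\tau_u^k.
\]
The graph gradient of $G$ is bounded by a constant (depending on the finite edge set and $\max\nu_{ij}$) times $\|G\|_{\ell^2}$. Since $1/(1+\gamma_k\Delta t)\le1$, (L2) and (L3) give
\[
\|e_u^{k+1}\|\le(1+c\Delta t)\|e_u^k\|+c\Delta t\|e_\rho^k\|+C_\tau\Delta t^2.
\]
For the density error, write
\[
\theta(\rho^k)u^{k+1}-\theta(\rho(t_k))u(t_{k+1})
\]
and apply (L1) with the pair $(\rho^k,u^{k+1})$ versus $(\rho(t_k),u(t_{k+1}))$. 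This yields
\[
\|e_\rho^{k+1}\|\le\|e_\rho^k\|+L_\theta\Delta t(\|e_\rho^k\|+\|e_u^{k+1}\|)+C_\tau\Delta t^2.
\]
Substituting the $e_u^{k+1}$ bound gives, for $S_k:=\|e_\rho^k\|+\|e_u^k\|$,
\[
S_{k+1}\le(1+C\Delta t)S_k+C'\Delta t^2.
\]

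\textbf{Step 3: discrete Gronwall.} With $S_0=0$, the discrete Gronwall inequality gives
\[
S_k\le C'\Delta t\,\frac{e^{Ct_k}-1}{C}\le C''\Delta t\,e^{CT}.
\]
Squaring and using $(a+b)^2\le 2(a^2+b^2)$ in the reverse direction gives the stated bound $\|e_\rho^k\|^2+\|e_u^k\|^2\le C_{\mathrm{temp}}\Delta t^2e^{CT}$, with $C$ renamed to absorb the factor from squaring.

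\textbf{Main obstacle.} Lemma~\ref{lem:lipschitz} applies only when both the numerical and exact trajectories lie in $\mathcal D$. The lemma assumes the exact dynamics stay in $\mathcal D$, but the iterates $(\rho^k,u^{k+1})$ must be shown to lie there too. I would handle this with a bootstrap. Assume the exact solution stays in a slightly smaller region $\mathcal D'$, with margins $\rho_{\min}'>\rho_{\min}$ and $U_{\max}'<U_{\max}$. Argue by induction on $k$ that as long as $S_k\le C''\Delta t e^{CT}$, the numerical iterate is within $\ell^\infty\le\ell^2$ distance $O(\Delta t)$ of $\mathcal D'$. For $\Delta t$ below a threshold depending on the margins, it therefore stays in $\mathcal D$, which closes the induction. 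Mass conservation $\sum_i\rho_i^{k+1}=1$ holds exactly, because the flux $\theta_{ij}u_{ij}\nu_{ij}$ is antisymmetric. For larger $\Delta t$ the bound holds trivially after enlarging $C_{\mathrm{temp}}$, since both trajectories are bounded on $[0,T]$.
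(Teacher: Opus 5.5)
Your proposal is correct and follows the paper's proof step for step: an $O(\Delta t)$ truncation error, an error recursion closed by the Lipschitz bounds of Lemma~\ref{lem:lipschitz} (evaluated at $(\rho^k,u^{k+1})$ in the density stage), and discrete Gronwall from zero initial error. Your use of $1/(1+\gamma_k\Delta t)\le 1$ instead of the paper's absorption under $\gamma_k\Delta t\le 1/2$, and your bootstrap keeping the iterates in $\mathcal D$ (which the paper takes as a hypothesis, and which you still need for the large-$\Delta t$ case), are minor refinements; note also that the last step only needs $a^2+b^2\le(a+b)^2$, not a reversed form of $(a+b)^2\le 2(a^2+b^2)$.
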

\begin{proof}
See Appendix~\ref{app:temporal discretization error}.
\end{proof}

For the unregularized linearized mode
$\ddot c+\gamma\dot c+\lambda c=0$ with constant $\gamma>0$, write
$v=\dot c$. The update order in \eqref{eq:num_scheme} gives
\[
\begin{pmatrix}c^{k+1}\\v^{k+1}\end{pmatrix}
=\frac{1}{1+\gamma\Delta t}
\begin{pmatrix}
1+\gamma\Delta t-\lambda\Delta t^2 & \Delta t\\
-\lambda\Delta t & 1
\end{pmatrix}
\begin{pmatrix}c^k\\v^k\end{pmatrix}.
\]
The Jury conditions show that a nonzero mode is stable when
\begin{equation}\label{ieq:CFL_condition}
\lambda\Delta t^2<4+2\gamma\Delta t.
\end{equation}
Consequently, it is sufficient to impose this condition with the actual
$\lambda_{\max}$ of the weighted sparse graph, equivalently
\[
\Delta t<\frac{\gamma+\sqrt{\gamma^2+4\lambda_{\max}}}{\lambda_{\max}}.
\]
An estimate such as $\lambda_{\max}=O(h^{-\alpha})$, when established
under the chosen kernel normalization and uniform weight bounds, describes
mesh dependence but does not determine the prefactor for a particular
graph. The condition above applies to the unregularized linear update.

\subsection{Sparse graph approximation error}
\label{sec:sparse_error}

We now construct a sparse approximation of the discrete fractional
operator $\mathcal L_N^{\mathrm{full}}$ and quantify the resulting error. In this section, we absorb the volume element into the edge weight for
notational simplicity and write
\[ \nu_{ij} := C_{d,\alpha}h^d |x_i-x_j|^{-d-\alpha}.\]
This section incorporates the volume element directly into the graph
weight. Consequently,
\(\nu_{ij}\le C_\nu h^{-\alpha}\), $C_{\nu}>0$.

Let $\mathcal L_N^{\mathrm{full}}$ denote the discrete fractional operator obtained
by applying a standard quadrature rule to the integral fractional
Laplacian on a uniform grid:
\begin{equation}
(\mathcal L_N^{\mathrm{full}} f)_i := \sum_{j\neq i}(f_j-f_i)\nu_{ij},
\qquad \nu_{ij}\propto\|x_i-x_j\|^{-d-\alpha}.
\label{eq:discrete_fractional}
\end{equation}
 
This operator is the discrete counterpart of the negative of the usual positive fractional Laplacian, $-(-\Delta)^{\alpha/2}$.
The local radius $r_{cut}$ is the largest realized distance in the symmetric KNN edge set.
For \(0<\alpha<2\) and any cutoff radius \(r_{\mathrm{cut}}>0\),
the fractional Laplacian splits into local and long-range parts:
\[
\begin{aligned}
	(-\Delta)^{\alpha/2}u(x)
	= C_{d,\alpha}\operatorname{p.v.}
	\int_{\|y-x\|\le r_{\mathrm{cut}}}
	\frac{u(x)-u(y)}{\|x-y\|^{d+\alpha}}\,dy 
	+ C_{d,\alpha}
	\int_{\|y-x\|>r_{\mathrm{cut}}}
	\frac{u(x)-u(y)}{\|x-y\|^{d+\alpha}}\,dy,
\end{aligned}
\]
where \(\operatorname{p.v.}\) denotes the Cauchy principal value.
On the discrete grid, the local graph is defined by the symmetric
$k_{\mathrm{local}}$-nearest-neighbor relation. 
The long-range part
is estimated by a one-time Monte Carlo sampling of the L{\'e}vy kernel
during graph construction. In addition, the sampled edges and weights remain fixed
throughout the subsequent evolution, so the overall dynamics is deterministic.
\begin{remark}
    We use $C_{d,\alpha}=1$ as a normalization of the graph
    operator; the reported spectra, damping parameters, and settling times
    are expressed in this normalized unit system.
\end{remark}
\paragraph{Sampling scheme and graph symmetrization.}
The sparse graph is constructed in three steps. First, we use the symmetric local edge set
\[
\mathcal E_{\rm loc}=\{\{i,j\}:j\in\mathrm{KNN}(i)\ \text{or}\ i\in\mathrm{KNN}(j)\}.
\]
Second, for each node \(i\), let
\(
\mathcal S_i:=\{j:\ j\ne i\}.
\)
We perform \(m\) independent draws with replacement from \(\mathcal S_i\),
according to the probability distribution
\[
p_{ij}
=
\frac{|x_i-x_j|^{-(d+\alpha)}}
{\sum_{\ell\in\mathcal S_i}|x_i-x_\ell|^{-(d+\alpha)}},
\qquad j\in\mathcal S_i,
\]
so that \(\sum_{j\in\mathcal S_i}p_{ij}=1\). Equivalently,
\(
J_{i,1},\dots,J_{i,m}
\)
are i.i.d. draws from \(p_{i\cdot}\).

Third, the sampled directed edges are stored as undirected edges. For each
ordered pair \((i,j)\), define the directed inclusion indicator
\(
\eta_{ij}
:=
\mathbf 1
\left\{
\exists r\in\{1,\dots,m\}:\ J_{i,r}=j
\right\}.
\)
Repeated selections of the same directed edge are collapsed before
\(\eta_{ij}\) is defined. Equivalently, \(\eta_{ij}=1\) if direction
\(i\to j\) appears at least once among the \(m\) draws. Hence
\(
\mathbb E[\eta_{ij}]
=
1-(1-p_{ij})^m
=:
q_{ij}.
\)

Since the graph is stored as an undirected graph, define the symmetric
edge inclusion indicator
\(
\zeta_{ij}
:=
1-(1-\eta_{ij})(1-\eta_{ji}),
\)
so that \(\zeta_{ij}=\zeta_{ji}\). Its expectation is
\(
\mathbb E[\zeta_{ij}]
=
1-(1-p_{ij})^m(1-p_{ji})^m
=:
Q_{ij}.
\)
The symmetrized kernel is then defined by
\begin{equation}\label{eq:symmetric_kernel}
\tilde\nu_{ij}^{\mathrm{sym}}
:=
\zeta_{ij}\frac{\nu_{ij}}{Q_{ij}}.
\end{equation}
Consequently,
\(
\tilde\nu_{ij}^{\mathrm{sym}}
=\tilde\nu_{ji}^{\mathrm{sym}},
\mathbb E[\tilde\nu_{ij}^{\mathrm{sym}}]
=\nu_{ij}.
\)

The graph is constructed once at initialization and is fixed throughout the
simulation, so the overall dynamics is deterministic after initial sampling.

For the long-range part, let
\(
\mathcal C_i:=\{j\ne i:\{i,j\}\notin\mathcal E_{\rm loc}\}
\)
denote the set of non-KNN candidates of node \(i\).
The sparse estimator of \(\mathcal L_N^{\mathrm{full}}\) is then defined as
\begin{equation}
(L_N^{\mathrm{sparse}} f)_i
:=
\sum_{j:\{i,j\}\in\mathcal E_{\rm loc}}(f_j-f_i)\nu_{ij}
+
\sum_{j \in \mathcal C_i}
(f_j-f_i)\tilde\nu_{ij}^{\mathrm{sym}},
\label{eq:sparse_operator}
\end{equation}
where the second sum is over the long-range candidate set. Only sampled
long-range edges have nonzero weight, while local KNN edges are
deterministic. Therefore
\(
\mathbb E[(L_N^{\mathrm{sparse}} f)_i]=(\mathcal L_N^{\mathrm{full}} f)_i.
\)

Several existing results provide convergence rates for the 
discretization of the integral fractional Laplacian.
In one dimension, 
Huang and Oberman \cite{HuangOberman2014} proved that for sufficiently 
smooth functions, the 1D fractional Laplacian discretization error is 
$O(h^{2-\alpha})$ (linear interpolation) or $O(h^{3-\alpha})$ (quadratic 
interpolation) using semi-exact quadrature. For arbitrary dimension 
$d\ge 1$, Duo and Zhang \cite{DUO2019639} established that the finite 
difference scheme achieves $O(h^2)$ under 
$u \in C^{2+\lfloor\alpha\rfloor, \alpha-\lfloor\alpha\rfloor+\epsilon}(\mathbb{R}^d)$ 
with a kernel-splitting weighted trapezoidal rule ($\gamma=2$), and 
reduces to lower-order rates under minimal regularity. 

\begin{assumption}[Graph structure]
	\label{ass:graph}
	The graph \(\mathcal{G}=(\mathcal{V},\mathcal{E})\) is finite, connected and the symmetric weights satisfy $\nu_{ij} = \nu_{ji} \ge 0$,
	with $\nu_{ij} > 0$ whenever an edge exists.
	The sparse graph is sampled once at initialization. 
\end{assumption}

\begin{assumption}[Small-sampling regime]
	\label{ass:small_regime}
	The sampling budget satisfies
	\(
	m(p_{ij}+p_{ji})\le 1, 
	\forall (i,j)\in\mathcal C_i.
	\)
\end{assumption}

The following lemma states the error estimate for our graph discretization.

\begin{proposition}[Sparse vs.\ full graph approximation]
\label{prop:sparse_full}
Under Assumptions~\ref{ass:graph} and \ref{ass:small_regime}, assume
\(\|f\|_\infty\le C_f\), \(\|f\|_{\mathrm{Lip}}\le L_f\), and
\(0<\nu_{ij}\le\nu_{\max}<\infty\). Then for every \(\delta\in(0,1)\),
with probability at least \(1-\delta\),
\begin{equation}
\label{ineq:sparse_full}
\|\mathcal L_N^{\mathrm{sparse}}f-\mathcal L_N^{\mathrm{full}}f\|_\infty
\le
C_{\mathrm{graph}}
\sqrt{\frac{\log(4N/\delta)}{m}},
\end{equation}
provided the Bernstein admissibility condition \eqref{eq:uniform_sparse_condition} holds,
where \(C_{\mathrm{graph}}>0\) depends on \(d,\alpha\), the fixed grid,
\(\nu_{\max}\), \(C_f\), \(L_f\), and
\(p_{\min}:= \min_{i,\;j\in\mathcal C_i}p_{ij}>0\).
\end{proposition}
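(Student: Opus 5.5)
Since the local KNN edges are deterministic and appear identically in both operators, the difference at node \(i\) involves only the long-range candidates:
\[
(\mathcal L_N^{\mathrm{sparse}}f-\mathcal L_N^{\mathrm{full}}f)_i
=\sum_{j\in\mathcal C_i}(f_j-f_i)\bigl(\tilde\nu_{ij}^{\mathrm{sym}}-\nu_{ij}\bigr)
=\sum_{j\in\mathcal C_i}(f_j-f_i)\frac{\nu_{ij}}{Q_{ij}}\bigl(\zeta_{ij}-Q_{ij}\bigr).
\]
Each summand has mean zero because \(\mathbb E[\tilde\nu^{\mathrm{sym}}_{ij}]=\nu_{ij}\). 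The plan is to bound this sum with a Bernstein inequality at a fixed node, and then take a union bound over the \(N\) nodes and the two tails. This union bound is where \(\log(4N/\delta)\) comes from: the factor \(2N\) accounts for nodes and two sides, with another factor \(2\) from splitting the confidence budget as explained below.

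\textbf{Step 1 (independence structure).} The indicators \(\zeta_{ij}\), \(j\in\mathcal C_i\), are not independent: the \(\eta_{ij}\) come from a multinomial draw, and each \(\zeta_{ij}\) also depends on the draws made at node \(j\). My first choice is to avoid the dependence by rewriting the sum as a function of the i.i.d. draws. Write \(\zeta_{ij}\le\eta_{ij}+\eta_{ji}\) and expand \(\zeta_{ij}=\eta_{ij}+\eta_{ji}-\eta_{ij}\eta_{ji}\). Then treat the contribution \(\sum_j c_{ij}\eta_{ij}\) as a function of the \(m\) i.i.d. draws \(J_{i,1},\dots,J_{i,m}\); it can be controlled either through the sum \(\sum_r c_{iJ_{i,r}}\) of i.i.d. terms, with the duplicate-collapse correction handled separately, or through the negative association of multinomial occupancy indicators, which makes Bernstein valid. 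The contributions from the \(\eta_{ji}\) are independent across \(j\) because different nodes draw independently, so the classical Bernstein inequality applies to them directly. The product terms are of higher order in \(mp\), and Assumption~\ref{ass:small_regime} is what keeps them small. I would allocate \(\delta/2\) to each of the two groups, which produces the extra factor \(2\) inside the logarithm.

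\textbf{Step 2 (range and variance).} Set \(c_{ij}:=(f_j-f_i)\nu_{ij}/Q_{ij}\). We have \(|f_j-f_i|\le 2C_f\) (alternatively \(L_f|x_i-x_j|\), which is more useful for near candidates) and \(\nu_{ij}\le\nu_{\max}\). Under Assumption~\ref{ass:small_regime}, the elementary inequality \(1-(1-x)^m\ge (1-e^{-1})\min(1,mx)\), together with \(m(p_{ij}+p_{ji})\le1\), gives
\[
Q_{ij}\ge c\,m(p_{ij}+p_{ji})\ge c\,m\,p_{\min}.
\]
It follows that each centered summand is bounded by \(M\lesssim C_f\nu_{\max}/(m p_{\min})\). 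For the variance, \(\mathrm{Var}(\zeta_{ij})\le Q_{ij}\), so
\[
\sum_j c_{ij}^2\,Q_{ij}\le \sum_j\frac{(2C_f\nu_{\max})^2}{Q_{ij}}\lesssim \frac{|\mathcal C_i|\,C_f^2\nu_{\max}^2}{m\,p_{\min}}=:\frac{V}{m}.
\]
Here \(|\mathcal C_i|\le N\) is fixed because the grid is fixed; this dependence is absorbed into \(C_{\mathrm{graph}}\), which is allowed to depend on the grid. Bernstein then gives the tail bound
\[
\mathbb P\bigl(|\Delta_i|>s\bigr)\le 2\exp\!\Bigl(-\frac{s^2/2}{V/m+Ms/3}\Bigr).
\]

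\textbf{Step 3 (inverting the bound).} Choose \(s=C\sqrt{V\log(4N/\delta)/m}\). The Bernstein admissibility condition \eqref{eq:uniform_sparse_condition}, which I expect to say precisely that \(M s\lesssim V/m\), i.e. \(\log(4N/\delta)\lesssim m\,p_{\min}\)-type, guarantees that the variance term dominates the range term. Under that condition the sub-Gaussian regime holds and the tail is at most \(\delta/(2N)\). A union bound over \(i\) and over the two groups from Step 1 then gives \eqref{ineq:sparse_full} with \(C_{\mathrm{graph}}\simeq\sqrt V\) up to constants.

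The main obstacle is Step 1: the dependence created by drawing with replacement and then collapsing duplicates, combined with symmetrization across nodes. If the negative-association route becomes messy, the fallback is a bounded-differences (McDiarmid) inequality over all \(Nm\) independent draws. Changing a single draw \(J_{i,r}\) alters \(\Delta_i\) by at most \(O(C_f\nu_{\max}/(mp_{\min}))\), which gives a bound of order \(\sqrt{\log(N/\delta)/m}\) with a possibly worse constant. This is still of the claimed form, since \(C_{\mathrm{graph}}\) is only asserted to exist.
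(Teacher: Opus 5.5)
Your Steps 2 and 3 match the paper. That covers the range bound $|c_{ij}|=|b_{ij}|/Q_{ij}\le B_f/m$ from $Q_{ij}\ge(1-e^{-1})\,m(p_{ij}+p_{ji})$, the per-edge variance $b_{ij}^2/Q_{ij}$, the restriction to the sub-Gaussian range of Bernstein, and the union bound over nodes. Your reading of the admissibility condition is also right in substance: it says $\log(4N/\delta)\le\hat c_1\hat c_2^2\,m$, which keeps the range term $B_f t/(6m)$ of the same order as the variance $C_F/m$.

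\textbf{Step 1, primary route: two gaps.}
\begin{itemize}
\item After the inclusion--exclusion split, the group $\sum_{j}c_{ij}\bigl(\eta_{ij}\eta_{ji}-q_{ij}q_{ji}\bigr)$ gets neither a tail bound nor a share of $\delta$. Calling it ``higher order in $mp$'' only controls its variance; indeed $c_{ij}^2\,q_{ij}q_{ji}\le\min(q_{ij},q_{ji})\,b_{ij}^2/Q_{ij}$. But its summands have the same range $B_f/m$ as the main terms. Variance alone plus a union bound (Chebyshev) gives only $\sqrt{N/(\delta m)}$, not $\sqrt{\log(N/\delta)/m}$.
\item Bernstein for negatively associated (NA) variables needs all summands monotone in the same direction. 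With mixed-sign $c_{ij}$, the terms $c_{ij}\eta_{ij}$ and $c_{ik}\eta_{ik}$ can be positively correlated. So you must also split $\mathcal C_i$ by the sign of $b_{ij}=\nu_{ij}(f_j-f_i)$.
\end{itemize}
The product group is NA as well, so it can be repaired the same way, but then the decomposition buys nothing. The paper avoids it entirely:
\begin{itemize}
\item $\{\eta_{ij}\}_j$ are occupancy indicators of node $i$'s multinomial draws. Together with $\{\eta_{ji}\}_j$, which are independent of these and of each other, they form an NA family.
\item $\zeta_{ij}=\eta_{ij}\vee\eta_{ji}$ is a nondecreasing function of disjoint pairs, so $\{\zeta_{ij}\}_{j\in\mathcal C_i}$ is itself NA.
\item NA-Bernstein with threshold $t/2$ on each sign class gives the constant $4$ as two sign classes times two tails, not as a split of $\delta$ between groups.
\end{itemize}

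\textbf{Step 1, McDiarmid fallback: complete and genuinely different.} All $Nm$ draws are independent, and $\mathbb E[\Delta_i]=0$ by unbiasedness. The bounded differences are as follows:
\begin{itemize}
\item changing one draw $J_{i,r}$ alters at most two indicators $\zeta_{ij}$;
\item changing a draw $J_{k,r}$ with $k\in\mathcal C_i$ alters only $\zeta_{ik}$;
\item draws at other nodes do not affect $\Delta_i$.
\end{itemize}
With $|c_{ij}|\le B_f/m$, the squared increments sum to at most $(4+|\mathcal C_i|)B_f^2/m$. Hence $\mathbb P(|\Delta_i|\ge s)\le2\exp\bigl(-2ms^2/((4+|\mathcal C_i|)B_f^2)\bigr)$, and a union bound gives the claim with $C_{\mathrm{graph}}\asymp B_f\sqrt N$.

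This route needs no negative association, no sign splitting, and no admissibility condition; it holds for every $\delta\in(0,1)$. What it gives up is variance sensitivity. Its constant is driven by worst-case increments over all $|\mathcal C_i|+1$ nodes whose draws touch node $i$. The paper's is driven by $C_{\mathrm{var}}=\sup_i\sum_{j\in\mathcal C_i}b_{ij}^2/(p_{ij}+p_{ji})\le B_f^2\bigl(1+\sup_i\sum_j p_{ji}\bigr)$, compared with $(4+|\mathcal C_i|)B_f^2$ in your bound. That is generally much smaller when the column sums $\sum_j p_{ji}$ are of order one.
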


\begin{proof}
   See Appendix~\ref{app:graph error}.
\end{proof}

Figure~\ref{fig:convergence_sparse} gives a numerical illustration
consistent with the predicted $m^{-1/2}$ sparse-estimator scaling. For panels (b)--(c) of Figure~\ref{fig:convergence_sparse}, we use
\(f(x)=e^{-\|x\|^2}\) and evaluate the discrete operator at the grid
point closest to the origin. The full-graph approximation is computed
on the truncated domain \([-L,L]^2\) and compared with the full-space
reference
\(
(-\Delta)^{\alpha/2}f(0)
=
2^\alpha\Gamma(1+\alpha/2),
\)
using the standard normalization \(C_{2,\alpha}\).
Hence the reported errors contain both spatial discretization and
domain-truncation effects.

\begin{remark}
For a fixed discrete graph, Proposition~\ref{prop:sparse_full} gives a
uniform sparse-graph error of order
\(\mathcal{O}(\sqrt{\log N/m})\).
\end{remark}

\begin{figure}[htbp]
    \centering
    \begin{subfigure}[t]{0.30\textwidth}
        \centering
        \includegraphics[width=\textwidth]{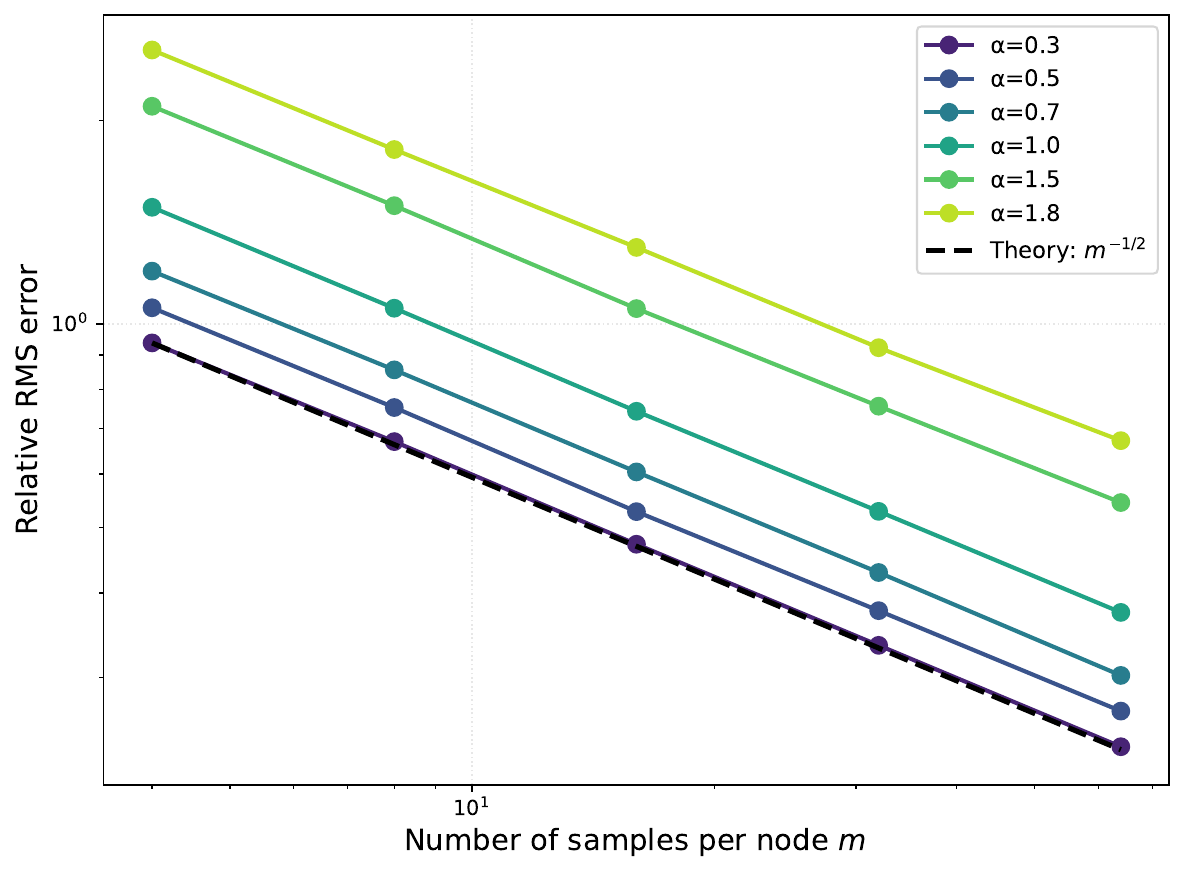}
        \caption{MC convergence of sparse graph}
        \label{fig:mc_convergence_m}
    \end{subfigure}
    \hfill
    \begin{subfigure}[t]{0.30\textwidth}
        \centering
        \includegraphics[width=\textwidth]{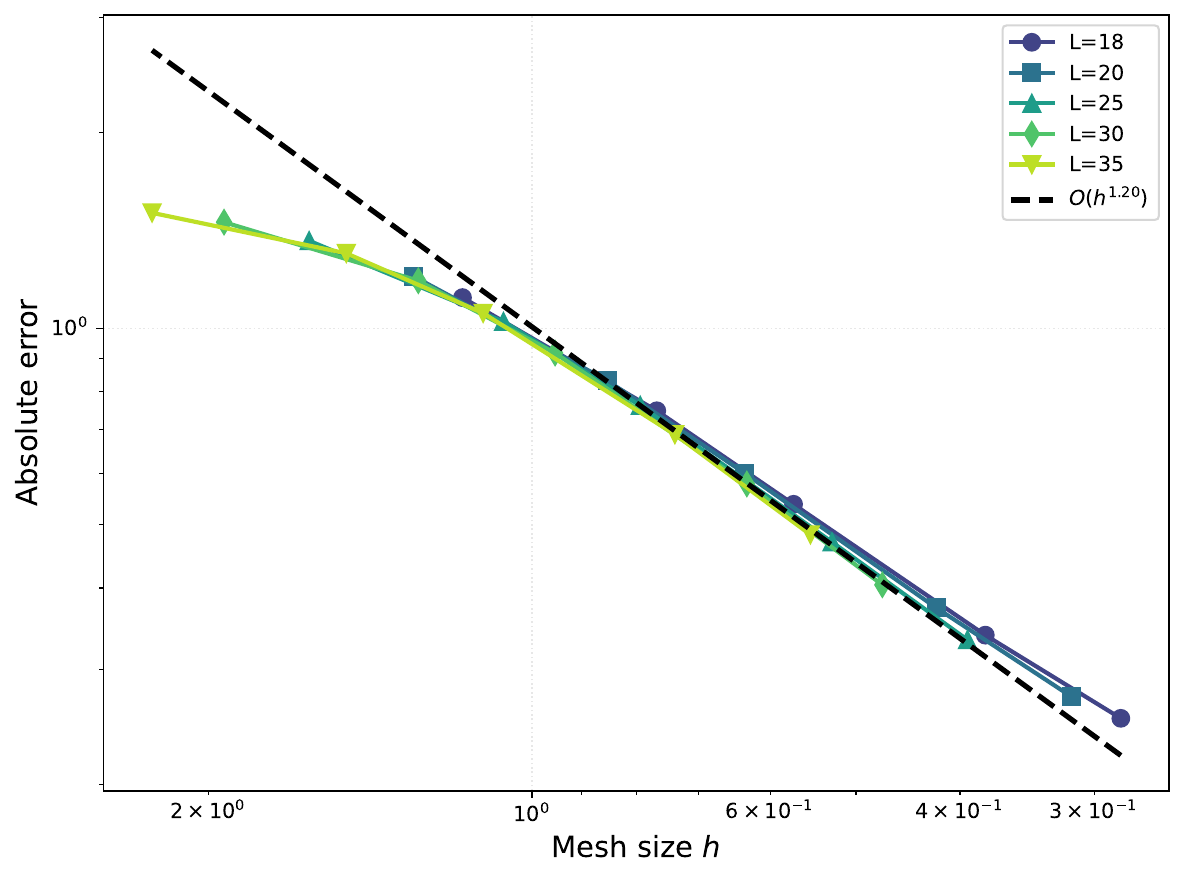}
        \caption{$\alpha=0.8$}
        \label{fig:error_alpha0.8}
    \end{subfigure}
    \hfill
    \begin{subfigure}[t]{0.30\textwidth}
        \centering
        \includegraphics[width=\textwidth]{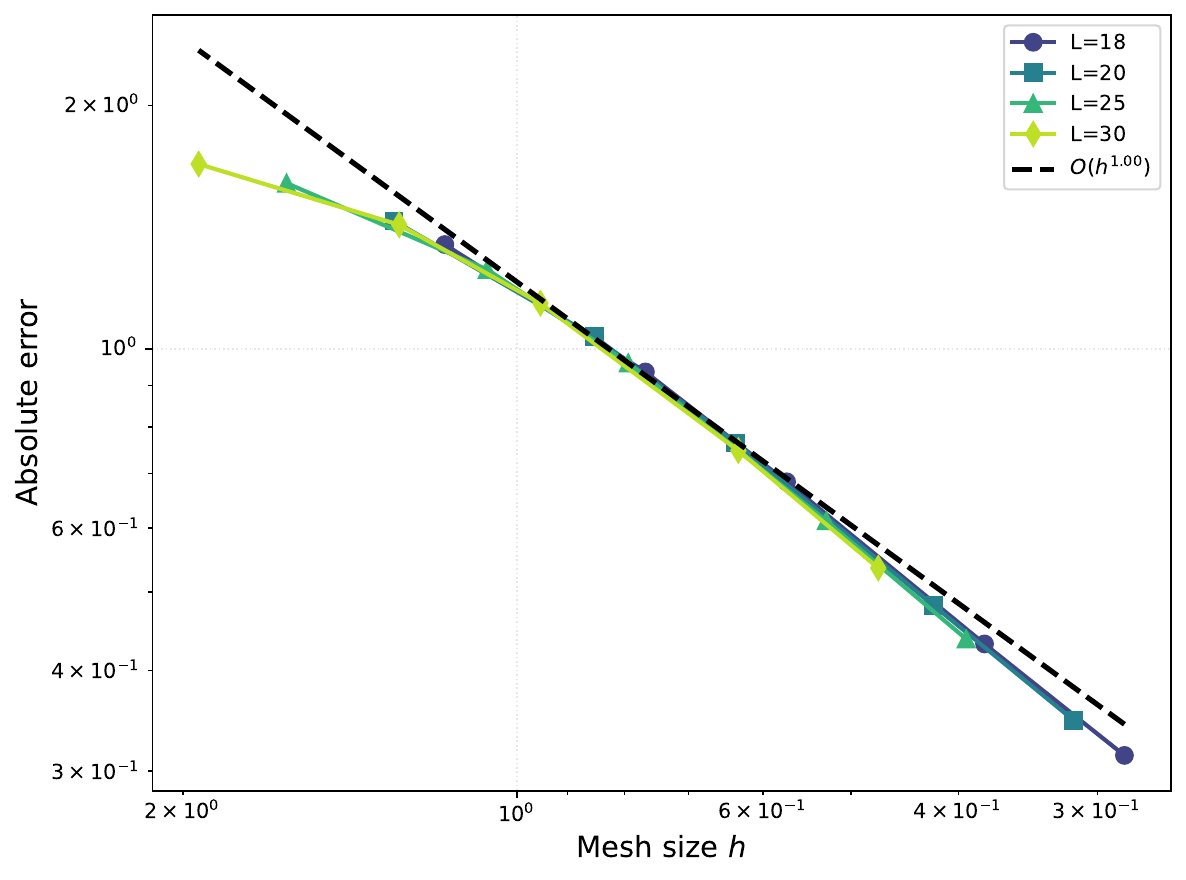}
        \caption{$\alpha=1.0$}
        \label{fig:error_alpha1.0}
    \end{subfigure}
    \caption{ Convergence analysis of the discrete fractional operator.
            (a) Numerical illustration of the \(m^{-1/2}\) scaling for the
            fixed-function sparse estimator.
            (b)--(c) Pointwise absolute errors of the truncated full-graph
            approximation for \(\alpha=0.8\) and \(\alpha=1.0\).
            The dashed lines indicate the reference slopes \(2-\alpha\).
        }
    \label{fig:convergence_sparse}
\end{figure}

The operator estimate above controls a fixed test function. To propagate this approximation through the nonlinear dynamics, we next apply the same sparse-graph concentration argument uniformly along the full-graph trajectory.

\begin{lemma}[Sparse graph forcing error along the full-graph trajectory]
\label{lem:graph_forcing}
Let $z_F^k=(\rho_F^k,u_F^k)$, $0\le k\le K$, denote the
full-graph solution of \eqref{eq:num_scheme}, with deterministic initial
data shared by the sparse scheme and $K=\lfloor T/\Delta t\rfloor$. Assume that the hypotheses of Proposition~\ref{prop:sparse_full}
are satisfied and that the density and velocity bounds stated in
Lemma~\ref{lem:lipschitz} hold uniformly along this trajectory.
For every $\delta_{\rm p}\in(0,1)$, with probability at least
$1-\delta_{\rm p}$,
\[
\max_{0\le k\le K}
\|F^{\rm sparse}(z_F^k)-F^{\rm full}(z_F^k)\|_{\mathcal X}
\le C_{\rm force}
\sqrt{\frac{\log(C_0N(K+1)/\delta_{\rm p})}{m}},
\]
provided the threshold in the Bernstein estimate lies in its
admissible range. On the same event, the density component of the forcing error satisfies the same bound at $(\rho_F^k,u_F^{k+1})$ for all $0\le k<K$.
Here $C_0$ is an absolute constant, and $C_{\rm force}$ is independent
of $m$ and $K$ for the stated uniform state bounds. 
\begin{proof}
	See Appendix~\ref{app:graph error}.
\end{proof}

\end{lemma}
Combining the sparse forcing error with the temporal discretization bound and the continuous-time relaxation estimate gives the total error decomposition below.
\begin{theorem}[Finite-horizon total error bound]
\label{thm:total_error}
Let $\rho_h^k=\rho_S^k$ be the sparse-graph solution of
\eqref{eq:num_scheme}, and let $(\rho^F(t),u^F(t))$ be the continuous
full-graph solution with the same initial data.
Assume the hypotheses of Theorem~\ref{thm:local_exp},
Lemma~\ref{lem:temporal_error}, and Lemma~\ref{lem:graph_forcing}
hold on $[0,T]$, with both discrete trajectories in the bounded
region of Lemma~\ref{lem:lipschitz}.
Then, with probability at least $1-\delta_{\rm p}$, simultaneously
for $t_k=k\Delta t\le T$,
\begin{equation}\label{eq:total_error}
\begin{aligned}
\|\rho_h^k-\pi\|_{\ell^2(\pi)}
\le{}&C_{\rm temp}e^{CT}\Delta t
+C_{\rm graph,T}
\sqrt{\frac{\log(C_0N(K+1)/\delta_{\rm p})}{m}}\\
&+C_{\rm exp}(1+t_k)e^{-r_1(\gamma)t_k},
\qquad K=\lfloor T/\Delta t\rfloor.
\end{aligned}
\end{equation}
The constants are independent of $m$, $\Delta t$ and $k$ for fixed $T$.
\begin{proof}
	See Appendix~\ref{app:graph error}.
\end{proof}

\end{theorem}
Theorem~\ref{thm:total_error} measures the distance between the numerical
solution $\rho_h^k$ and the target distribution $\pi$.
The following corollary translates the total error bound from
the \(\ell^2(\pi)\)-norm to the nonlocal Wasserstein metric using
Lemma~\ref{lem:wnl_l2}.

\begin{corollary}[Finite-horizon Wasserstein error bound]
\label{cor:wnl_error}
Under the assumptions of Theorem~\ref{thm:total_error},
assume in addition
that \(\rho_h^k\) remains in the neighborhood
of Lemma~\ref{lem:wnl_l2} for all \(0\le t_k\le T\).
Then, with probability at least \(1-\delta_{\rm p}\),
the following estimate holds for all \(0\le t_k\le T\):
\begin{equation}
\begin{aligned}
W_{\mathrm{NL}}(\rho_h^k,\pi)
\le{}
\frac{1+\eta}{\pi_{\min}\sqrt{\lambda_1}}
\Bigg[
&
C_{\mathrm{temp}}e^{CT}\Delta t+
C_{\rm graph,T}
\sqrt{\frac{\log(C_0N(K+1)/\delta_{\rm p})}{m}}
\\
&+
C_{\mathrm{exp}}(1+t_k)e^{-r_1(\gamma)t_k}
\Bigg].
\end{aligned}
\label{eq:wnl_error}
\end{equation}
If \(\gamma=\gamma^*=2\sqrt{\lambda_1}\), then \(r_1(\gamma)=\sqrt{\lambda_1}\). Here $W_{\mathrm{NL}}$ and the spectral constant $\lambda_1$
refer to the same weighted graph.
\end{corollary}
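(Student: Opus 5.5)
The argument is a norm conversion. We feed the $\ell^2(\pi)$ bound of Theorem~\ref{thm:total_error} into the upper half of the local equivalence in Lemma~\ref{lem:wnl_l2}. The only mismatch between the two results is the norm: Lemma~\ref{lem:wnl_l2} is stated in $\|\rho-\pi\|_{\pi^{-1}}$, while Theorem~\ref{thm:total_error} controls $\|\rho_h^k-\pi\|_{\ell^2(\pi)}$. The factor $\pi_{\min}^{-1}$ in \eqref{eq:wnl_error} is meant to absorb this conversion.

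\textbf{Step 1.} Fix the probability event of Theorem~\ref{thm:total_error}, which has probability at least $1-\delta_{\rm p}$. On this event, \eqref{eq:total_error} holds simultaneously for all $t_k\le T$. All subsequent steps are deterministic on this event, so no further union bound is needed.

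\textbf{Step 2.} By hypothesis, $\rho_h^k$ stays in the neighborhood of Lemma~\ref{lem:wnl_l2}, with the same $\eta$ and the same weighted graph, so that $\lambda_1$ matches. Hence
\[
W_{\mathrm{NL}}(\rho_h^k,\pi)\le\frac{1+\eta}{\sqrt{\lambda_1}}\|\rho_h^k-\pi\|_{\pi^{-1}}.
\]

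\textbf{Step 3.} Compare the two norms. Componentwise,
\[
\frac{(\rho_i-\pi_i)^2}{\pi_i}\le \pi_{\min}^{-1}(\rho_i-\pi_i)^2 .
\]
We read the $\ell^2(\pi)$-norm in Theorem~\ref{thm:total_error} as applied to the difference $\rho_h^k-\pi$. Then
\[
\sum_i(\rho_i-\pi_i)^2\le \pi_{\min}^{-1}\sum_i\pi_i(\rho_i-\pi_i)^2 ,
\]
which gives
\[
\|\rho-\pi\|_{\pi^{-1}}\le \pi_{\min}^{-1}\|\rho-\pi\|_{\ell^2(\pi)}.
\]
Combining this with Steps 1 and 2 yields exactly \eqref{eq:wnl_error}.

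\textbf{Step 4.} For the final assertion, set $\gamma=\gamma^*$. Corollary~\ref{cor:acceleration} gives $r_1(\gamma^*)=\sqrt{\lambda_1}$, which we substitute into the exponential term.

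\textbf{Main obstacle.} Step 3 is the delicate point: the constant depends on which convention for $\ell^2(\pi)$ is in force. Suppose the relative convention $\|\varphi\|_{\ell^2(\pi)}$ is used instead. Then the two norms coincide, as noted in Section~\ref{sec:linearization}, and the factor $\pi_{\min}^{-1}\ge1$ simply makes the bound weaker but still valid. Either way the inequality holds, because $\pi_{\min}\le1$.

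I would also check that the neighborhood in Lemma~\ref{lem:wnl_l2} was obtained by shrinking for the same $\eta$. This is guaranteed by the added hypothesis of the corollary.
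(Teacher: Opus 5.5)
Your proposal is correct and matches the paper's proof: apply the upper bound of Lemma~\ref{lem:wnl_l2}, convert norms via \(\|\rho_h^k-\pi\|_{\pi^{-1}}\le\pi_{\min}^{-1}\|\rho_h^k-\pi\|_{\ell^2(\pi)}\), and then insert Theorem~\ref{thm:total_error} on its event of probability at least \(1-\delta_{\rm p}\). Your observation that the inequality holds under either reading of the \(\ell^2(\pi)\) norm, because \(\pi_{\min}\le 1\), is a small robustness check that the paper does not spell out.
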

\begin{proof}
	See Appendix~\ref{app:graph error}.
\end{proof}

\section{Numerical Experiments}\label{sec: numerical results}

In this section, we assess the practical performance of our method on a range of 
synthetic and real-world sampling tasks, comparing against standard baselines—MCMC, 
MALA (Metropolis-adjusted Langevin algorithm) and ULD (underdamped Langevin dynamics). The test problems comprise three synthetic distributions (banana, single ring, and four-Gaussian-mixture), the MNIST image dataset, and the alanine dipeptide molecular conformation dataset. 
The experiments assess the effects of nonlocality and damping, multimodal exploration, and computational performance against baseline samplers.
\subsection{Experimental Setup}

All experiments share a common discretization and graph construction framework. The spatial domain \([-5,5]^2\) is discretized using a \(50 \times 50\) uniform grid, yielding mesh size \(h = 10/49 \approx 0.2\). The numerical graph combines a symmetric KNN local graph with
long-range edges sampled according to the L{\'e}vy distance kernel. Local connectivity is established through \(k_{\text{local}} = 10\) nearest neighbors, ensuring \(k_{\text{local}} > \log N\). At \(\alpha=1.0\), the long-range sampling budget is \(m=3\) per node. The time step is \(\Delta t = 5 \times 10^{-4}\). For the fixed-step experiments below, we use \(5\times10^4\) time steps.
The wall-clock benchmark uses a separate fixed-time protocol described later. For all benchmark comparisons, the proposed method and the particle-based samplers are initialized from the same Gaussian distribution.
In the implementation, a density floor of $10^{-12}$ followed by
renormalization, bounded velocity and convection forcing, and
machine-precision regularization are used as numerical safeguards.

We investigate the effect of the kernel decay exponent \(\alpha\) on sampling performance by varying it from \(0.1\) to \(2.0\). We first report results on the banana-shaped target to illustrate the behavior of the method. We also monitor the transport length and the long-range flux ratio over time as diagnostics of nonlocal transport.

\subsection{Effect of Nonlocality}

We first investigate the influence of the fractional order $\alpha$ on the convergence behavior of the proposed method. The banana distribution is a non-convex target with two modes separated by an energy barrier \cite{stein2025accelerated}, making it a representative testbed to evaluate performance in curved geometries.

\begin{figure}[htbp]
    \centering
    
    \begin{subfigure}[b]{0.22\textwidth}
        \centering
        \includegraphics[width=\textwidth, height=5.5cm, keepaspectratio]{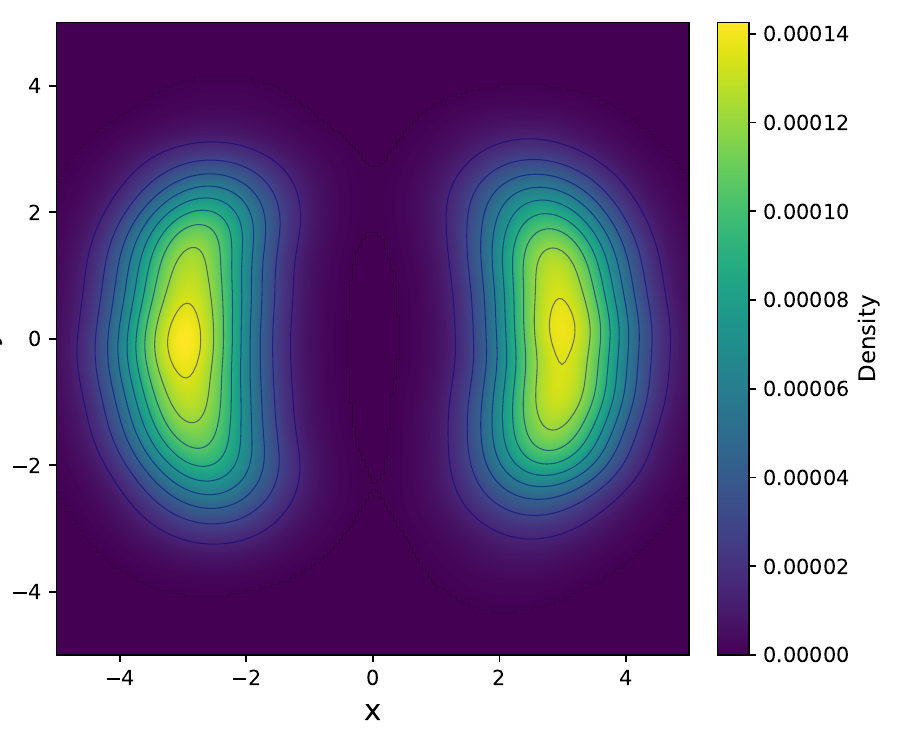}
      
        \label{fig:sampling_alpha0.3}
    \end{subfigure}
    \hfill
    \begin{subfigure}[b]{0.22\textwidth}
        \centering
        \includegraphics[width=\textwidth, height=5.5cm, keepaspectratio]{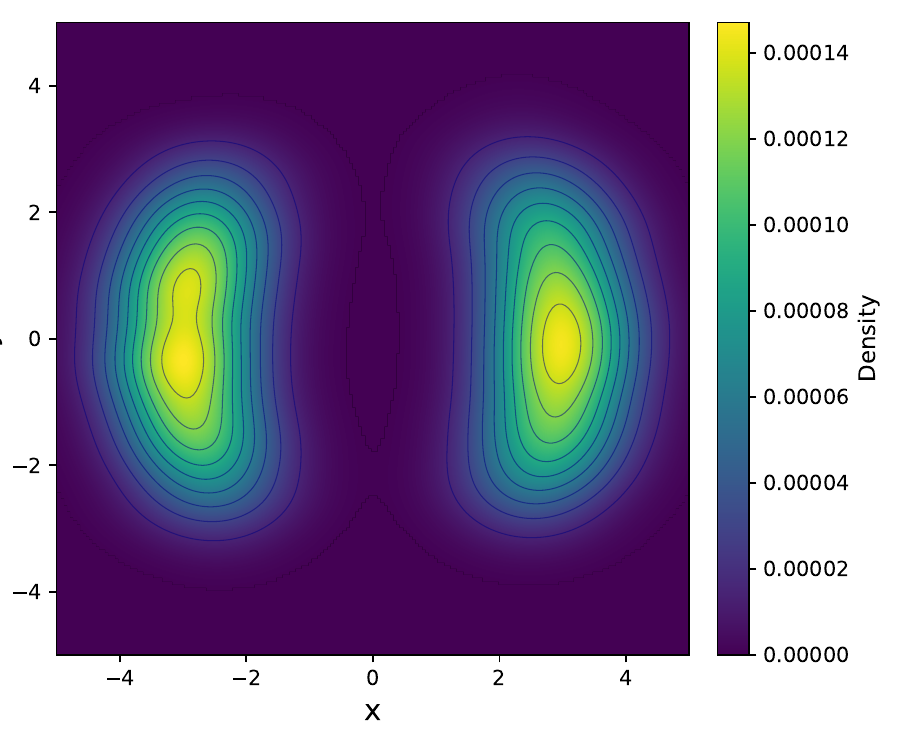}
      
        \label{fig:sampling_alpha0.7}
    \end{subfigure}
    \hfill
    \begin{subfigure}[b]{0.22\textwidth}
        \centering
        \includegraphics[width=\textwidth, height=5.5cm, keepaspectratio]{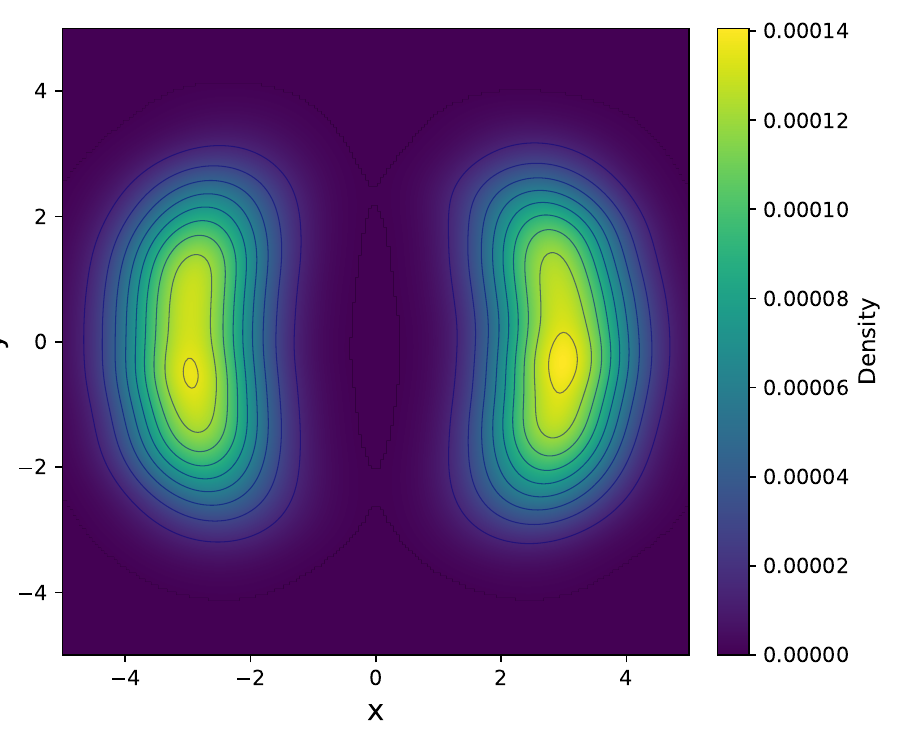}
       
        \label{fig:sampling_alpha1.5}
    \end{subfigure}
    \hfill
    \begin{subfigure}[b]{0.22\textwidth}
        \centering
        \includegraphics[width=\textwidth, height=5.5cm, keepaspectratio]{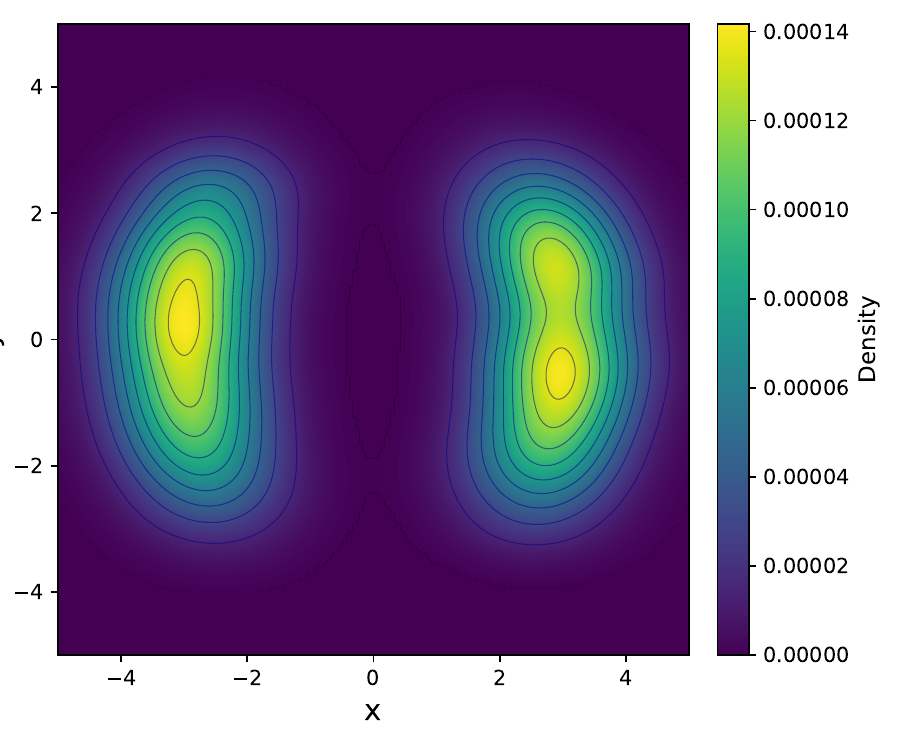}
      
        \label{fig:sampling_alpha1.8}
    \end{subfigure}
    
    \vspace{0.3cm}
    
    \begin{subfigure}[b]{0.22\textwidth}
        \centering
        \includegraphics[width=\textwidth, height=5.5cm, keepaspectratio]{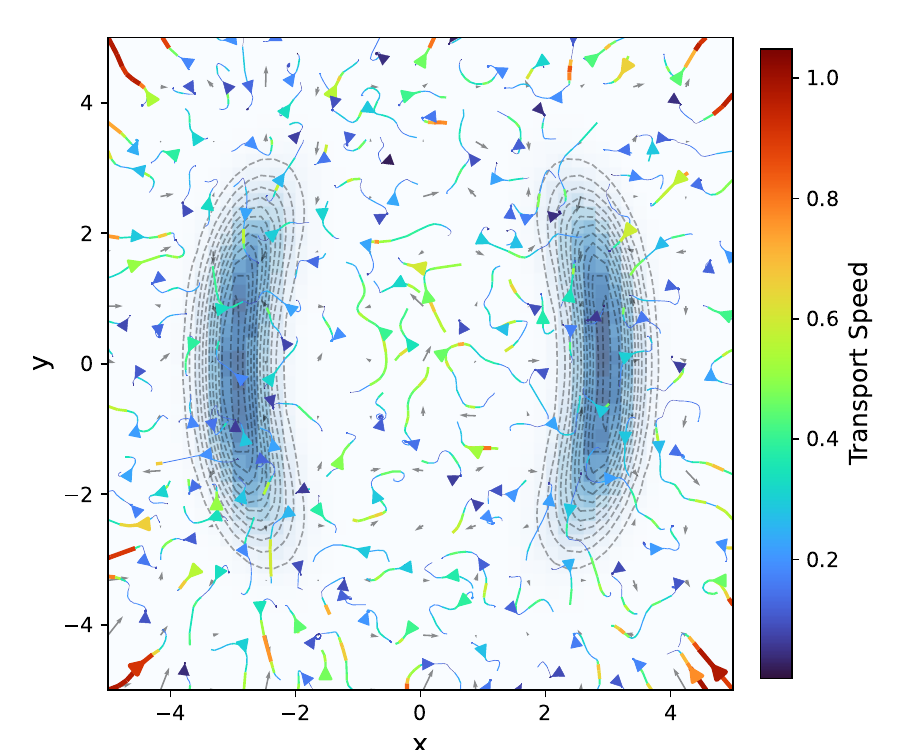}
     
        \label{fig:transport_alpha0.3}
    \end{subfigure}
    \hfill
    \begin{subfigure}[b]{0.22\textwidth}
        \centering
        \includegraphics[width=\textwidth, height=5.5cm, keepaspectratio]{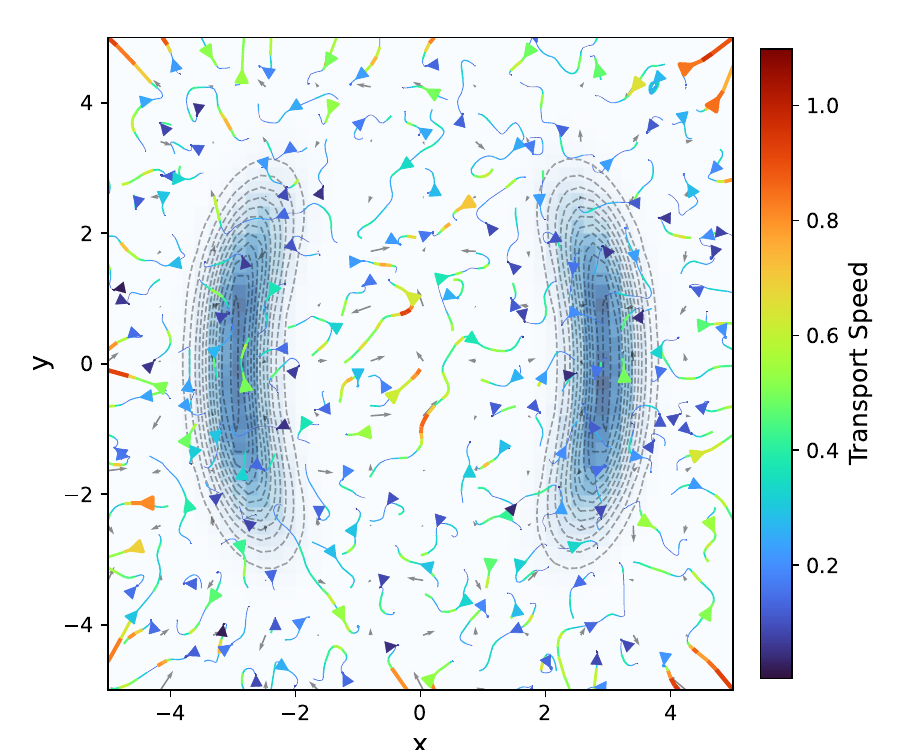}
      
        \label{fig:transport_alpha0.7}
    \end{subfigure}
    \hfill
    \begin{subfigure}[b]{0.22\textwidth}
        \centering
        \includegraphics[width=\textwidth, height=5.5cm, keepaspectratio]{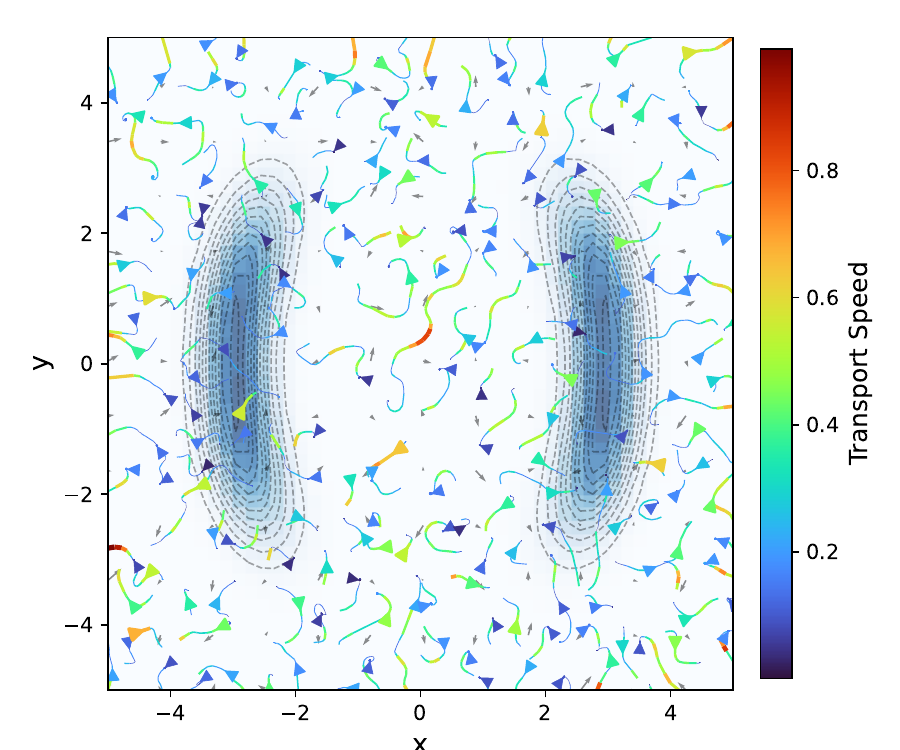}
 
        \label{fig:transport_alpha1.5}
    \end{subfigure}
    \hfill
    \begin{subfigure}[b]{0.22\textwidth}
        \centering
        \includegraphics[width=\textwidth, height=5.5cm, keepaspectratio]{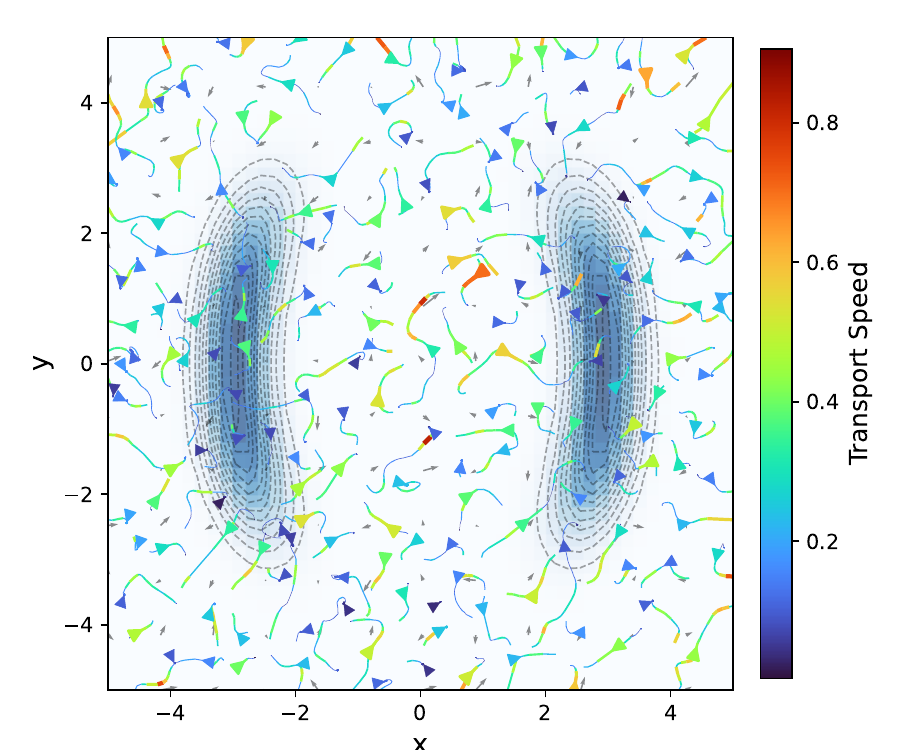}
   
        \label{fig:transport_alpha1.8}
    \end{subfigure}
    
    \caption{Sampling results (kernel density estimation (KDE), top) and transport fields (bottom) for different fractional orders $\alpha = 0.3, 0.7, 1.5, 1.8$ (from left to right).}
    \label{fig:alpha_comparison}
\end{figure}

\begin{figure}[htbp]
	\centering
	\begin{subfigure}[b]{0.48\textwidth}
		\centering
		\includegraphics[width=\textwidth, keepaspectratio=false]{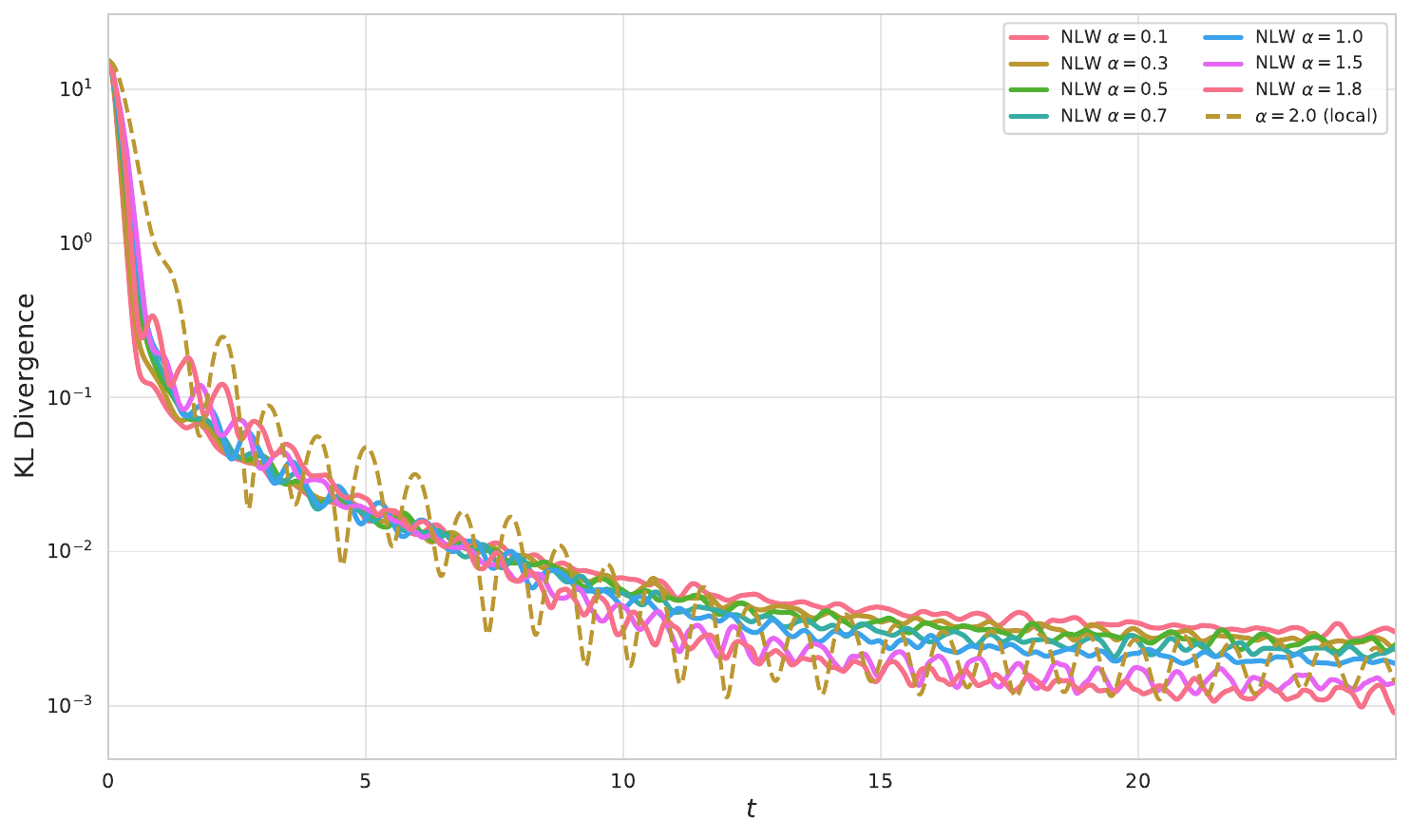}
		\caption{KL divergence}
		\label{fig:kl_curves}
	\end{subfigure}
	\hfill
	\begin{subfigure}[b]{0.48\textwidth}
		\centering
		\includegraphics[width=\textwidth, keepaspectratio=false]{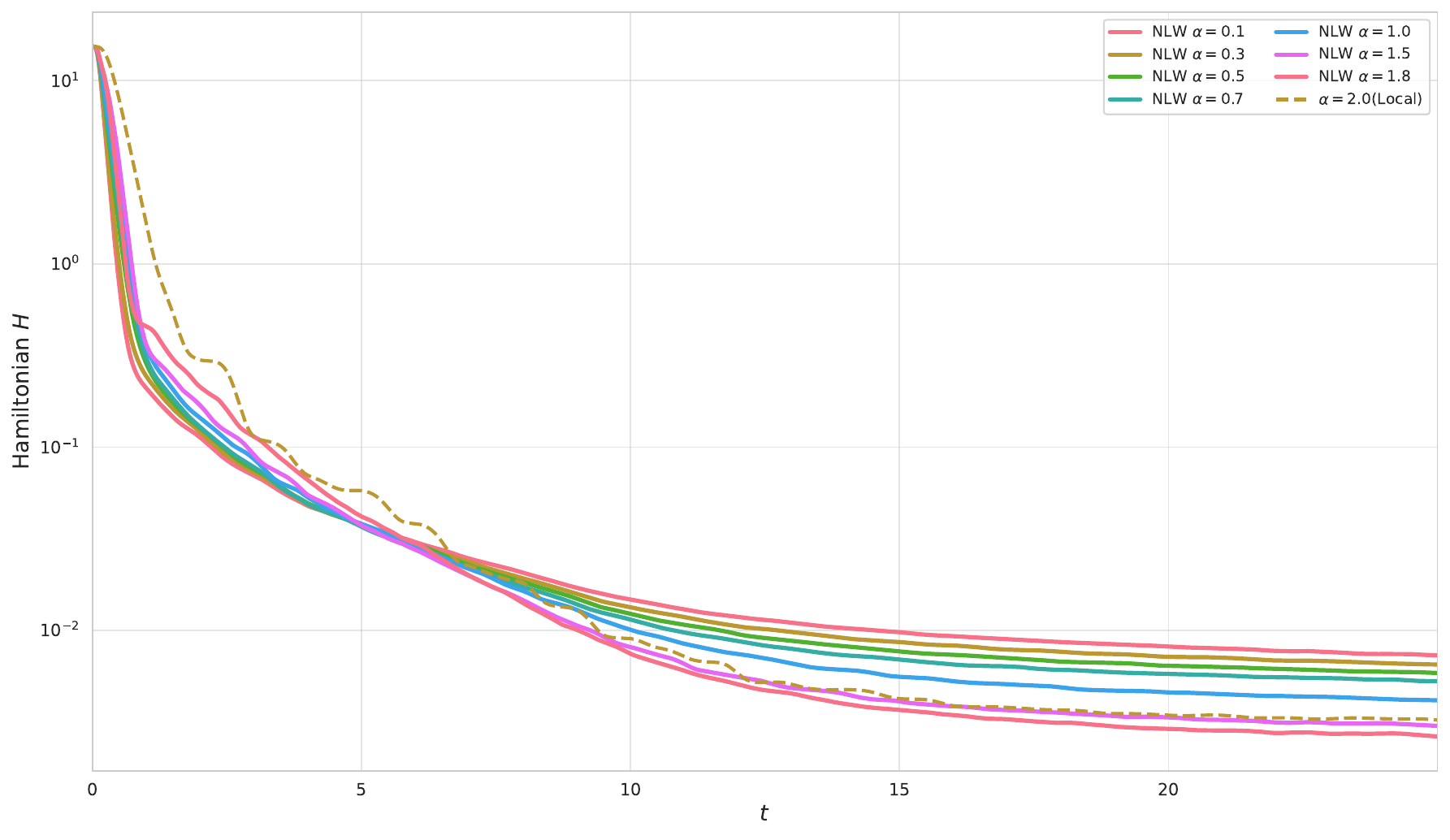}
		\caption{Hamiltonian}
		\label{fig:Hamiltonian_curves}
	\end{subfigure}
	\caption{Evolution of KL divergence and Hamiltonian over time for the banana distribution with different fractional orders $\alpha$.}
	\label{fig:banana_metrics_evolution}
\end{figure}

From Figure~\ref{fig:alpha_comparison}, we observe that the proposed method 
achieves complete mode coverage for all tested $\alpha$ values, demonstrating 
its robustness across different levels of nonlocality. Smaller \(\alpha\) yields faster, longer-range transport (indicated by redder streamlines and stronger nonlocal flux), whereas larger \(\alpha\) leads to more localized, diffusive transport while still preserving full mode coverage.

Figure~\ref{fig:banana_metrics_evolution} compares the convergence behavior for different \(\alpha\) on the Banana distribution, showing both the KL divergence and the Hamiltonian evolution over time.
First, smaller \(\alpha\) produces faster initial decay, which is consistent with enhanced long-range exploration under stronger nonlocal interactions. Second, we observe oscillatory behavior in the KL divergence, which is typical 
for second-order gradient methods with momentum \cite{su2016differential, attouch2018fast}. The Hamiltonian decreases monotonically, consistent with the dissipation structure of the damped dynamics. Third, larger $\alpha$ ($\alpha \geq 1.5$) produces more localized interactions and slower convergence in this experiment. These results confirm that the nonlocal mechanism is essential for effective sampling on non-convex landscapes.

\begin{table}[htbp]
    \centering
    \caption{Nonlocal transport characteristics on the Banana distribution.} 
    \begin{tabular}{cccc}
        \hline
        $\alpha$ & Mode & Long Jump Ratio & Transport Length \\
        \hline
        0.10 & Nonlocal & 0.145 & 1.118 \\
        0.30 & Nonlocal & 0.129 & 0.980 \\
        0.50 & Nonlocal & 0.093 & 0.828 \\
        0.70 & Nonlocal & 0.069 & 0.729 \\
        1.00 & Nonlocal & 0.031 & 0.571 \\
        1.50 & Nonlocal & 0.016 & 0.451 \\
        \hline
        2.00 & Local KNN baseline & 0.000 & 0.234 \\
        \hline
    \end{tabular}
    \label{tab:banana_transport}
\end{table}
We quantify the strength of nonlocal transport through two diagnostic statistics: the long-range flux ratio
\(R_{\text{long}} = \frac{\sum_{|x_i-x_j| > r_{\text{cut}}} |m_{ij}|}{\sum_{ij} |m_{ij}|}\),
and the transport length
\(L(t) = \frac{\sum_{ij} |m_{ij}| \, |x_i-x_j|}{\sum_{ij} |m_{ij}|}\),
where \(m_{ij} = \theta_{ij} u_{ij} \nu_{ij}\) denotes the flux along edge \((i,j)\), \(r_{\rm cut}\) is the largest distance among the retained
symmetric KNN edges. Table~\ref{tab:banana_transport} summarizes these statistics across different \(\alpha\) values. The long jump ratio decreases from \(0.145\) at \(\alpha=0.1\) to \(0.016\) at \(\alpha=1.5\), confirming that smaller \(\alpha\) induces stronger long-range connectivity. The transport length follows the same trend, decreasing from \(1.118\) to \(0.451\) as \(\alpha\) increases. These results confirm that \(\alpha\) systematically controls the strength of nonlocal interactions. The $\alpha=2.0$ entry is obtained with the separate local KNN solver, with long-range edges disabled. This agrees with the nonlocal-to-local transition
studied in~\cite{Du2022NonlocalDM}.

\subsection{First- and Second-Order Dynamics}

We compare the first-order counterpart (Eq.~(\ref{eQ:first_NLW})) against the proposed second-order flow. Three representative targets are considered: single ring, banana, and four-Gaussian-mixture, all with \(\alpha = 1.0\).

\begin{figure}[!ht]
    \centering
    \begin{subfigure}[b]{0.32\textwidth}
        \centering
        \includegraphics[width=\textwidth]{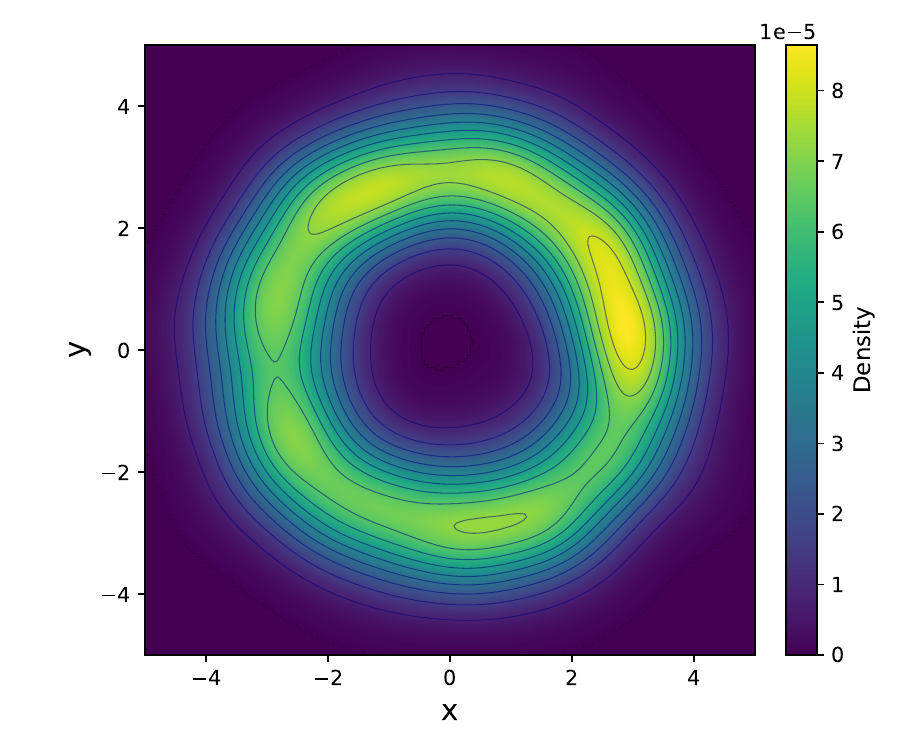}
        \caption{}
        \label{fig:first_order_single_ring}
    \end{subfigure}
    \hfill
    \begin{subfigure}[b]{0.32\textwidth}
        \centering
        \includegraphics[width=\textwidth]{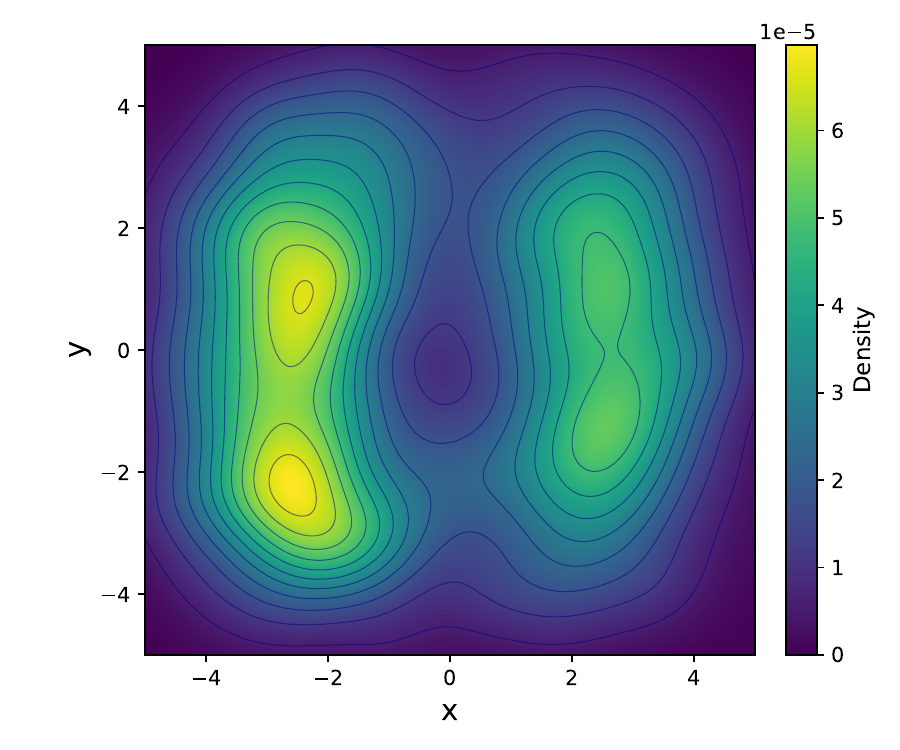}
        \caption{}
        \label{fig:first_order_banana}
    \end{subfigure}
    \hfill
    \begin{subfigure}[b]{0.32\textwidth}
        \centering
        \includegraphics[width=\textwidth]{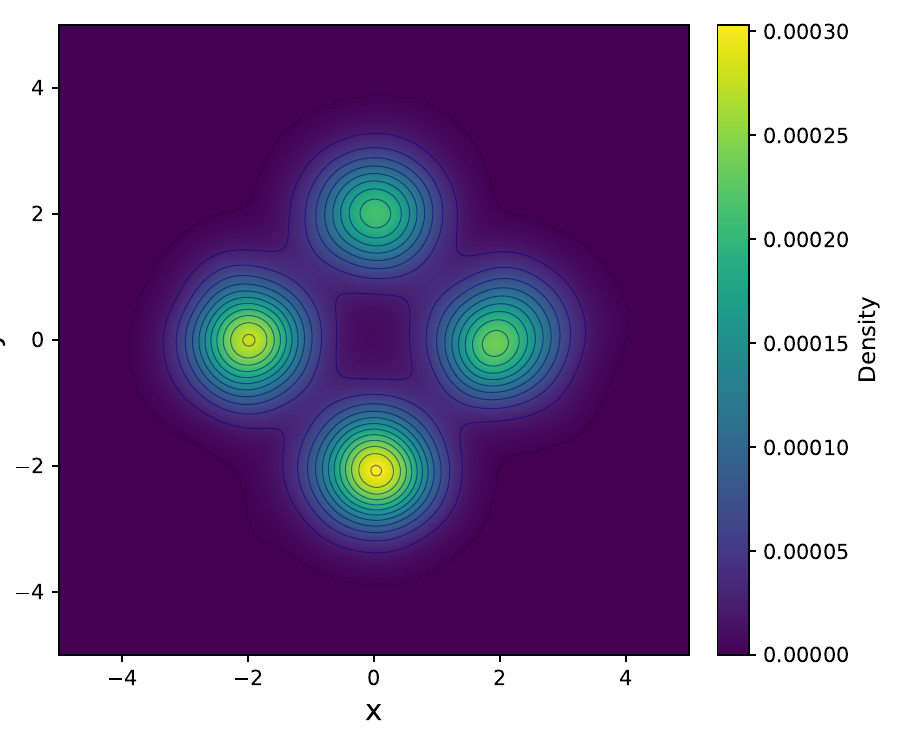}
        \caption{}
        \label{fig:first_order_four_peaks}
    \end{subfigure}
    \caption{
        First-order nonlocal gradient flow sampling results (KDE) on three target distributions 
        with $\alpha=1.0$. (a) Single ring, (b) banana, (c) four-Gaussian-mixture. 
    }
    \label{fig:first_order_results}
\end{figure}
Under the reported finite computational budget, Figure~\ref{fig:first_order_results} and Table~\ref{tab:mode_occupancy} provide a qualitative comparison with the first-order nonlocal flow. The difference motivates examining the role of damping in the second-order dynamics. A natural question then arises: what causes this performance degradation? A key insight is that the second-order dynamics reduces to the first-order flow in the overdamped limit \(\gamma \to \infty\), where the inertial term becomes negligible.

To examine this connection, we fix \(\alpha = 1.0\) on the banana distribution and sweep the damping coefficient \(\gamma\in\{0.1,0.5,1.0,2.0,5.0,10.0,100.0\}\).
Figure~\ref{fig:gamma_scan_KL} illustrates the damping dependence of the second-order nonlocal flow. Consistent with the linearized rate analysis, the fastest decay occurs near \(\gamma^*=2\sqrt{\lambda_1}\), while excessively large damping leads to a strongly overdamped, first-order-like regime with  \(r(\gamma)\sim\lambda_1/\gamma\).

The comparison above was performed for a fixed value of \(\alpha\).
However, \(\alpha\) controls the tail of the L{\'e}vy kernel and hence the
long-range connectivity of the graph. Therefore, the relative advantage
of the second-order flow may change with \(\alpha\). To isolate this
effect, we next compute the linearized spectral gap and compare the
first- and second-order settling times.

\subsubsection{Spectral and Damping Analysis}
\label{subsubsec:mixing_time_verification}

To illustrate the spectral picture from Section~\ref{sec:convergence}, we
compute the spectral gap of the linearized graph operator and compare the
resulting linearized modal mixing-time predictions for the
first- and second-order dynamics.

\textbf{Setup.} We use the same graphs as in the preceding experiments,
with the banana distribution as the target.
For each \(\alpha\in\{0.1,0.5,1.0,1.9\}\), we form the weighted graph Laplacian \(L_\alpha\) with weights \(w_{ij}^{(\alpha)}=\theta(\pi_i,\pi_j)\nu_{ij}^{(\alpha)}\). Since the theoretical linearized operator is \(A_\alpha=D^{-1}L_\alpha\), self-adjoint in the \(D\)-weighted inner product, we compute its spectrum through the symmetric similar matrix
\(
B_\alpha=D^{-1/2}L_\alpha D^{-1/2}.
\)
The spectral gap \(\lambda_1\) is the smallest positive eigenvalue of \(B_\alpha\).
Equivalently,
\[
\lambda_1(\alpha)=
\min_{\substack{
		v\in\mathbb R^N\setminus\{0\}\\
		\sum_{i\in\mathcal V}\pi_i v_i=0}}
\frac{
	\displaystyle
	\sum_{\{i,j\}\in\mathcal E}
	\theta(\pi_i,\pi_j)\nu_{ij}^{(\alpha)}(v_i-v_j)^2
}{
	\displaystyle
	\sum_{i\in\mathcal V}\pi_i v_i^2
}.
\]
The curves below are computed from this sparse-graph spectrum using
the linearized slowest mode with normalized initial displacement and zero initial
momentum:
\[
R_1(t)=e^{-\lambda_1t},\qquad
R_2(t)=(1+\sqrt{\lambda_1}t)e^{-\sqrt{\lambda_1}t},
\]
where the second expression corresponds to critical damping
$\gamma=2\sqrt{\lambda_1}$.

For the mixing-time comparison, we use the normalized
error $R(t)=
\frac{\|\rho_t-\pi\|_{\pi^{-1}}}
{\|\rho_0-\pi\|_{\pi^{-1}}}$
and use the settling-time definition in
Corollary~\ref{cor:acceleration}, with $\varepsilon=10^{-3}$.
For data observed only up to $T_{\max}$, its finite-horizon counterpart is
\[
T_{\mathrm{mix}}^{(T_{\max})}(\varepsilon)
=\inf\{t\in[0,T_{\max}]:R(s)\le\varepsilon
\ \text{for every }s\in[t,T_{\max}]\},
\]
with $\inf\varnothing=+\infty$. This definition describes settling
within the observation interval. 

\begin{figure}[t]
	\centering
	
	\begin{subfigure}[b]{0.31\textwidth}
		\centering
		\includegraphics[height=3.25cm,keepaspectratio]
{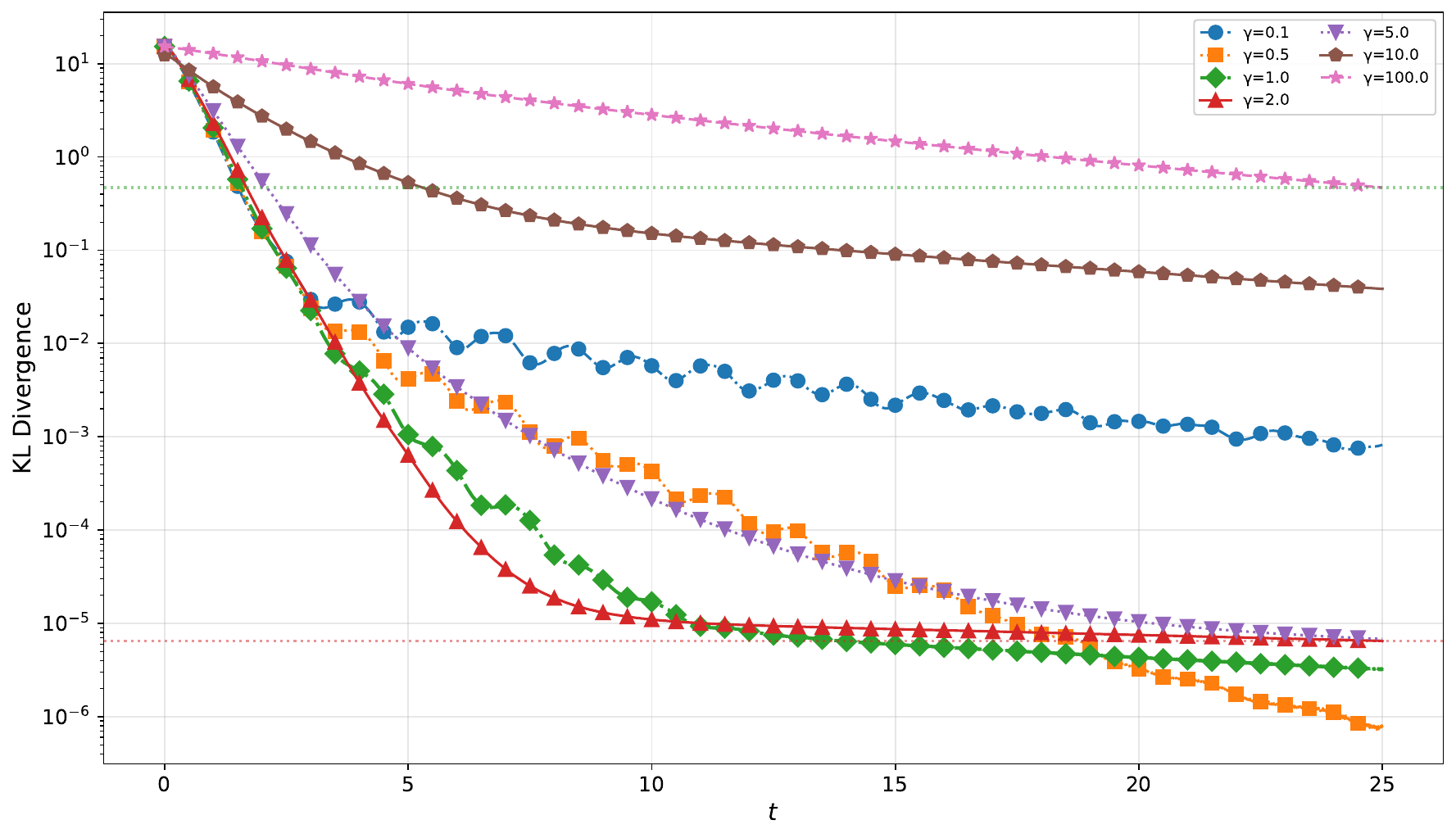}
		\caption{Damping dependence at \(\alpha=1.0\).}
		\label{fig:gamma_scan_KL}
	\end{subfigure}
	\hfill
	\begin{subfigure}[b]{0.31\textwidth}
		\centering
		\includegraphics[height=3.9cm,keepaspectratio]
		{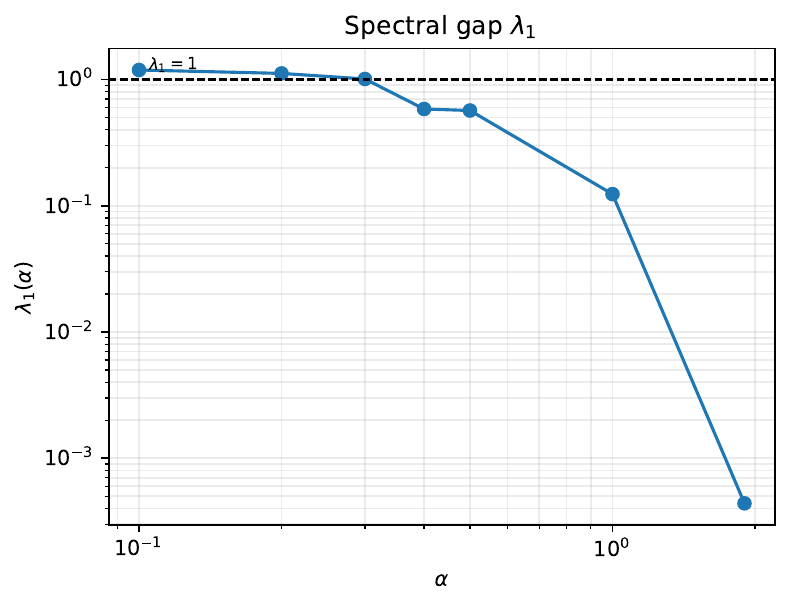}
		\caption{Spectral gap.}
		\label{fig:lambda1_alpha}
	\end{subfigure}
	\hfill
	\begin{subfigure}[b]{0.31\textwidth}
		\centering
		\includegraphics[height=3.9cm,keepaspectratio]
		{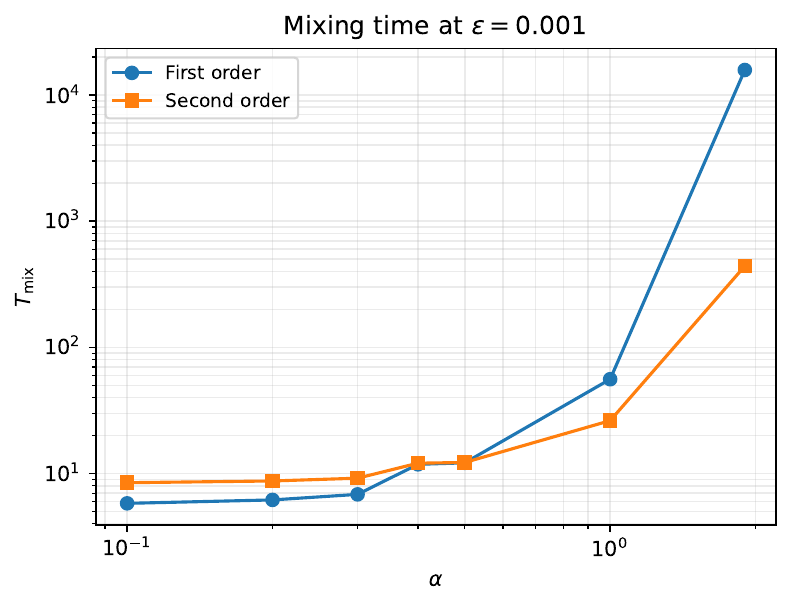}
		\caption{Predicted mixing time.}
		\label{fig:mixing_alpha_1e-3}
	\end{subfigure}
	
	\caption{
			Spectral and damping analysis of the nonlocal flow.
			(a) KL divergence of the second-order dynamics for different
			damping coefficients at \(\alpha=1.0\).
			(b) Spectral gap \(\lambda_1\) computed from the realized sparse graphs.
			(c) Corresponding linearized modal mixing-time predictions for the
			first- and second-order flows with \(\varepsilon=10^{-3}\).
	}
	\label{fig:spectral_damping_summary}
\end{figure}
\begin{figure}[t]
    \centering
    \begin{subfigure}[b]{0.24\linewidth}
        \centering
        \includegraphics[width=\linewidth]{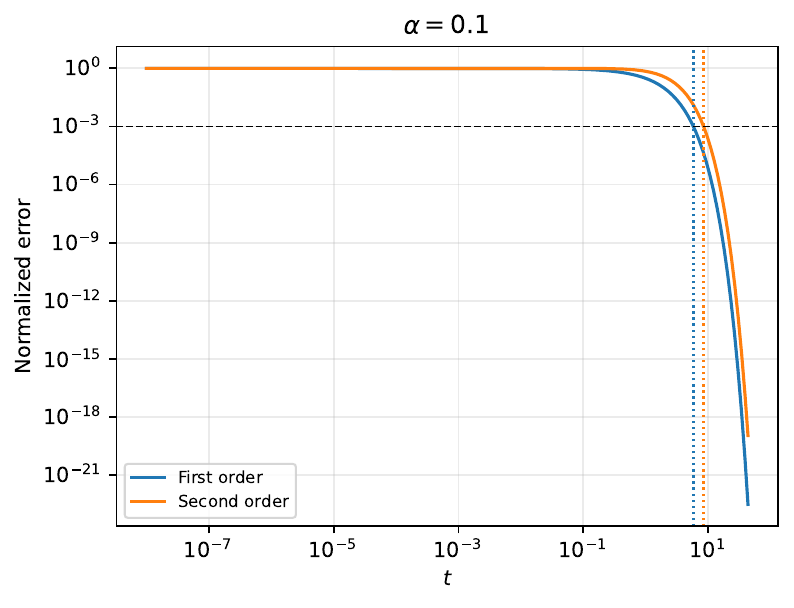}
        \caption{\(\alpha=0.1\)}
        \label{fig:alpha_relaxation_0.1}
    \end{subfigure}
    \hfill
    \begin{subfigure}[b]{0.24\linewidth}
        \centering
        \includegraphics[width=\linewidth]{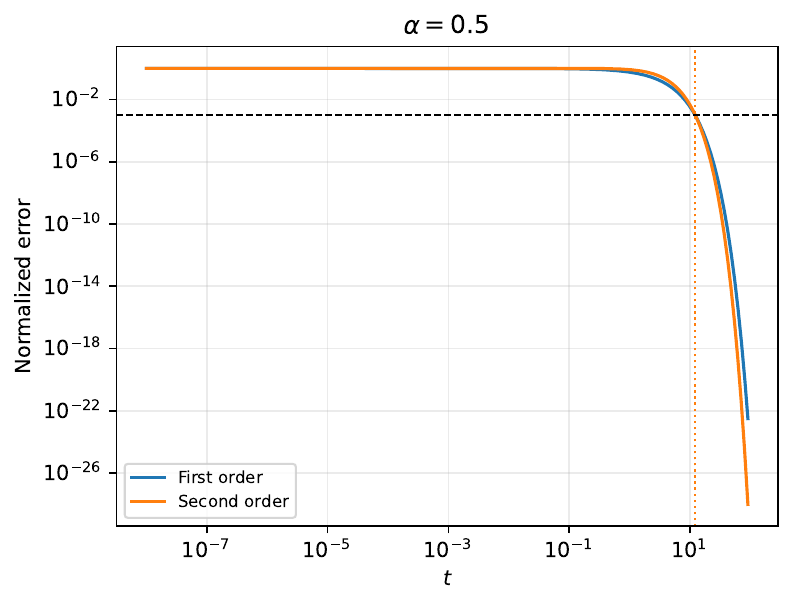}
        \caption{\(\alpha=0.5\)}
        \label{fig:alpha_relaxation_0.5}
    \end{subfigure}
    \hfill
    \begin{subfigure}[b]{0.24\linewidth}
        \centering
        \includegraphics[width=\linewidth]{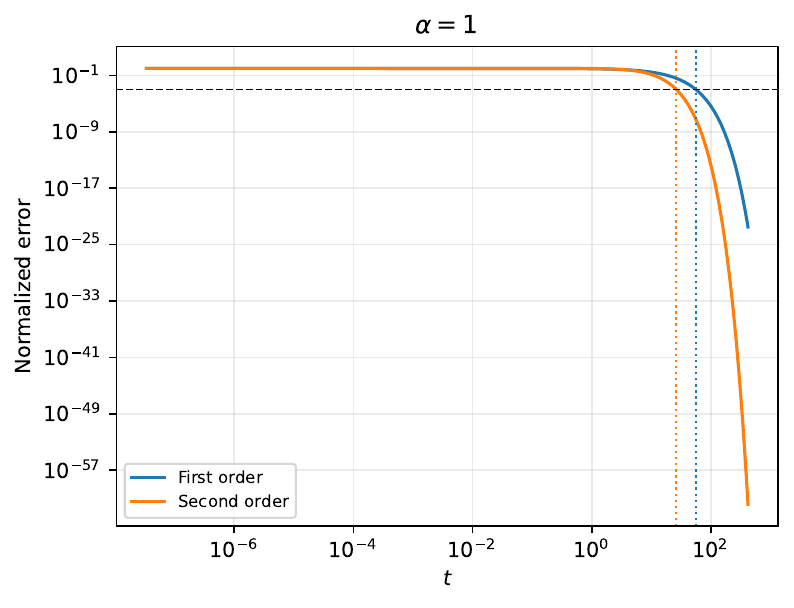}
        \caption{\(\alpha=1.0\)}
        \label{fig:alpha_relaxation_1.0}
    \end{subfigure}
    \hfill
    \begin{subfigure}[b]{0.24\linewidth}
        \centering
        \includegraphics[width=\linewidth]{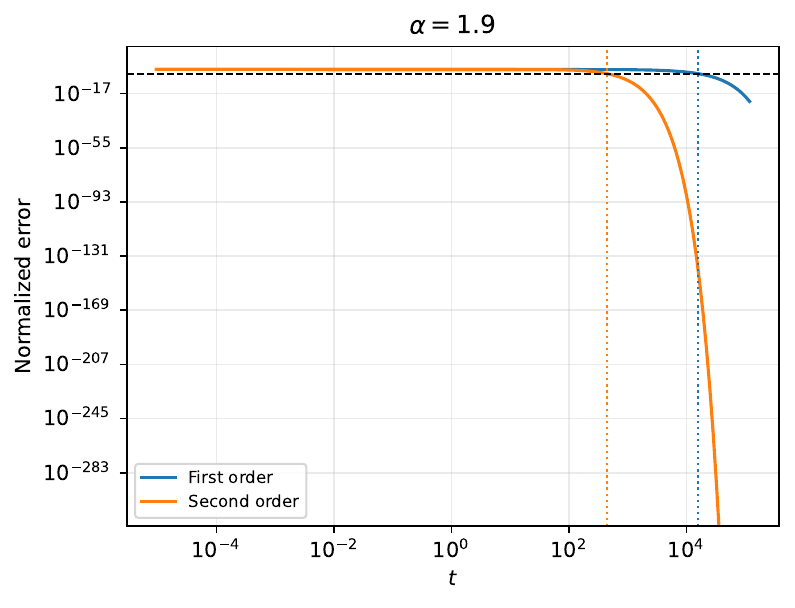}
        \caption{\(\alpha=1.9\)}
        \label{fig:alpha_relaxation_1.9}
    \end{subfigure}
    \caption{Linearized modal relaxation predictions obtained from
    the actual sparse-graph spectral gaps for
    \(\alpha=0.1,0.5,1.0,1.9\). The tolerance is
    \(\varepsilon=10^{-3}\), and vertical dotted lines denote the predicted
  settling times.}
    \label{fig:alpha_relaxation}
\end{figure}

Figure~\ref{fig:lambda1_alpha} shows that, for the present
discretization, \(\lambda_1\) increases as \(\alpha\) decreases,
consistent with the stronger long-range connectivity induced by smaller
\(\alpha\). Panel (c) shows the mixing-time trends predicted by the
linearized modal formulas, with the relative
asymptotic rates governed by the ratio \(1/\sqrt{\lambda_1}\).

Figure~\ref{fig:alpha_relaxation} shows the corresponding normalized
linearized modal relaxation predictions for four values of \(\alpha\). For smaller $\alpha$, the present discretization yields
$\lambda_1>1$ under the chosen time normalization, and the
first-order curve reaches the prescribed tolerance earlier. For larger $\alpha$, the spectral gap falls below unity, and
the second-order curve reaches the tolerance earlier,
consistent with the asymptotic rate comparison.

In the present discretization, stronger nonlocality is associated with a
larger spectral gap \(\lambda_1\), which accelerates the first-order
relaxation but reduces the relative asymptotic advantage of inertia.

(i) The L{\'e}vy exponent \(\alpha\) controls the long-range connectivity
of the graph and is reflected in the linearized spectral gap
\(\lambda_1\). Numerically, smaller \(\alpha\) is associated with a larger
\(\lambda_1\), consistent with the stronger long-range connectivity
induced by the heavier-tailed interaction kernel.

(ii) The linearized first-order and critically damped second-order
dynamics have asymptotic amplitude decay rates $\lambda_1$ and
$\sqrt{\lambda_1}$, respectively. Thus, under the present time
normalization, the second-order dynamics are asymptotically faster
when $\lambda_1<1$, with the leading-order mixing-time ratio
\(
\frac{T^{(1)}_{\mathrm{mix}}}{T^{(2)}_{\mathrm{mix}}}\sim\lambda_1^{-1/2}.
\)

(iii) In the present experiments, stronger nonlocality is associated
with a larger spectral gap and hence a smaller relative asymptotic
advantage of the second-order dynamics.
These observations are specific to the present discretization and
normalization.

\subsection{Comparison with Other Samplers}

In this section, we compare the proposed method with baseline samplers, including MCMC, MALA, and ULD, on three benchmark distributions. These targets allow us to evaluate the method's ability to capture manifold structures, traverse curved geometries, and explore isolated modes.

\begin{figure}[htbp]
	\centering
	\begin{subfigure}[b]{0.18\textwidth}
		\centering
		\includegraphics[width=\textwidth]{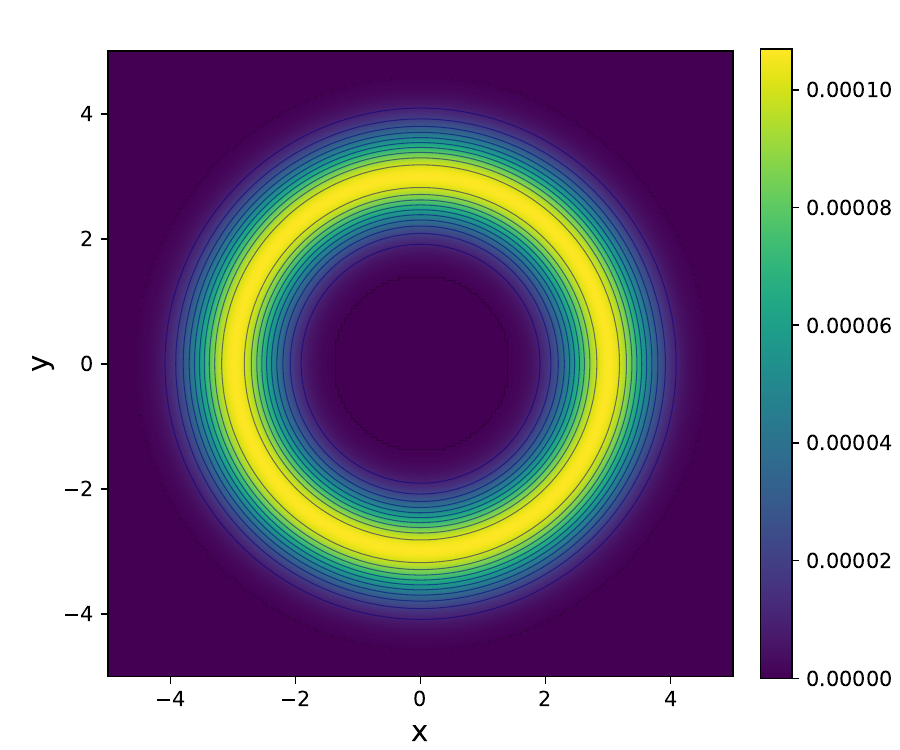}
		\caption{Target}
	\end{subfigure}
	\hfill
	\begin{subfigure}[b]{0.18\textwidth}
		\centering
		\includegraphics[width=\textwidth]{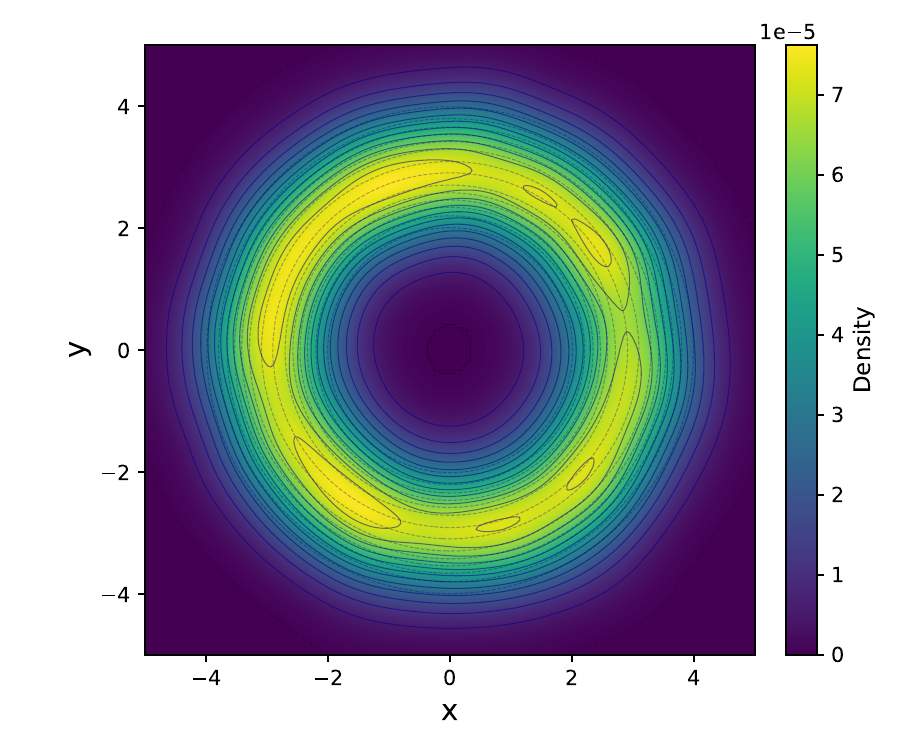}
		\caption{Our method.}
	\end{subfigure}
	\hfill
	\begin{subfigure}[b]{0.18\textwidth}
		\centering
		\includegraphics[width=\textwidth]{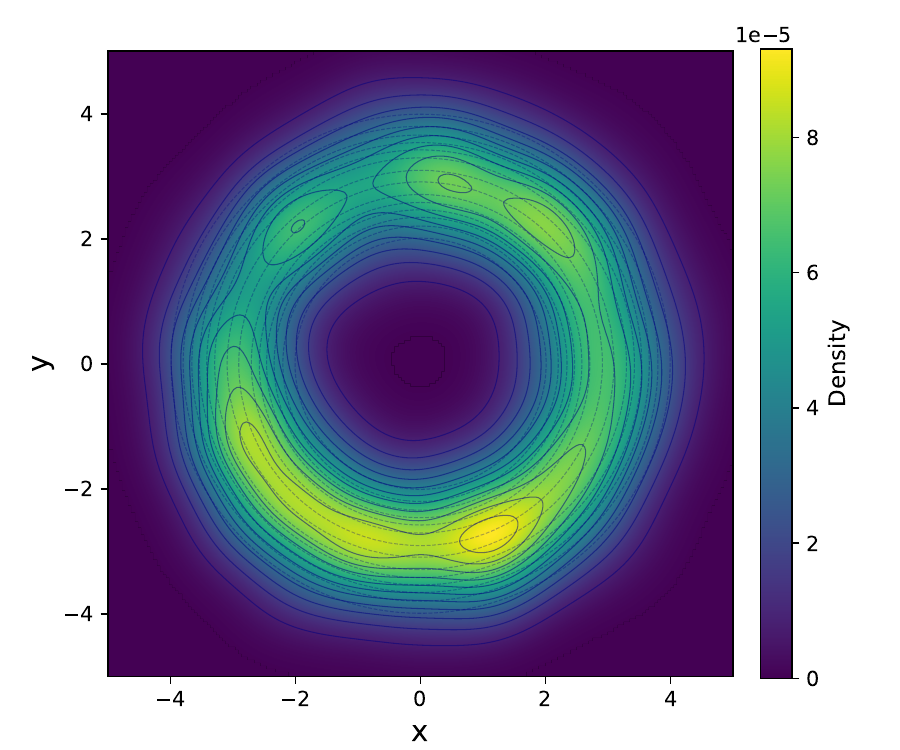}
		\caption{MALA}
	\end{subfigure}
	\hfill
	\begin{subfigure}[b]{0.18\textwidth}
		\centering
		\includegraphics[width=\textwidth]{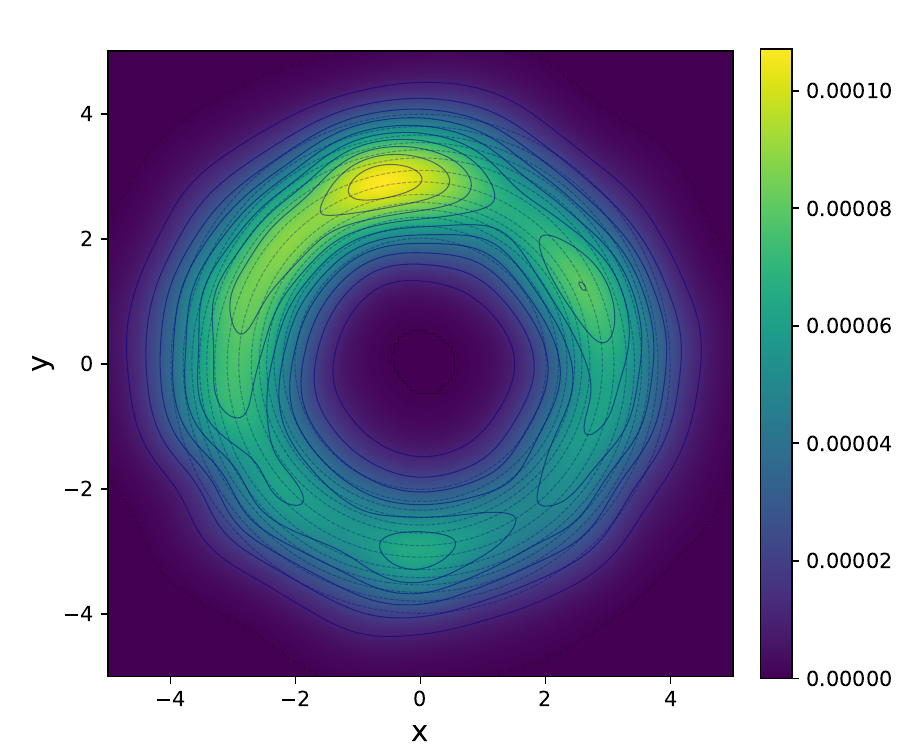}
		\caption{MCMC}
	\end{subfigure}
	\hfill
	\begin{subfigure}[b]{0.18\textwidth}
		\centering
		\includegraphics[width=\textwidth]{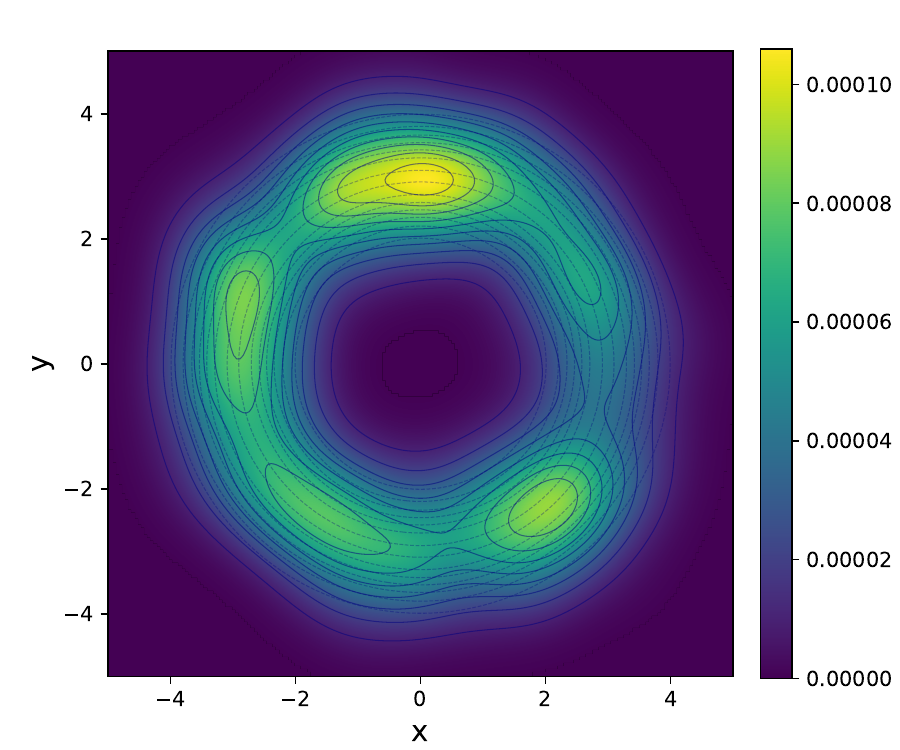}
		\caption{ULD}
	\end{subfigure}
	\caption{Sampling results (KDE) on the single ring distribution $\pi \propto e^{-2(r-3)^2}$. The proposed method ($\alpha=1.0$) provides broader coverage than the tested baselines in this experiment.}
	\label{fig:single_ring_comparison}
\end{figure}

\begin{figure}[htbp]
	\centering
	\begin{subfigure}[b]{0.18\textwidth}
		\centering
		\includegraphics[width=\textwidth]{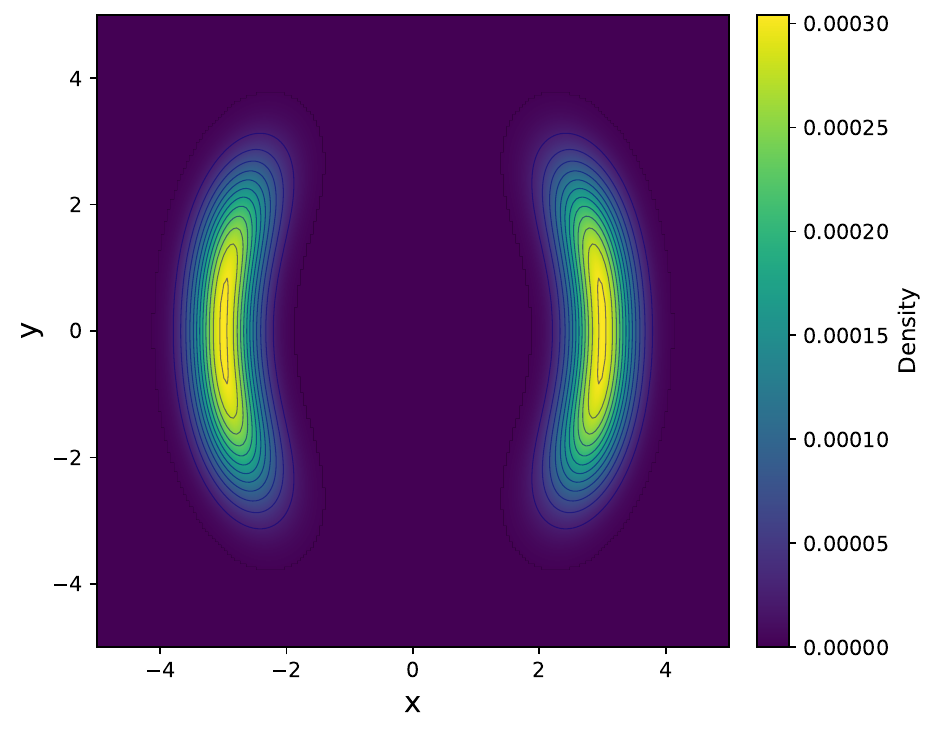}
		\caption{Target}
	\end{subfigure}
	\hfill
	\begin{subfigure}[b]{0.18\textwidth}
		\centering
		\includegraphics[width=\textwidth]{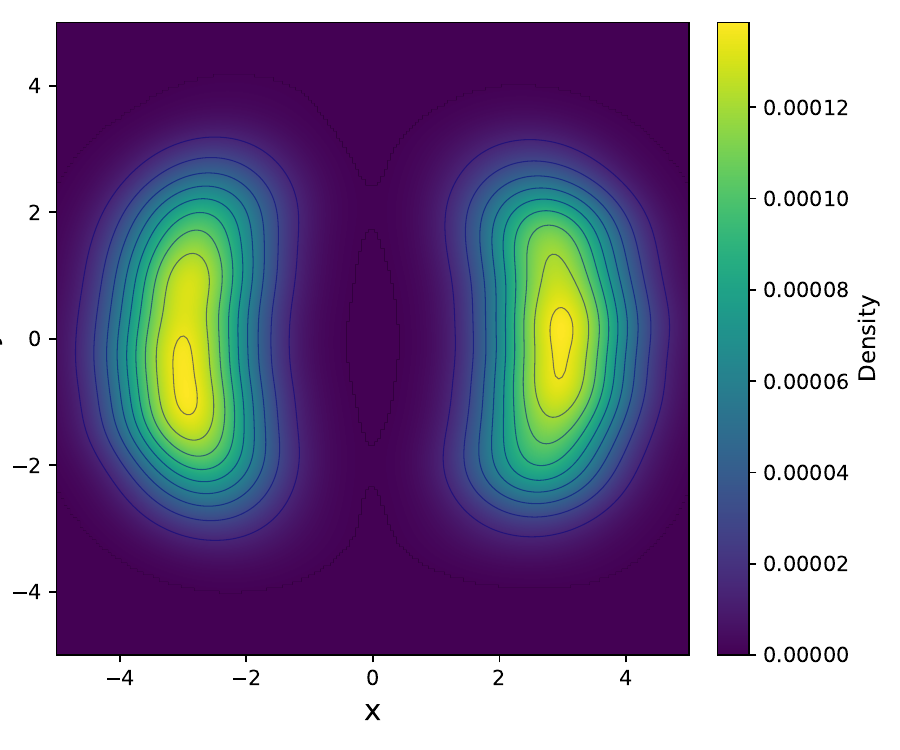}
		\caption{Our method.}
	\end{subfigure}
	\hfill
	\begin{subfigure}[b]{0.18\textwidth}
		\centering
		\includegraphics[width=\textwidth]{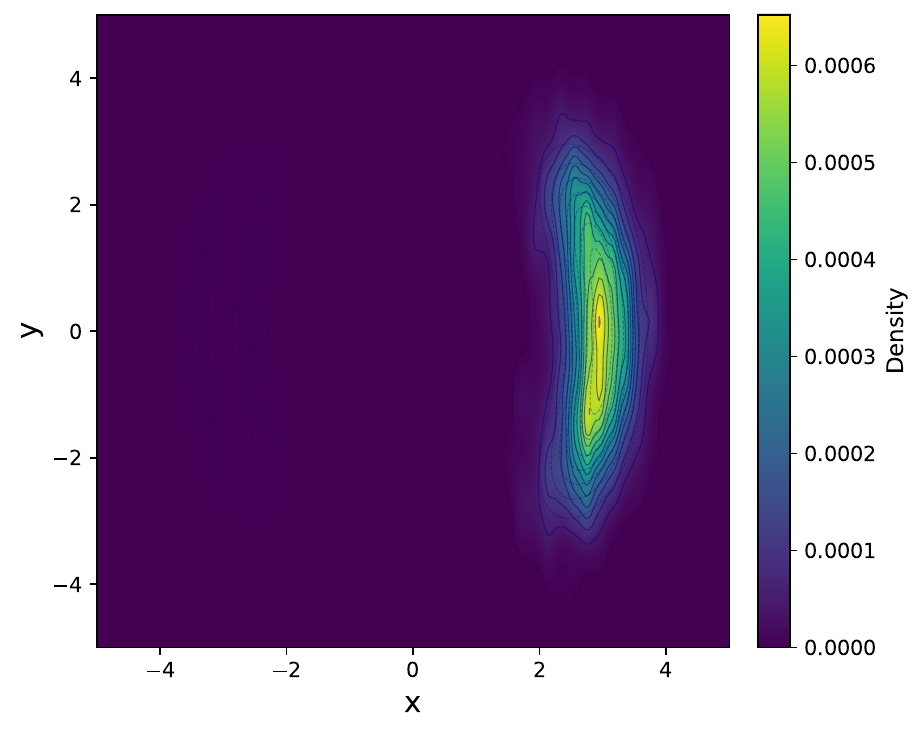}
		\caption{MALA}
	\end{subfigure}
	\hfill
	\begin{subfigure}[b]{0.18\textwidth}
		\centering
		\includegraphics[width=\textwidth]{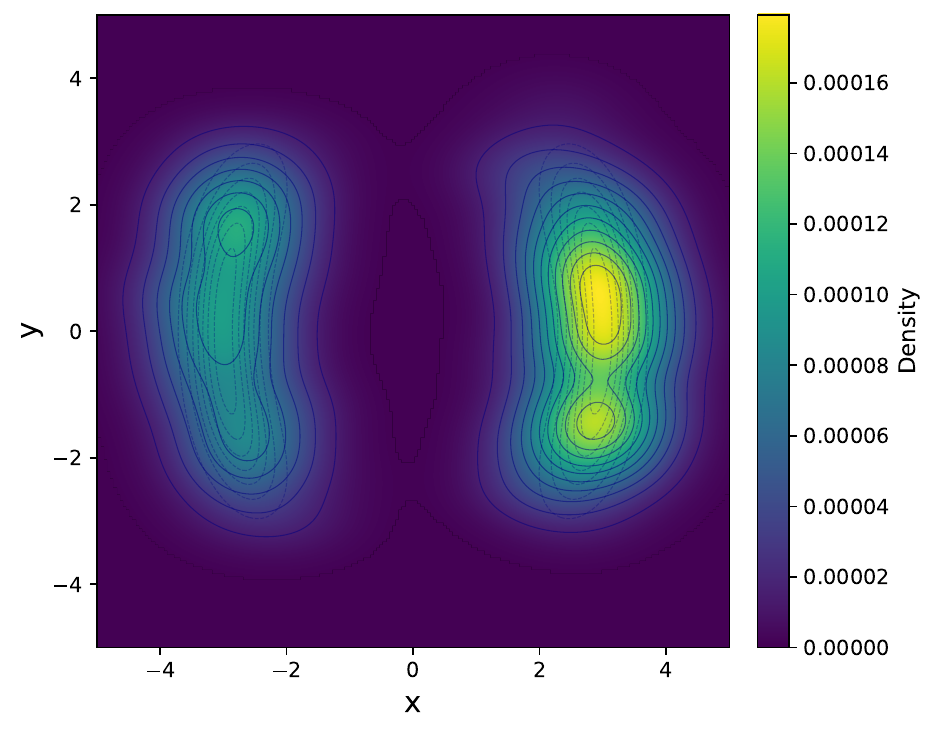}
		\caption{MCMC}
	\end{subfigure}
	\hfill
	\begin{subfigure}[b]{0.18\textwidth}
		\centering
		\includegraphics[width=\textwidth]{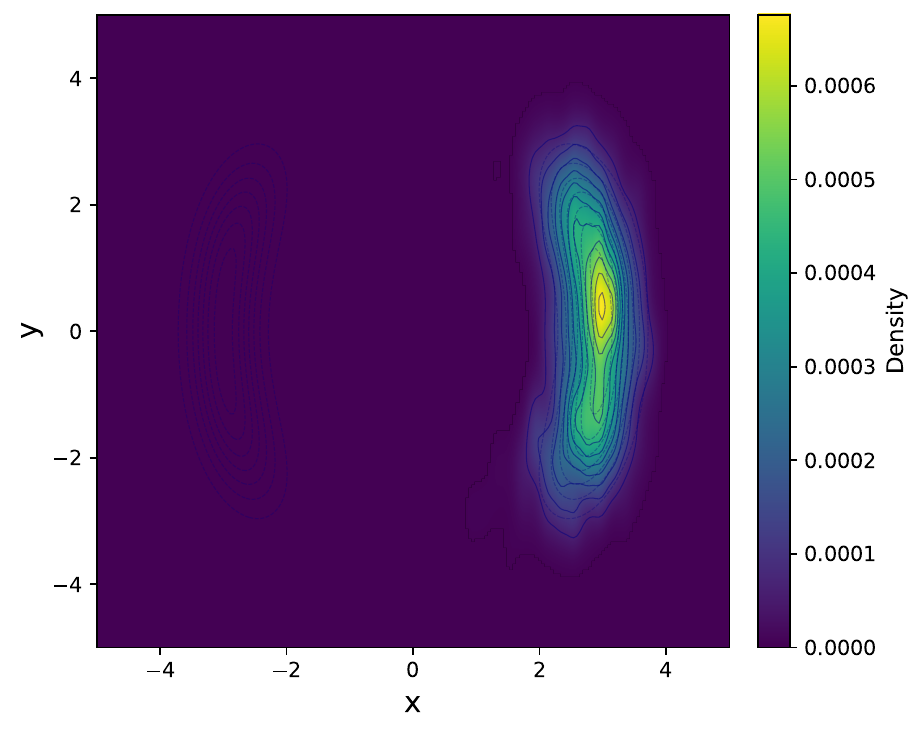}
		\caption{ULD}
	\end{subfigure}
	
	\caption{ Sampling results (KDE) on the Banana distribution. The proposed method $(\alpha=1.0)$ provides broader coverage than the tested baselines in this experiment.}
	\label{fig:banana_comparison}
\end{figure}

\begin{figure}[htbp]
	\centering
	\begin{subfigure}[b]{0.18\textwidth}
		\centering
		\includegraphics[width=\textwidth]{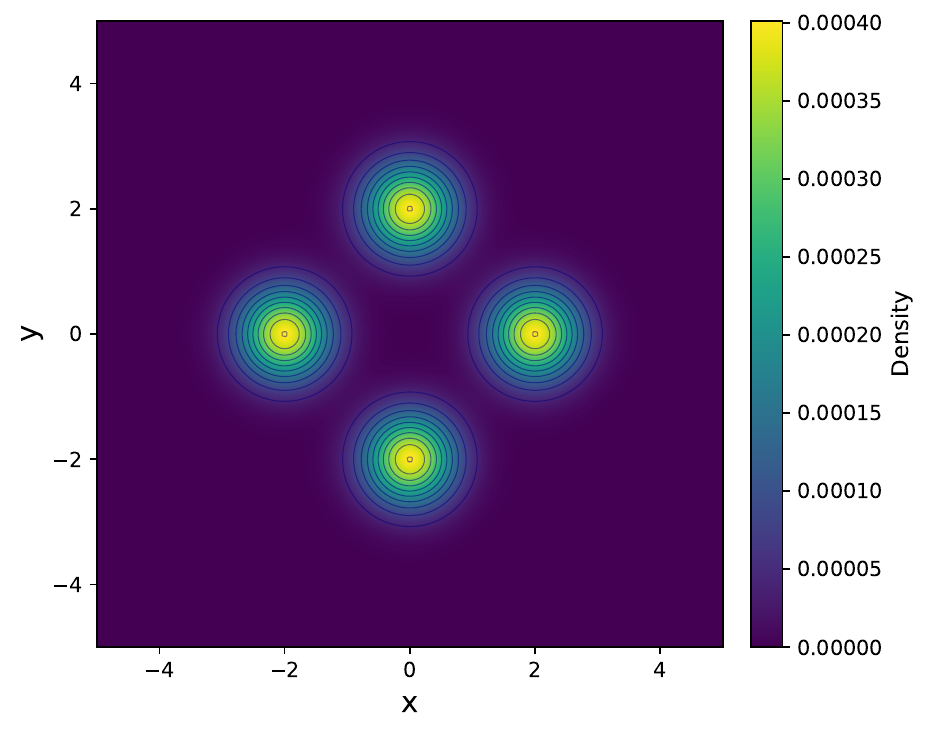}
		\caption{Target}
	\end{subfigure}
	\hfill
	\begin{subfigure}[b]{0.18\textwidth}
		\centering
		\includegraphics[width=\textwidth]{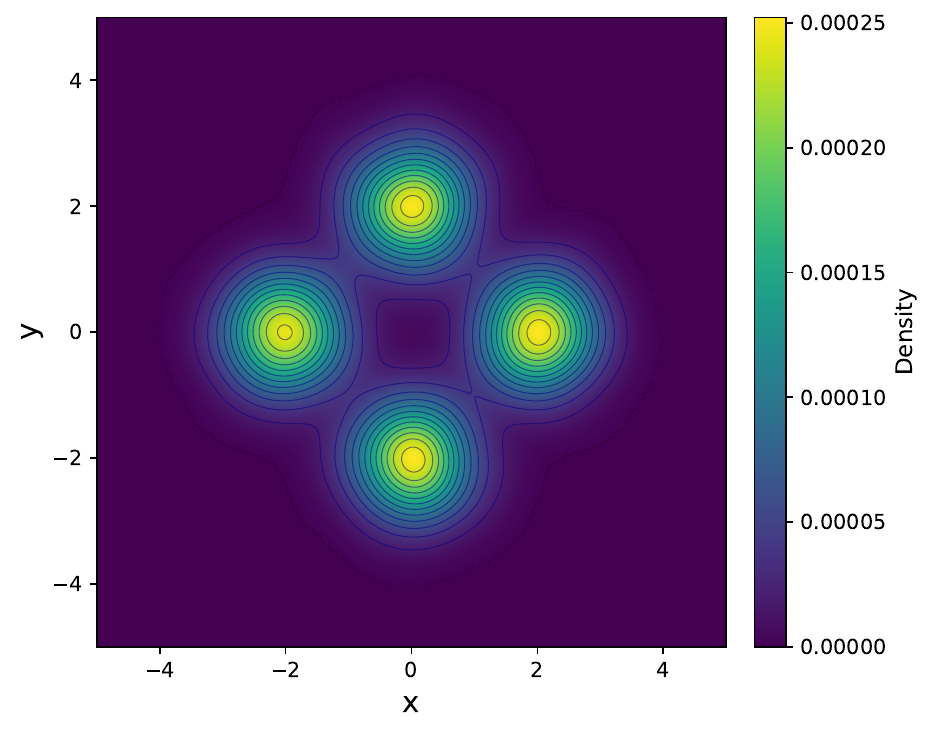}
		\caption{Our method.}
	\end{subfigure}
	\hfill
	\begin{subfigure}[b]{0.18\textwidth}
		\centering
		\includegraphics[width=\textwidth]{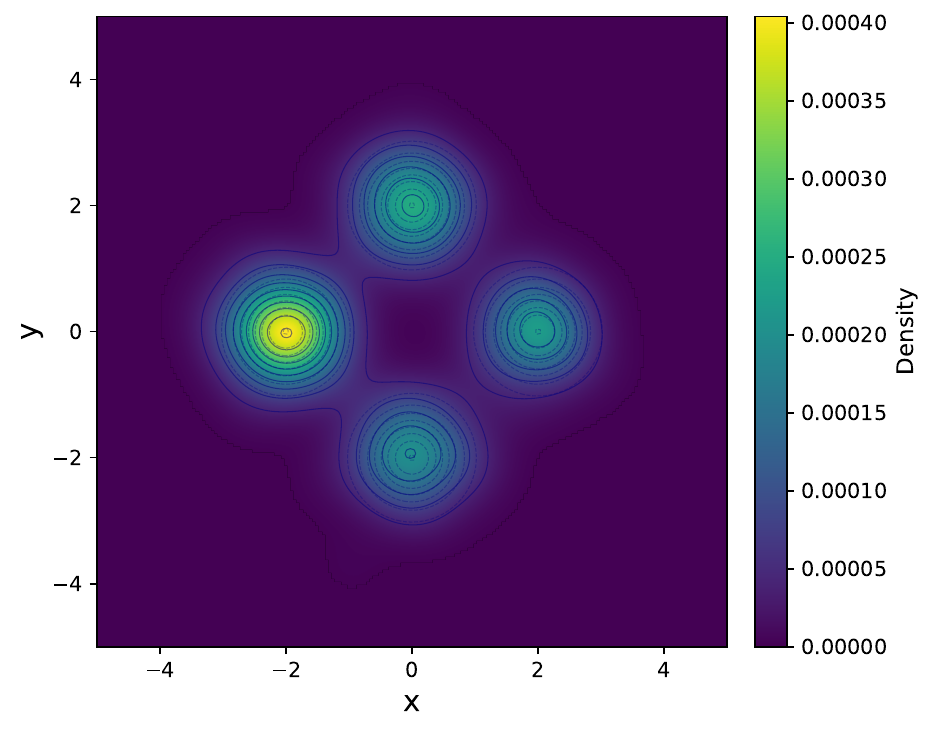}
		\caption{MALA}
	\end{subfigure}
	\hfill
	\begin{subfigure}[b]{0.18\textwidth}
		\centering
		\includegraphics[width=\textwidth]{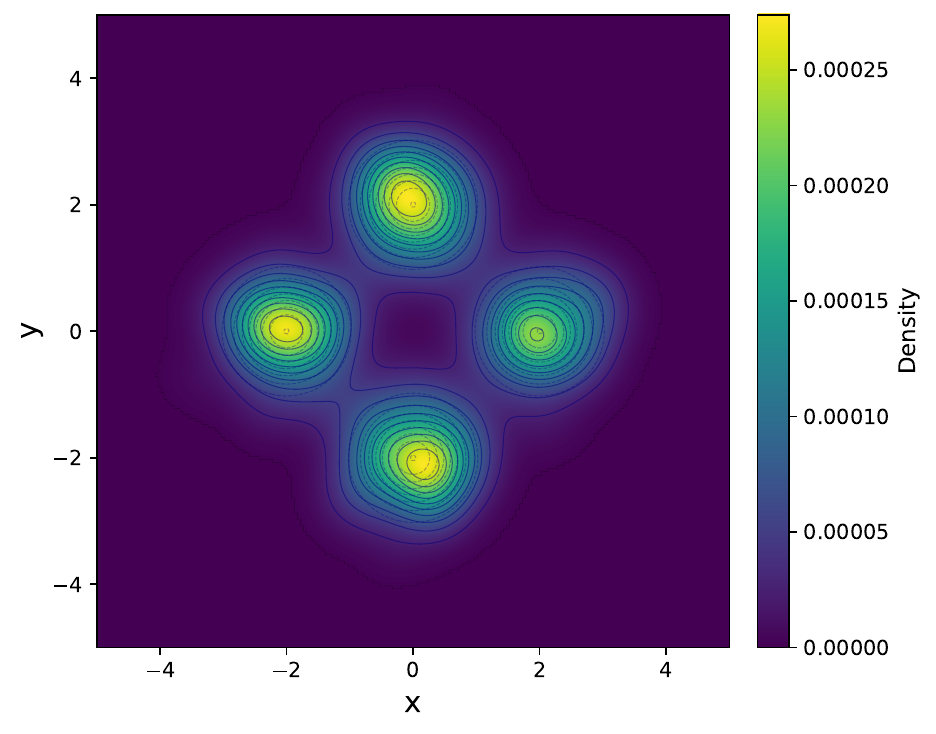}
		\caption{MCMC}
	\end{subfigure}
	\hfill
	\begin{subfigure}[b]{0.18\textwidth}
		\centering
		\includegraphics[width=\textwidth]{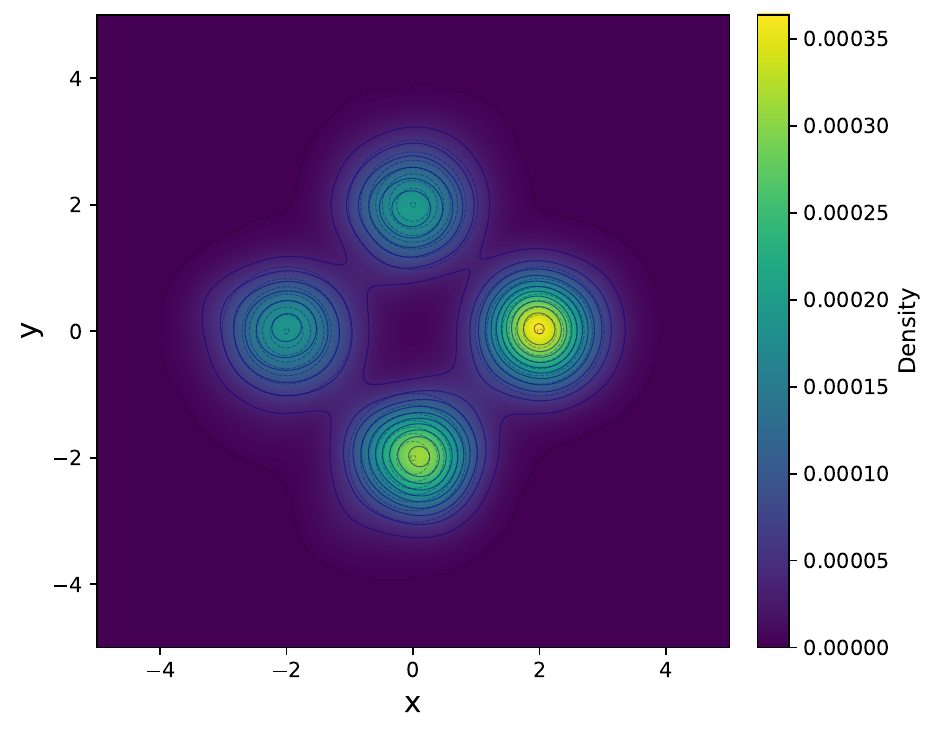}
		\caption{ULD}
	\end{subfigure}
	\caption{Sampling results (KDE) on the four-Gaussian-mixture distribution $\pi = \frac{1}{4}\sum \mathcal{N}(\mu_i, 0.25I_2)$. The proposed method $(\alpha=1.0)$ provides broader coverage than the tested baselines in this experiment.}
	\label{fig:four_peaks_comparison}
\end{figure}

Under the reported computational budget, our method achieves better mode coverage than the tested baselines across the three test distributions. On the single ring target (Figure~\ref{fig:single_ring_comparison}), 
MALA and ULD remain trapped in local regions, while our method uniformly covers the entire manifold. The Banana distribution (Figure~\ref{fig:banana_comparison}) reveals a similar pattern: the baselines fail to capture the curved tails, whereas our method recovers the full shape. The most significant improvement appears on the four-Gaussian-mixture (Figure~\ref{fig:four_peaks_comparison}), where our method explores all four modes with balanced weights, in contrast to the mode collapse or imbalanced coverage observed in the baselines.

\paragraph{Equal-wall-clock comparison.}
To complement the per-step complexity analysis, we compare the methods
	under a common wall-clock budget on the four-Gaussian-mixture target.
	This experiment measures end-to-end computational efficiency, including
	the one-time sparse-graph construction cost of the proposed method.	
\begin{figure}[t]
	\centering
	\begin{subfigure}[b]{0.48\textwidth}
		\centering
		\includegraphics[width=\textwidth]{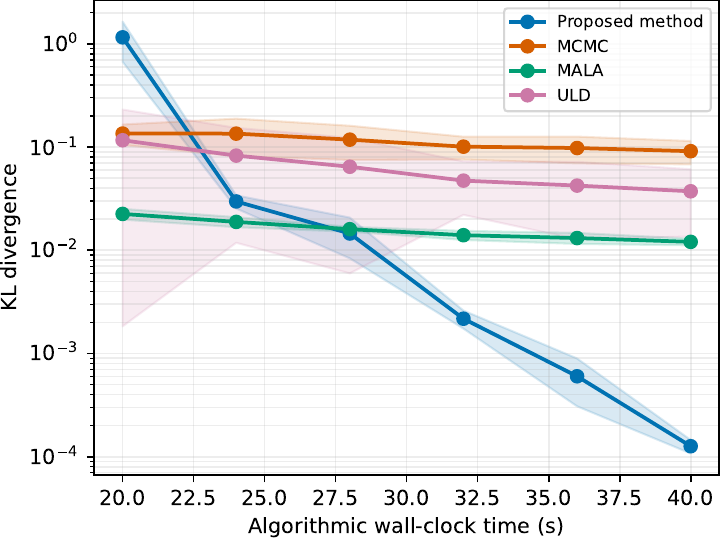}
		\caption{KL divergence.}
		\label{fig:wallclock_kl}
	\end{subfigure}
	\hfill
	\begin{subfigure}[b]{0.48\textwidth}
		\centering
		\includegraphics[width=\textwidth]{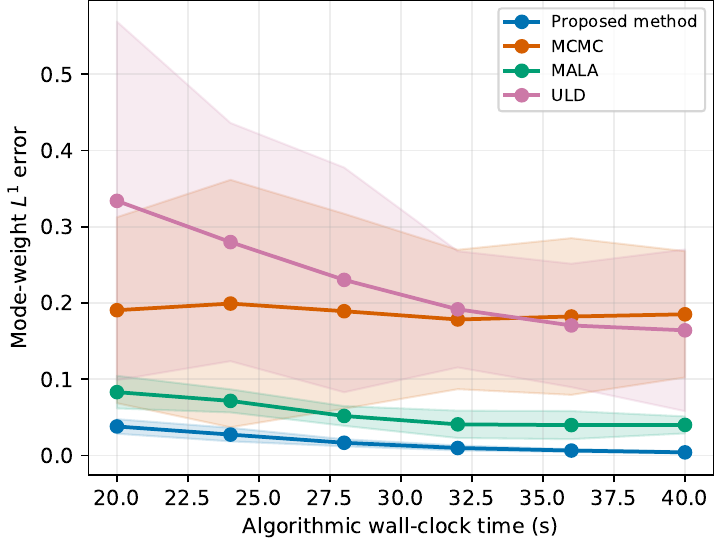}
		\caption{Mode-weight error.}
		\label{fig:wallclock_mode}
	\end{subfigure}
	\caption{
			Equal-wall-clock comparison on the four-Gaussian-mixture target.
			All methods use a \(40\)-s computational budget; curves show the mean
			over five independent seeds and shaded regions denote one standard
			deviation. The proposed-method timing includes its one-time graph setup,
			while the particle-baseline timing includes warm-up.
			(a) KL divergence. (b) Mode-weight \(L^1\) error.
		}
	\label{fig:wallclock_comparison}
\end{figure}

For the wall-clock comparison, all methods are run under the same
	\(40\)-s computational budget, with results averaged over five
	independent seeds. For the proposed method, the reported time includes
	sparse-graph construction and subsequent density evolution, whereas
	for the particle baselines it includes both warm-up and production.
	Detailed timing and implementation settings are provided in
	Appendix~\ref{app:numerical_details}.

Figure~\ref{fig:wallclock_comparison} shows the end-to-end wall-clock
comparison on the four-Gaussian-mixture target. The proposed method
incurs a one-time sparse-graph construction cost, which leads to weaker
initial wall-clock performance. As this setup cost is amortized, its
performance improves relative to the tested particle samplers within
the common computational budget. By the end of the \(40\)-s run, the
proposed method exhibits favorable KL and mode-weight-error trends
among the compared methods.

\subsubsection{Mode Exploration and Collapse Prevention}

In this subsection, we quantify the mode coverage of the proposed method on two multimodal distributions: four-Gaussian-mixture (four modes) and banana (two wells). The target occupancy ratios are $25\%$ per mode and $50\%$ per well, respectively.
We compute the occupancy ratios by averaging over the last 5,000 iterations of each run.

\begin{figure}[htbp]
\centering
\begin{subfigure}[b]{0.48\textwidth}
\centering
\includegraphics[width=\textwidth]{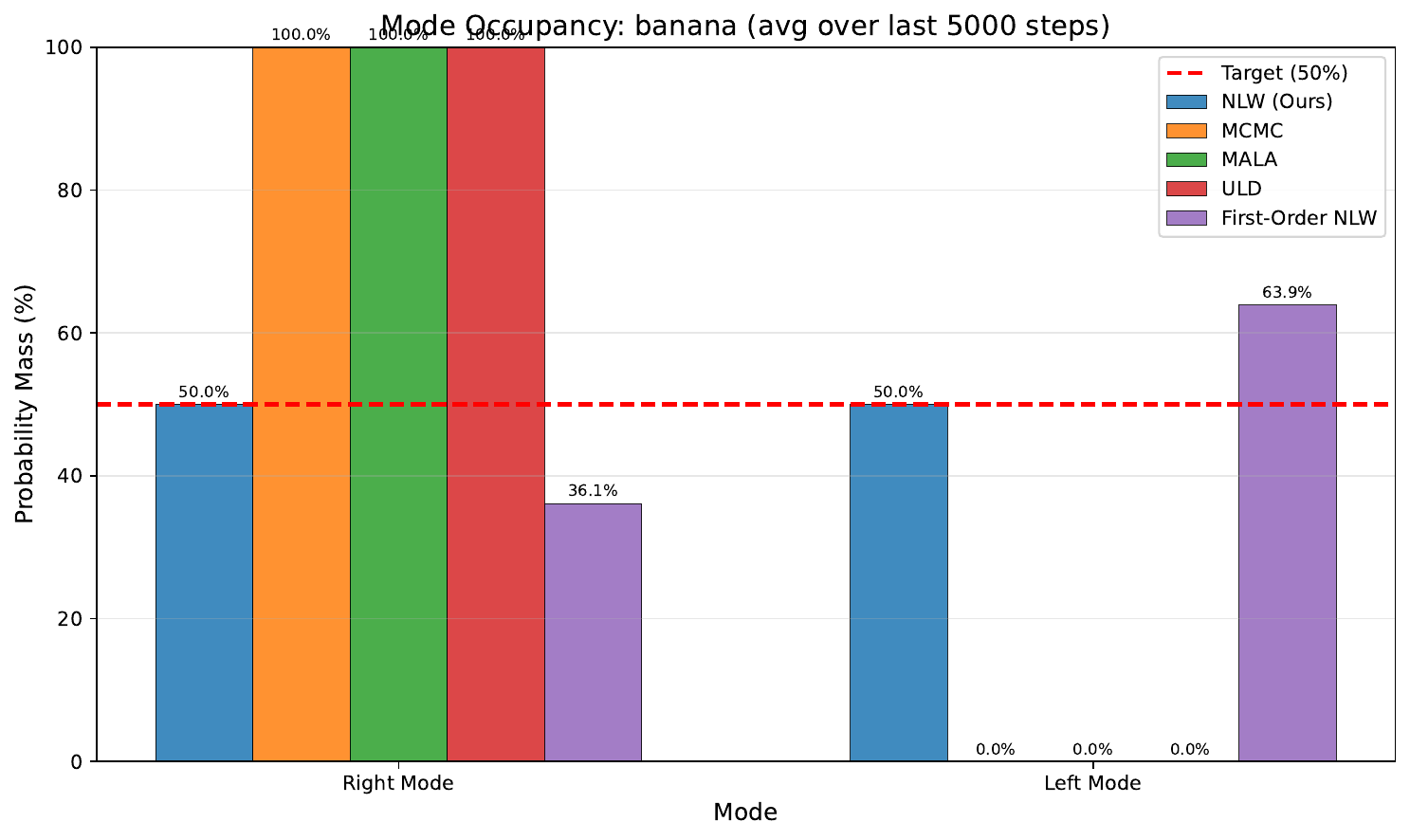}
\caption{Banana}
\label{fig:ring_double_well_occupancy}
\end{subfigure}
\hfill
\begin{subfigure}[b]{0.48\textwidth}
\centering
\includegraphics[width=\textwidth]{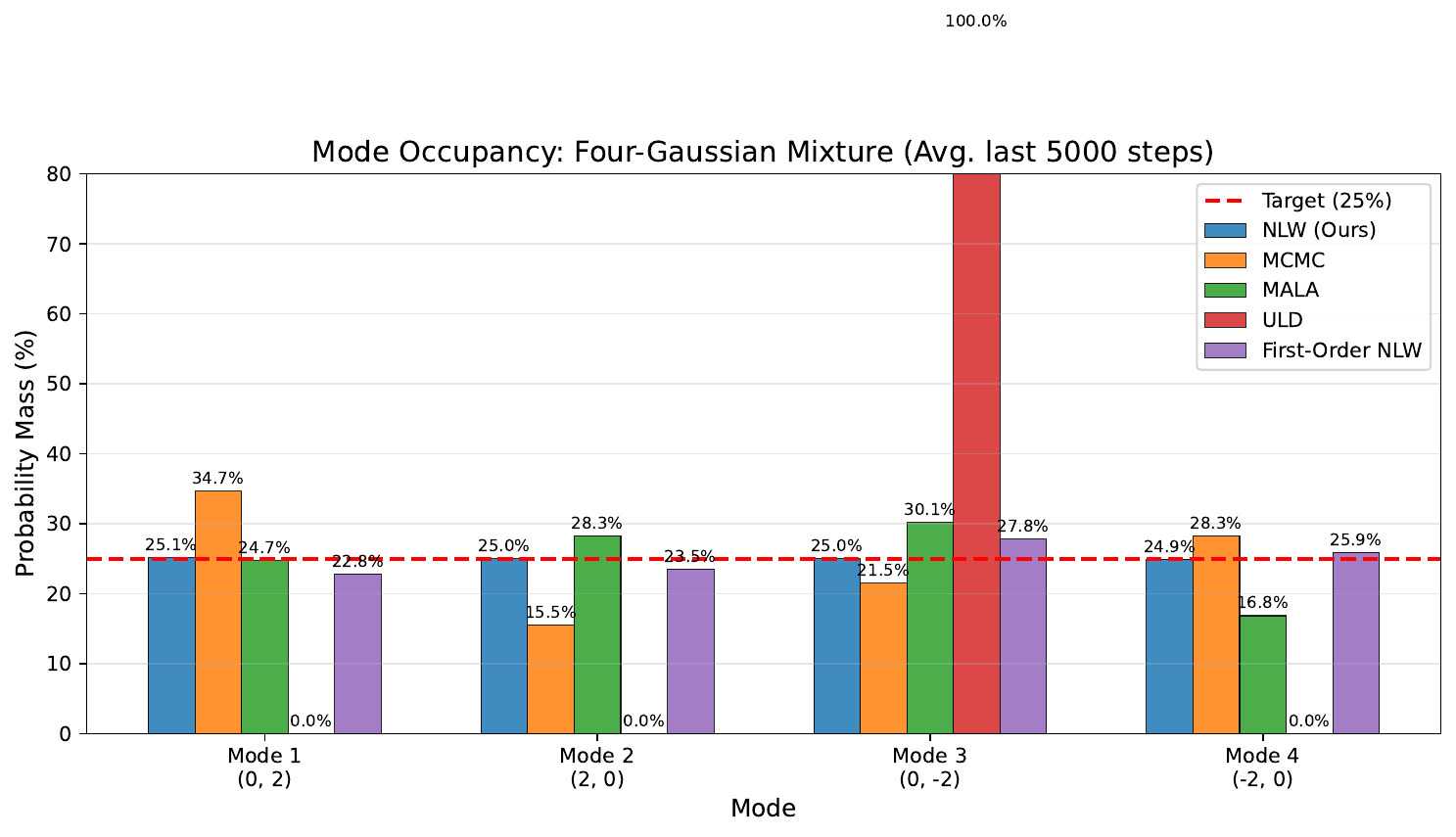}
\caption{Four-Gaussian-mixture}
\label{fig:four_peaks_occupancy}
\end{subfigure}
\caption{Mode occupancy comparison. Our method ($\alpha=1.0$) achieves balanced sampling in both cases.}
\label{fig:mode_occupancy_comparison}
\end{figure}

\begin{table}[htbp]
\centering
\caption{Mode occupancy ratios (\%) on Four-Gaussian-Mixture and banana.}
\label{tab:mode_occupancy}
\begin{tabular}{lcccccc}
\toprule
 & \multicolumn{4}{c}{Four-Gaussian-mixture}
 & \multicolumn{2}{c}{Banana} \\
\cmidrule{2-5} \cmidrule{6-7}
Method & M1 & M2 & M3 & M4 & Right & Left \\
\midrule
Target
& 25.0 & 25.0 & 25.0 & 25.0 & 50.0 & 50.0 \\

Proposed method ($\alpha=1.0$)
& 25.1 & 25.0 & 25.0 & 24.9 & 50.0 & 50.0 \\

MCMC
& 34.68 & 15.54 & 21.50 & 28.28 & 100.0 & 0.0 \\

MALA
& 24.74 & 28.28 & 30.14 & 16.84 & 100.0 & 0.0 \\

ULD
& 0.00 & 0.00 & 100.00 & 0.00 & 100.0 & 0.0 \\

First-order flow
& 22.8 & 23.5 & 27.8 & 25.9 & 36.1 & 63.9 \\
\bottomrule
\end{tabular}
\end{table}
As shown in Table~\ref{tab:mode_occupancy} and
Figure~\ref{fig:mode_occupancy_comparison}, the proposed method
maintains nearly balanced mode occupancy on both targets.
For the four-Gaussian-mixture, MCMC and MALA visit all four modes but
exhibit noticeable imbalance, whereas ULD remains concentrated in a
single mode over the reported occupancy window.
For the banana target, all three particle baselines remain in the same
well during the final \(5{,}000\) production samples, while the
first-order nonlocal flow visits both wells but with an imbalanced
occupancy. In contrast, the proposed second-order nonlocal flow achieves
approximately equal occupancy of the two wells. This aligns with the known difficulty of local gradient dynamics in crossing low-density barriers~\cite{arjovsky2017towards} and the tendency of likelihood-seeking objectives to concentrate mass in high-density regions~\cite{wang2024taming}. 

\subsection{Experiments on Real-World Data}

In addition to synthetic distributions, we further validate our method on two real-world problems. The first is 
a two-dimensional PCA representation of MNIST with multimodal cluster structure. The second is the alanine dipeptide conformational 
space, a benchmark molecular system with a rugged free-energy landscape 
and multiple barriers.
\begin{figure}[htbp]
    \centering
    
    \begin{subfigure}[b]{0.15\textwidth}
        \centering
        \includegraphics[width=\textwidth]{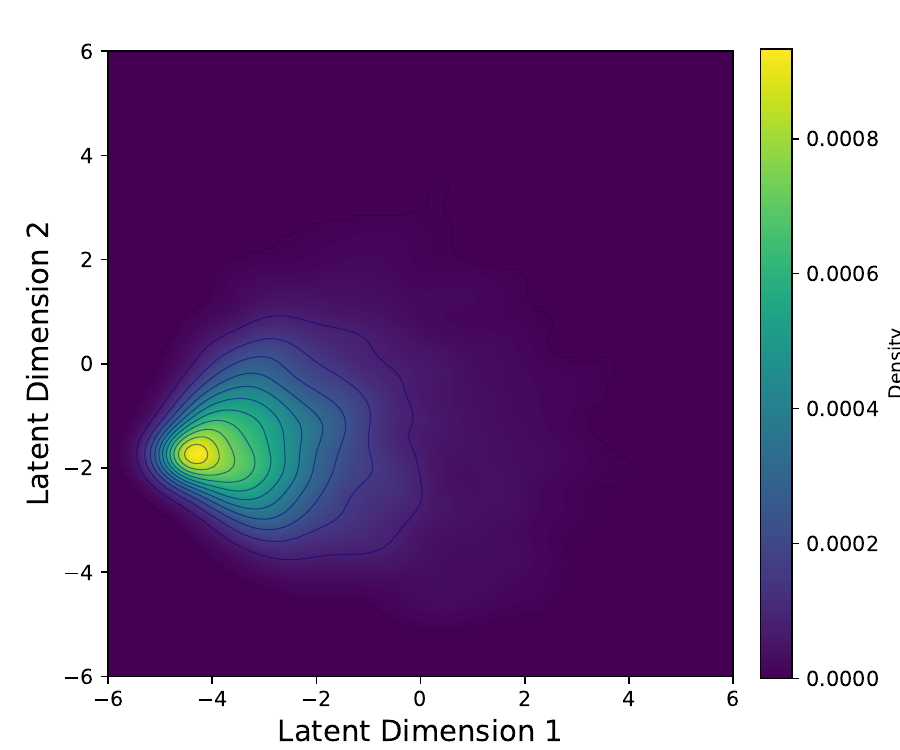}
        \caption{Target}
        \label{fig:mnist_target_kde}
    \end{subfigure}
    \hfill
    \begin{subfigure}[b]{0.15\textwidth}
        \centering
        \includegraphics[width=\textwidth]{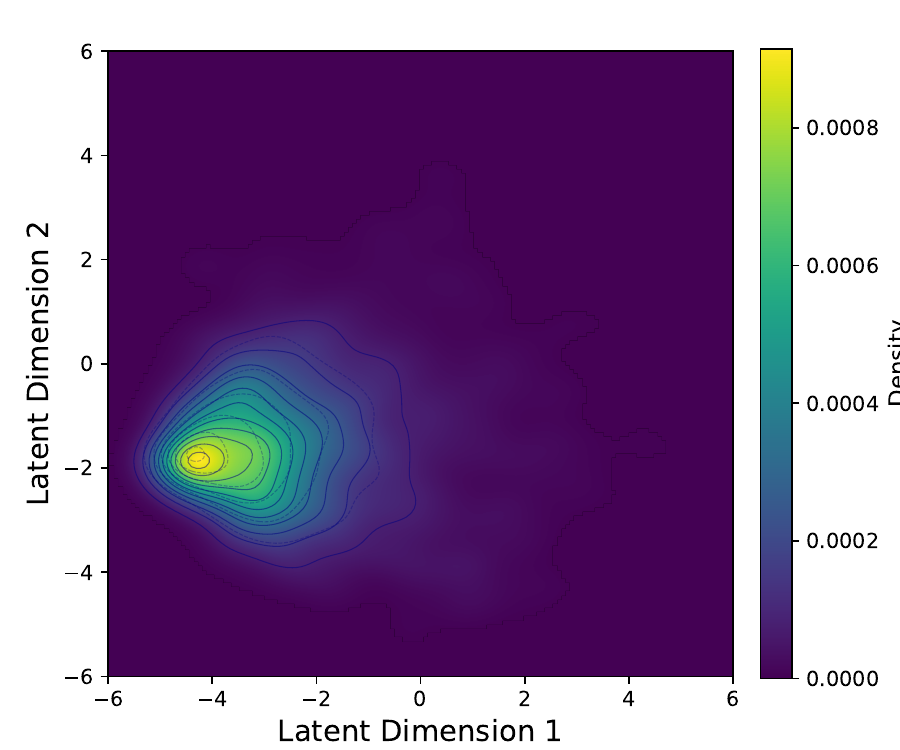}
        \caption{$\alpha=0.3$}
        \label{fig:mnist_nlw_alpha0.3_kde}
    \end{subfigure}
    \hfill
    \begin{subfigure}[b]{0.15\textwidth}
        \centering
        \includegraphics[width=\textwidth]{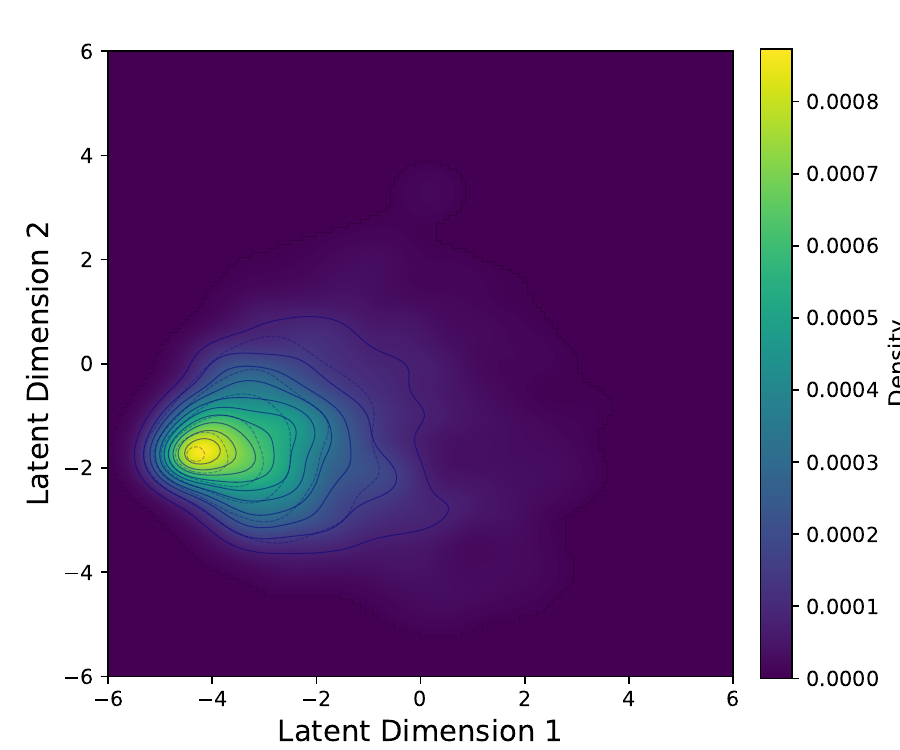}
        \caption{$\alpha=0.5$}
        \label{fig:mnist_nlw_alpha0.5_kde}
    \end{subfigure}
    \hfill
    \begin{subfigure}[b]{0.15\textwidth}
        \centering
        \includegraphics[width=\textwidth]{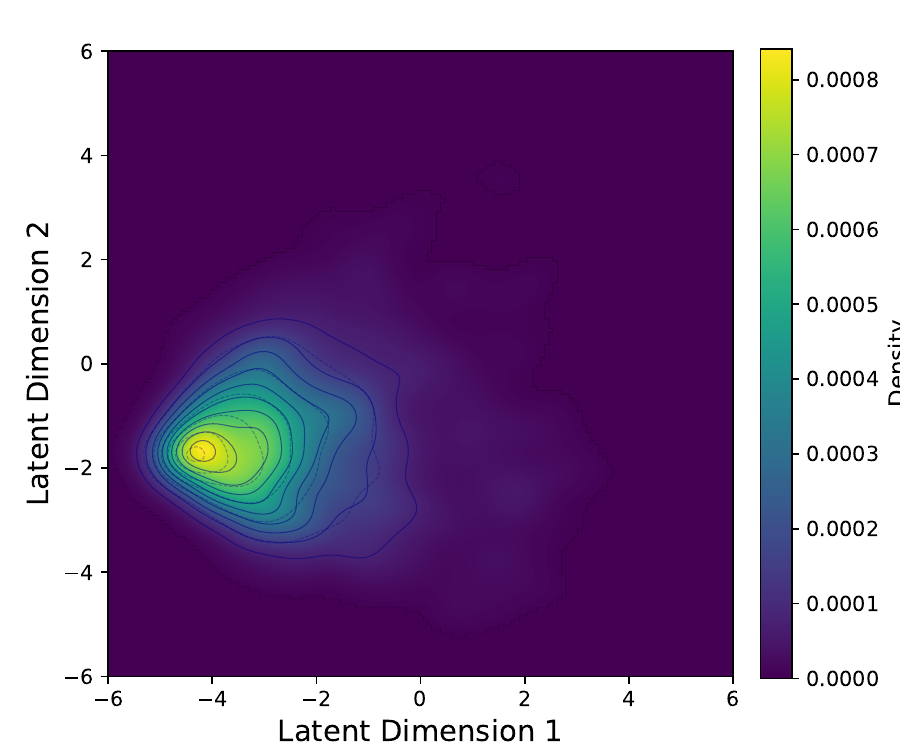}
        \caption{$\alpha=1.0$}
        \label{fig:mnist_nlw_alpha1.0_kde}
    \end{subfigure}
    \hfill
    \begin{subfigure}[b]{0.15\textwidth}
        \centering
        \includegraphics[width=\textwidth]{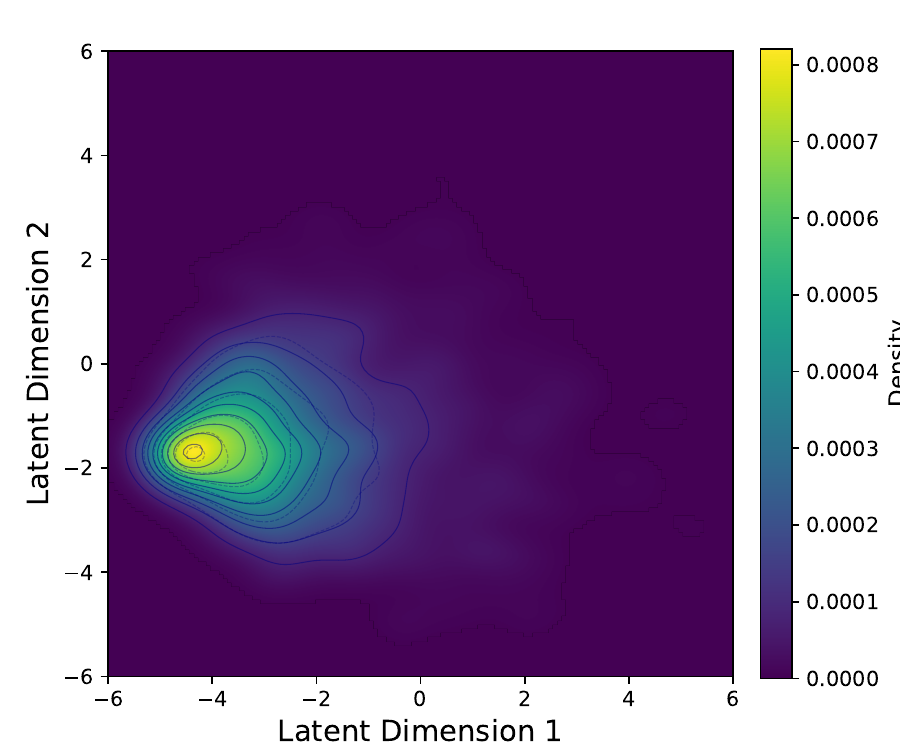}
        \caption{$\alpha=1.5$}
        \label{fig:mnist_nlw_alpha1.5_kde}
    \end{subfigure}
    
    \par\vspace{0.3cm}
    
    \begin{subfigure}[b]{0.15\textwidth}
        \centering
        \includegraphics[width=\textwidth]{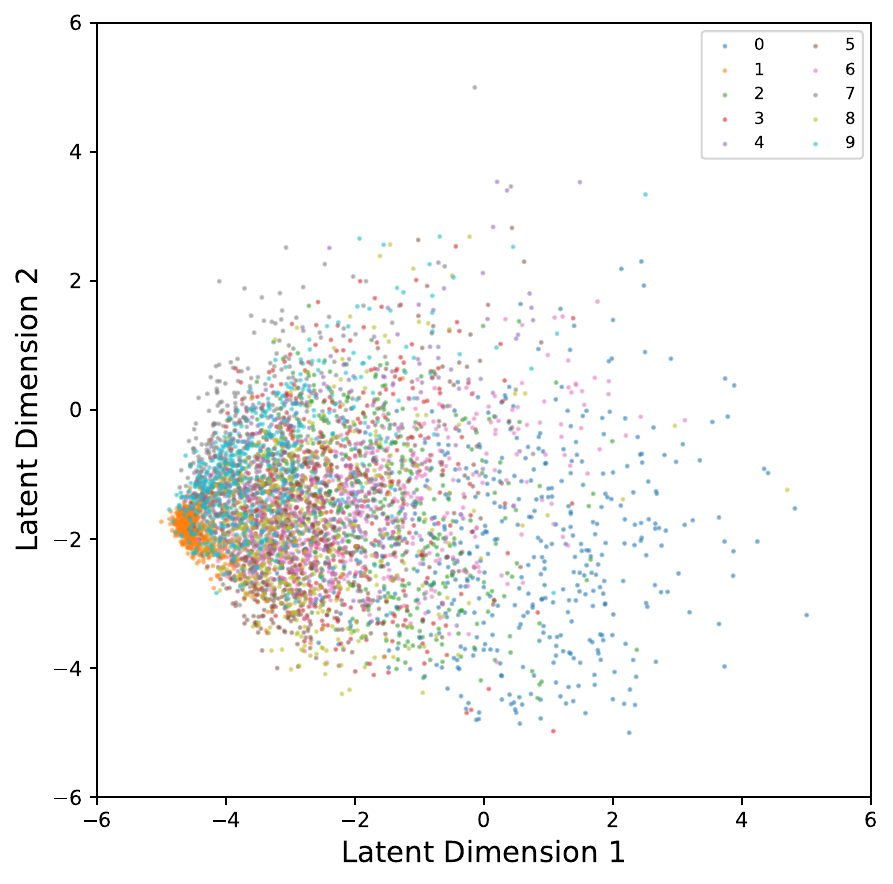}
        \caption{Target}
        \label{fig:mnist_target_samples}
    \end{subfigure}
    \hfill
    \begin{subfigure}[b]{0.15\textwidth}
        \centering
        \includegraphics[width=\textwidth]{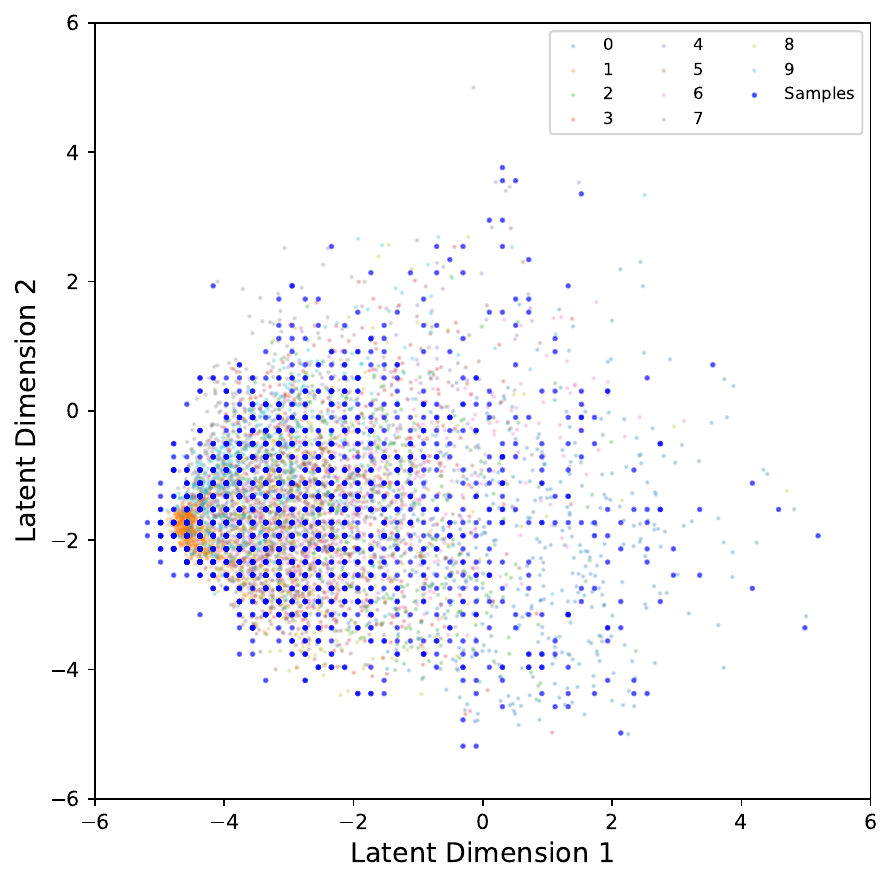}
        \caption{$\alpha=0.3$}
        \label{fig:mnist_nlw_alpha0.3_samples}
    \end{subfigure}
    \hfill
    \begin{subfigure}[b]{0.15\textwidth}
        \centering
        \includegraphics[width=\textwidth]{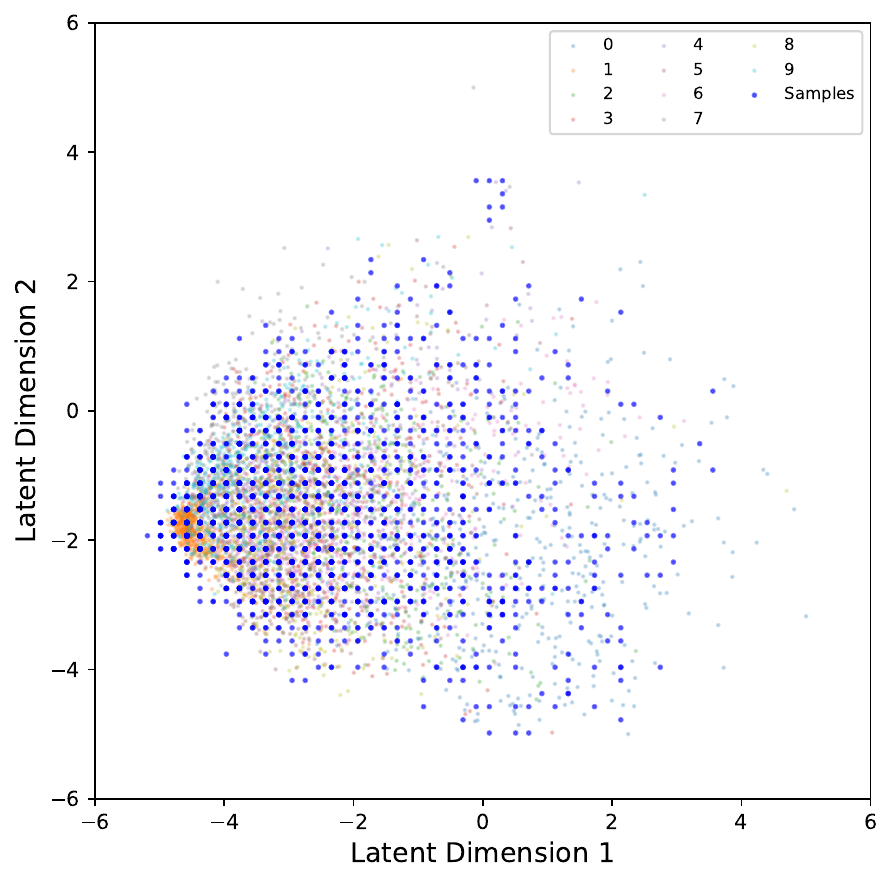}
        \caption{$\alpha=0.5$}
        \label{fig:mnist_nlw_alpha0.5_samples}
    \end{subfigure}
    \hfill
    \begin{subfigure}[b]{0.15\textwidth}
        \centering
        \includegraphics[width=\textwidth]{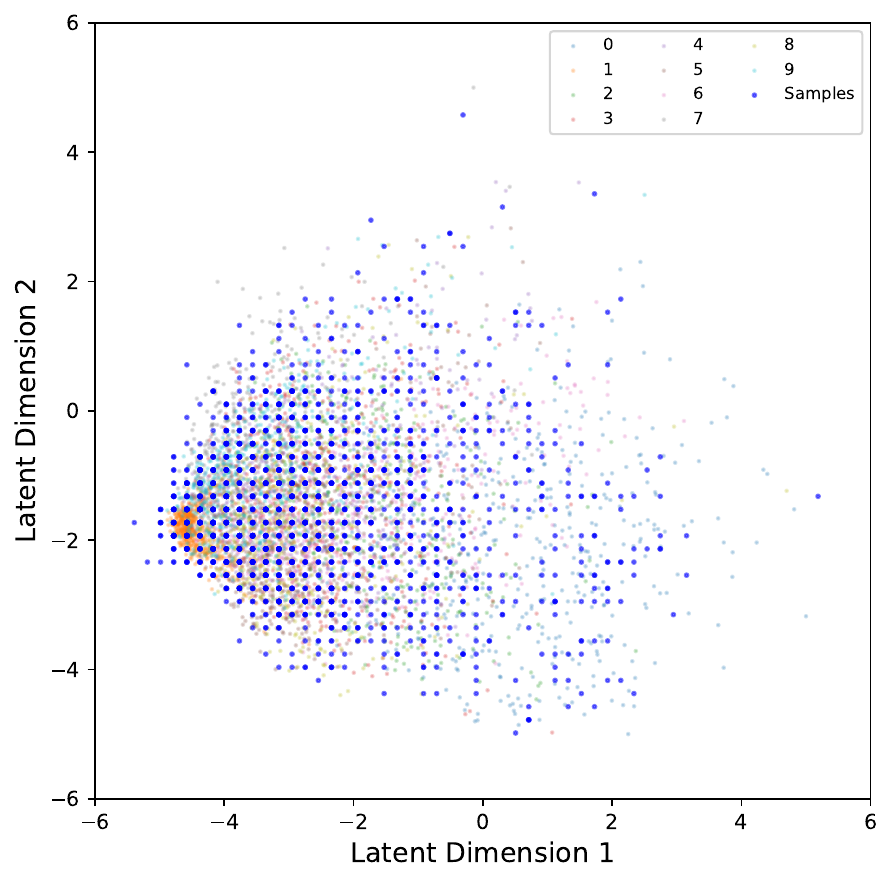}
        \caption{$\alpha=1.0$}
        \label{fig:mnist_nlw_alpha1.0_samples}
    \end{subfigure}
    \hfill
    \begin{subfigure}[b]{0.15\textwidth}
        \centering
        \includegraphics[width=\textwidth]{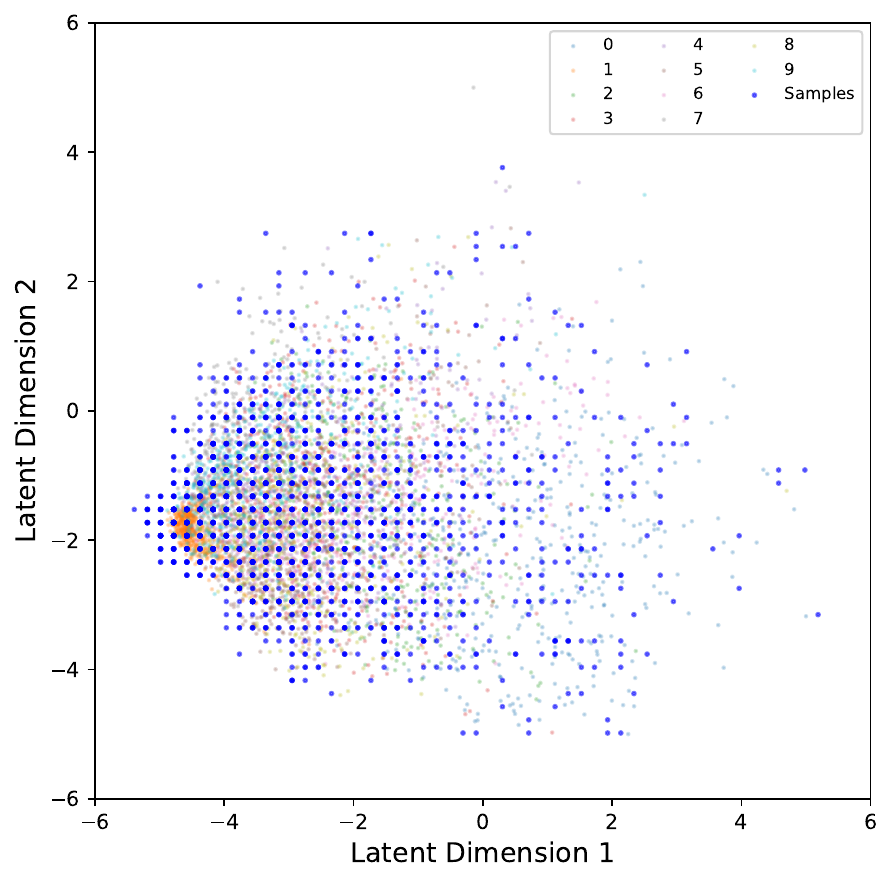}
        \caption{$\alpha=1.5$}
        \label{fig:mnist_nlw_alpha1.5_samples}
    \end{subfigure}
    \caption{MNIST latent space sampling results for different fractional orders $\alpha$. Top row: KDE-estimated densities. Bottom row: scatter plots of sampled points overlaid with true data (gray points). }
    \label{fig:mnist_results}
\end{figure}

First, we consider the MNIST dataset embedded in a two-dimensional latent space. Specifically, we apply PCA to the 784-dimensional pixel space of MNIST and retain the first two principal components, yielding a 2D representation of the data. The target distribution is then estimated via kernel density estimation (KDE) on this latent space. We compare the sampling results of the proposed method with different $\alpha$ values by visualizing both the KDE-estimated densities and the raw samples.

Figure~\ref{fig:mnist_results} shows the KDE densities and scatter plots of our method on the MNIST latent space for different $\alpha$. The method consistently captures the multimodal structure of the target distribution across all $\alpha$ values, though with varying degrees of localization. Smaller $\alpha$ yields more dispersed samples with broader mode coverage, while larger $\alpha$ produces more concentrated distributions around high-density regions.

We next consider alanine dipeptide, a benchmark molecular system widely used in computational chemistry. Following the standard setup, the molecular conformation is described by the backbone dihedral angles $\phi$ and $\psi$, which define the Ramachandran plot. For the angular coordinates, all graph distances, edge displacements, and transport diagnostics are computed with periodic wrapping.
The target distribution is estimated via KDE from a converged long-time molecular dynamics trajectory. We apply the proposed method to this dataset and compare the results for different fractional orders $\alpha$.

\begin{figure}[htbp]
    \centering
    \begin{subfigure}[b]{0.18\textwidth}
        \centering
        \includegraphics[width=\textwidth]{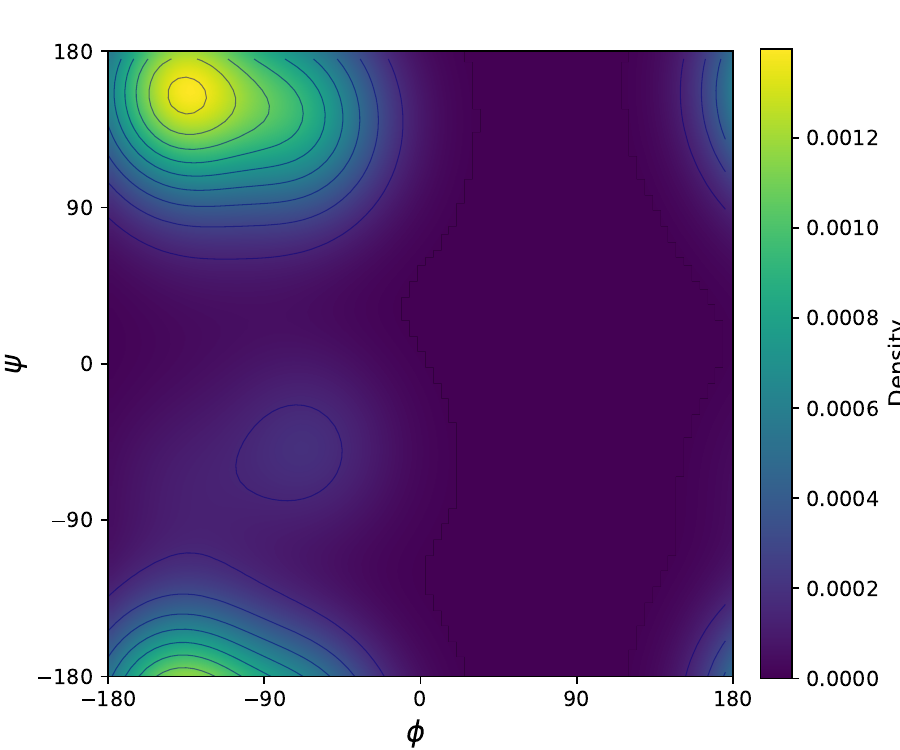}
        \caption{Target}
        \label{fig:ala_target}
    \end{subfigure}
    \hfill
    \begin{subfigure}[b]{0.18\textwidth}
        \centering
        \includegraphics[width=\textwidth]{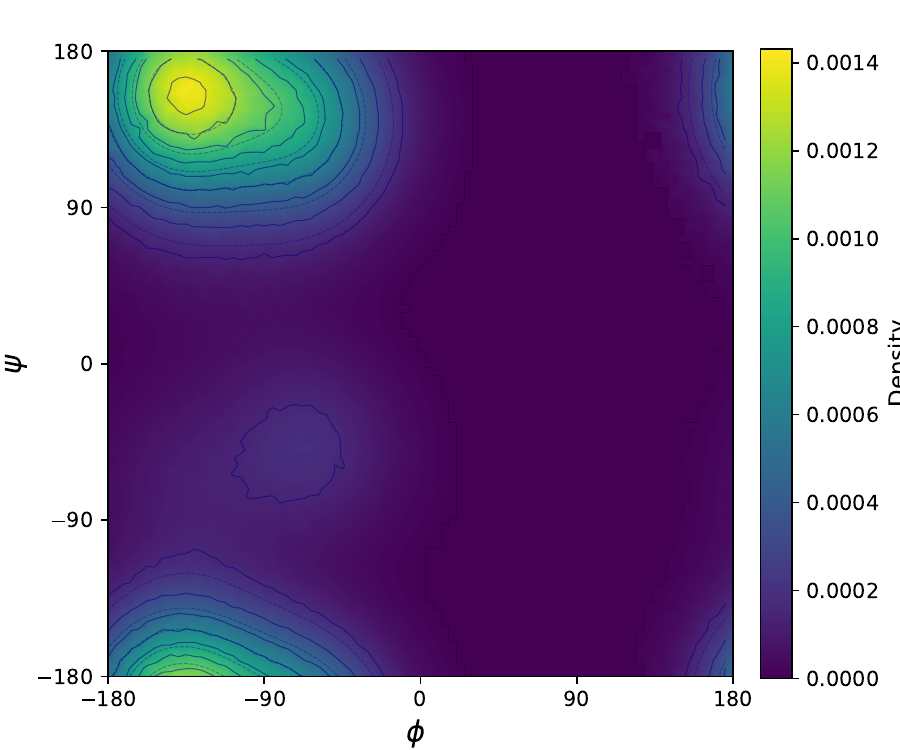}
        \caption{$\alpha=0.3$}
        \label{fig:ala_alpha0.3}
    \end{subfigure}
    \hfill
    \begin{subfigure}[b]{0.18\textwidth}
        \centering
        \includegraphics[width=\textwidth]{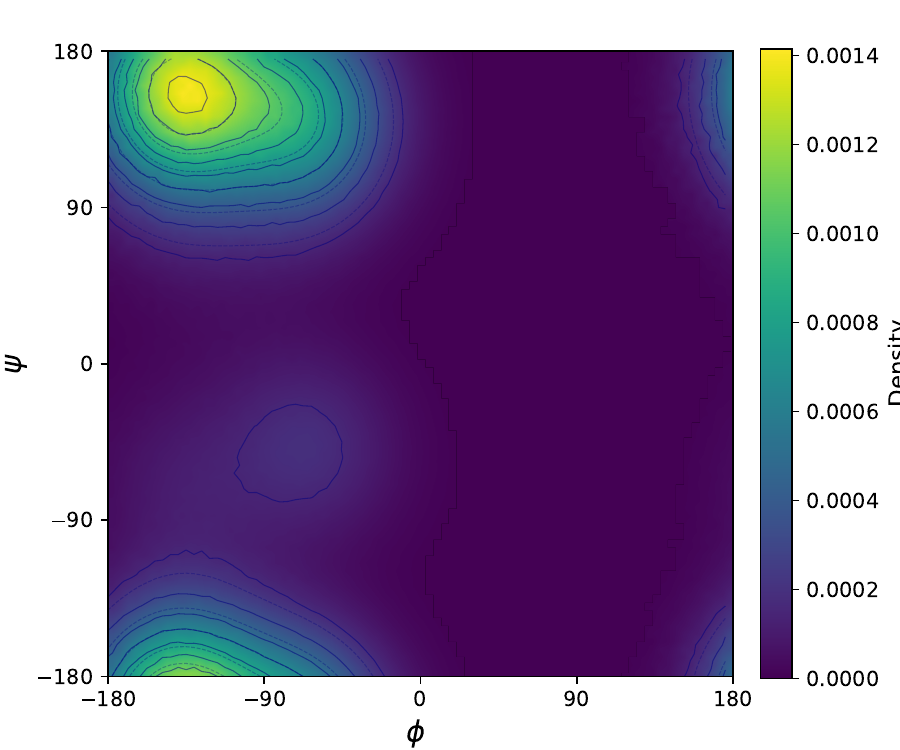}
        \caption{$\alpha=0.5$}
        \label{fig:ala_alpha0.5}
    \end{subfigure}
    \hfill
    \begin{subfigure}[b]{0.18\textwidth}
        \centering
        \includegraphics[width=\textwidth]{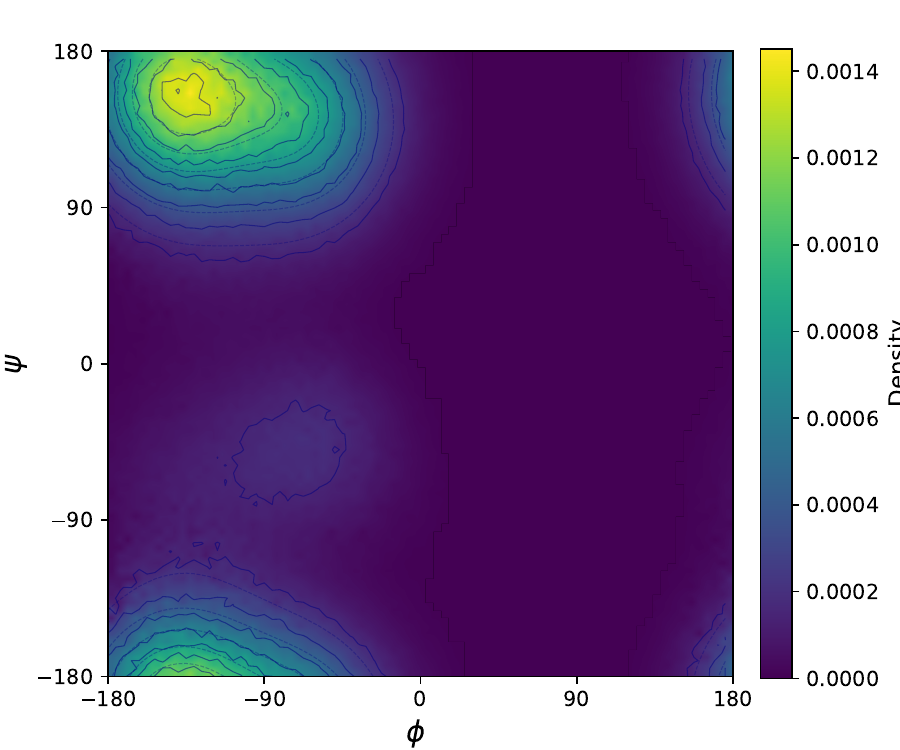}
        \caption{$\alpha=1.0$}
        \label{fig:ala_alpha1.0}
    \end{subfigure}
    \hfill
    \begin{subfigure}[b]{0.18\textwidth}
        \centering
        \includegraphics[width=\textwidth]{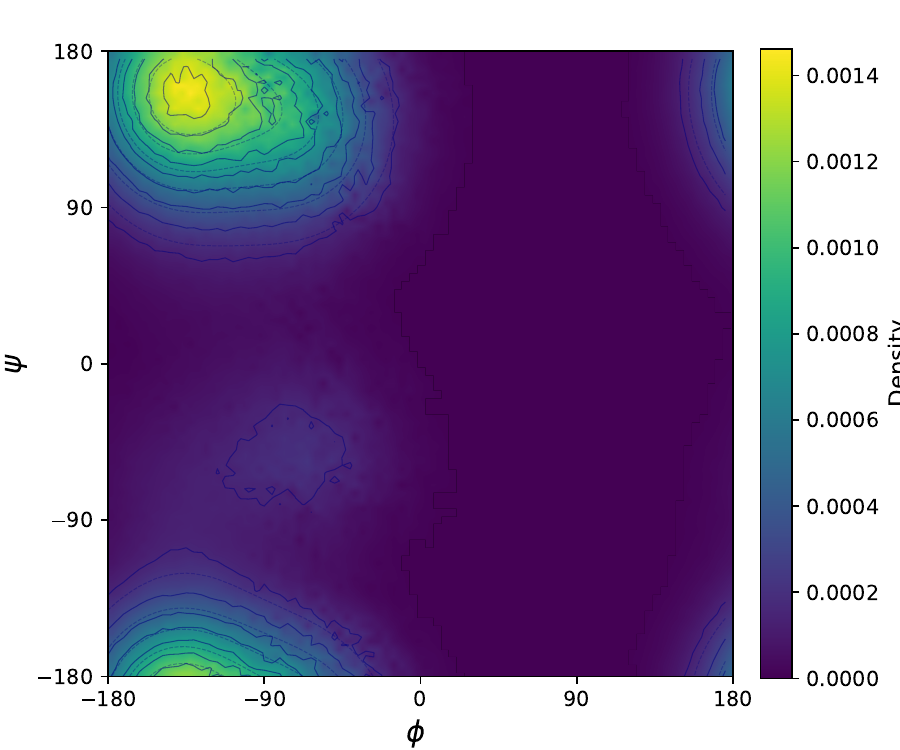}
        \caption{$\alpha=1.5$}
        \label{fig:ala_alpha1.5}
    \end{subfigure}
    
    \caption{Ramachandran plots of alanine dipeptide sampling for different fractional orders $\alpha$. 
             The target distribution exhibits two major metastable states: C7eq ($\phi \approx -80^\circ, \psi \approx 80^\circ$) and C7ax ($\phi \approx 80^\circ, \psi \approx -80^\circ$).}
    \label{fig:alanine_alpha_comparison}
\end{figure}
From Figure~\ref{fig:alanine_alpha_comparison}, we observe that the sampling quality depends significantly on $\alpha$. Smaller $\alpha$ ($0.5$) faithfully reproduces the target distribution, capturing both major metastable states (C7eq and C7ax). Larger $\alpha$ ($1.5$), however, yields localized sampling that misses one of the conformational basins. This suggests that stronger long-range transport can improve basin exploration on this rugged free-energy landscape.
These results confirm the flexibility and effectiveness of the nonlocal transport mechanism on real-world multimodal data.

\begin{figure}[htbp]
    \centering
    \begin{subfigure}[b]{0.48\textwidth}
        \centering
        \includegraphics[width=\textwidth]{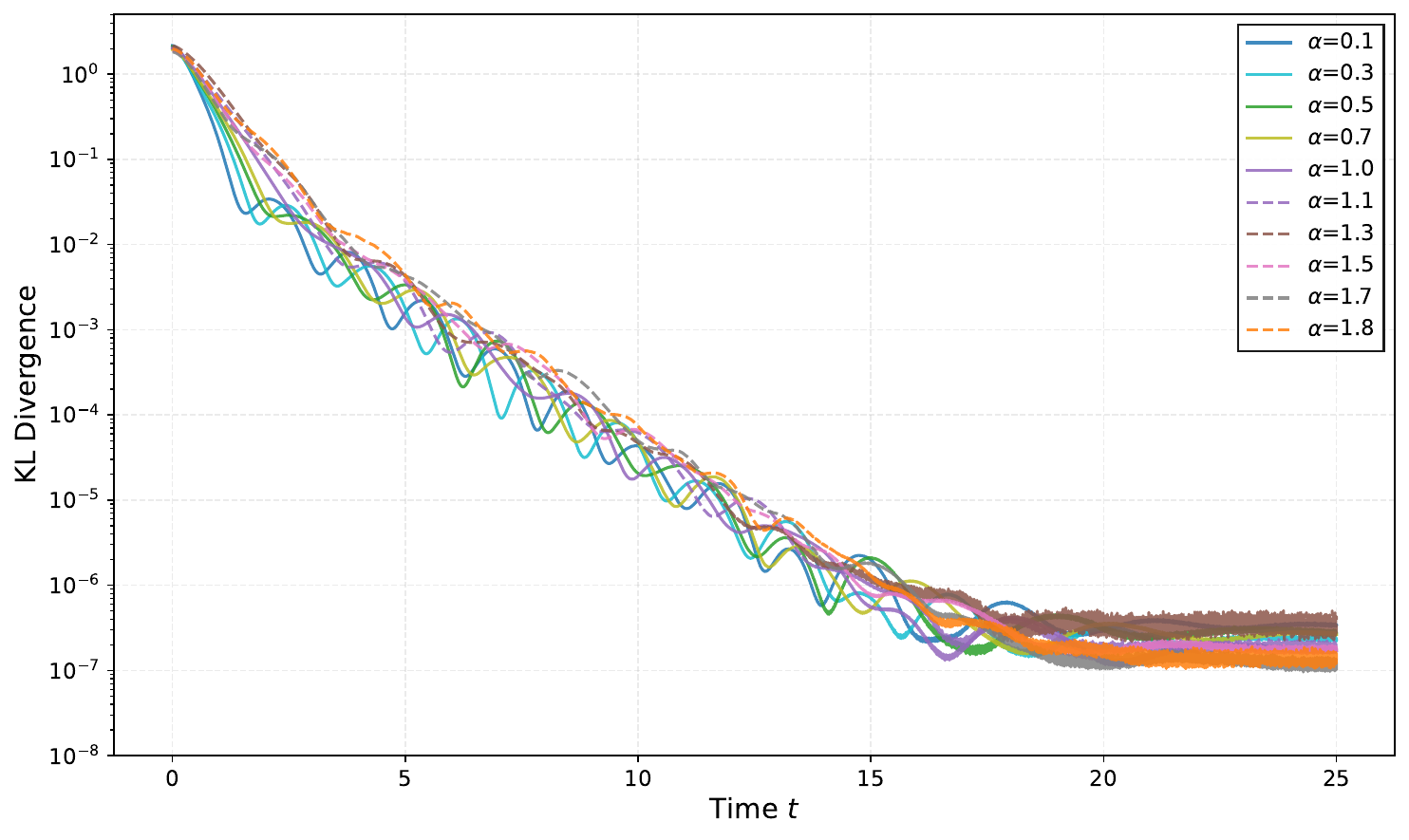}
        \caption{KL evolution for MNIST}
        \label{fig:mnist_kl_evolution}
    \end{subfigure}
    \hfill
    \begin{subfigure}[b]{0.48\textwidth}
        \centering
        \includegraphics[width=\textwidth,height=4.8cm]{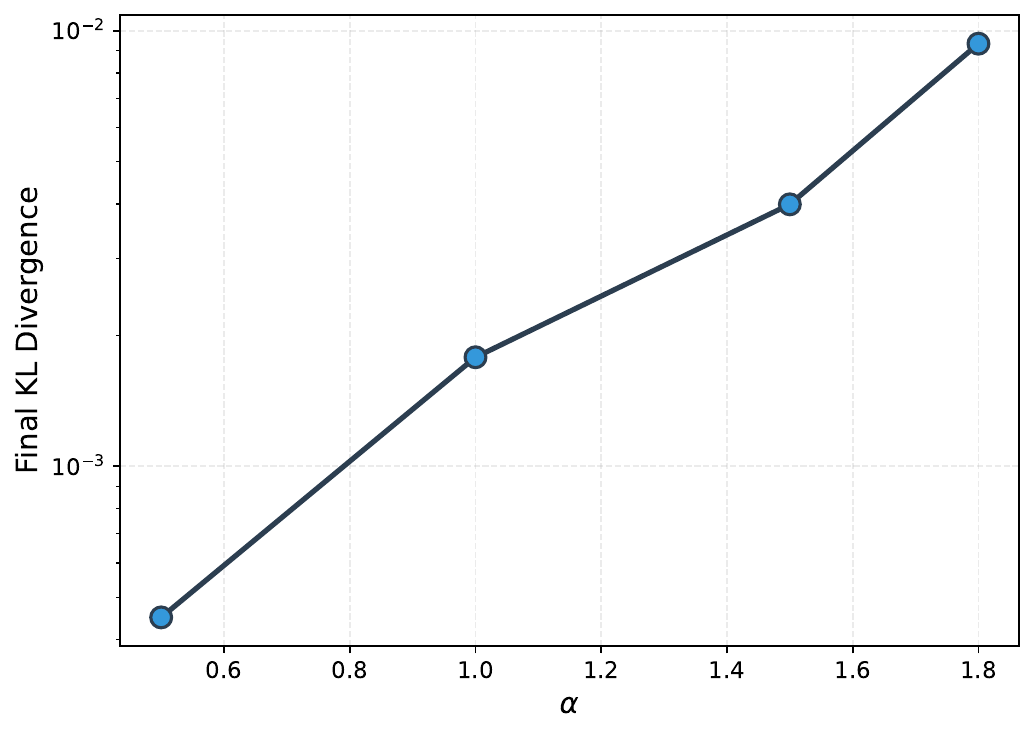}
        \caption{Final KL vs $\alpha$ (alanine dipeptide)}
        \label{fig:alanine_kl}
    \end{subfigure}
    \caption{(a) KL divergence evolution on the two-dimensional MNIST latent space for different $\alpha$; (b) final KL as a function of $\alpha$ for the alanine dipeptide system.}
    \label{fig:kl_results}
\end{figure}

Figure~\ref{fig:kl_results} 
reports the convergence behavior on both datasets. For MNIST, smaller $\alpha$ gives faster initial 
decay, reflecting stronger nonlocal transport, while larger $\alpha$ slows 
convergence toward the local limit. For alanine dipeptide, the final KL as a function of $\alpha$ attains its smallest reported KL near $\alpha \approx 0.5$ among the tested values. These results confirm that the nonlocal mechanism is effective 
in low-dimensional real-world representations, although the optimal $\alpha$ 
depends on the target geometry. A limitation of the current formulation is that all real-data experiments 
are restricted to two-dimensional latent spaces (dihedral angles for alanine 
and PCA for MNIST). This is due to the grid-based discretization, which 
scales as $N = \mathcal{O}(h^{-d})$ and is primarily suited for problems 
with low-dimensional structure or meaningful reaction coordinates.

\section{Conclusion}
In this paper, we introduced the nonlocal Wasserstein Hamiltonian flow, a 
density-based sampling algorithm that embeds L{\'e}vy jump kernels directly into the dynamics of the probability evolution. Unlike particle-based methods, our approach operates at the density level. 
It enables direct long-range interactions between spatial locations, 
avoiding the path dependence that limits conventional gradient-based 
samplers on multimodal distributions. The main contributions of this work are threefold:

(i) We derived a graph-discretized nonlocal Hamiltonian system with L{\'e}vy
jump kernels. It evolves the probability density directly on a graph and
enables long-range mode communication without traversing low-density
barriers.
    
(ii) We established a local exponential convergence theory via linearization 
under a spectral gap condition. The convergence rate depends on the 
spectrum of the nonlocal graph Laplacian.

(iii) We introduced a sparse graph construction with local and long-range edges 
sampled from the L{\'e}vy kernel. It reduces the per-step complexity 
from \(\mathcal{O}(N^2)\) to \(\mathcal{O}(N(k_{local}+m))\) while preserving the 
heavy-tailed interaction structure essential for nonlocal mode exploration. 
    
The numerical experiments show that the proposed method achieves
improved mode coverage and balanced mode occupancy on the tested
multimodal benchmarks. The spectral analysis further reveals a trade-off
between nonlocal connectivity and second-order inertia. Stronger
nonlocality enlarges the spectral gap \(\lambda_1\), which speeds up the
first-order flow but weakens the second-order advantage. In the small-gap regime \(\lambda_1<1\), the second-order flow is
asymptotically faster.

However, the proposed framework presents several limitations. The fixed spatial discretization is subject to the curse of dimensionality, and both the L{\'e}vy kernel parameters and the damping coefficient require problem-specific tuning. On the tested multimodal benchmarks, the nonlocal density-based
formulation achieves better mode coverage and more balanced occupancy
than MCMC, MALA, and underdamped Langevin dynamics under the reported
computational budgets.

This suggests that the nonlocal gradient flow approach holds significant potential for challenging sampling problems.
Future work includes extending the method to high-dimensional
settings, designing adaptive graph constructions that balance local and
long-range connectivity, and further investigating the optimal damping
strategy beyond the linearized regime.

\section*{Acknowledgments}
This work was partly supported by the NSFC International Collaboration
Fund for Creative Research Teams (Grant W2541005), the Guangdong
Provincial Key Laboratory of Mathematical and Neural Dynamical Systems
(Grant 2024B1212010004), the Cross Disciplinary Research Team on Data
Science and Intelligent Medicine (2023KCXTD054), the Guangdong-Dongguan
Joint Research Fund (Grant 2023A1515140016), the Dongguan Key Laboratory
for Data Science and Intelligent Medicine, the Natural Science Foundation
of Guangdong (Grant No. 2025A1515011188), the Hubei Provincial Natural
Science Foundation of China (Grant 2026AFA034), and the Hubei Key
Laboratory of Engineering Modeling and Scientific Computing.

\section*{Statements and Declarations}

\paragraph{Competing interests.}
The authors declare no competing interests.
\paragraph{Data availability.}
The data supporting the numerical results of this study are available from the corresponding author upon reasonable request.

\appendix
\renewcommand{\thesection}{\Alph{section}}
\setcounter{section}{0}
\section{Variational Derivation}\label{app:ham}

We provide the detailed variational calculations of Proposition~\ref{prop:hamiltonian}.

The Hamiltonian (\ref{eq:Hamilton_sym}) depends on $\Phi$ only through the quadratic term. For any test function $\psi$, the first variation is
\begin{equation}
\left\langle \frac{\delta \mathcal{H}}{\delta \Phi}, \psi \right\rangle = \frac14 \cdot 2 \iint (\Phi(x)-\Phi(y)) (\psi(x)-\psi(y)) \hat{\rho}(x,y) J\mathrm{m}(\mathrm{d}x,\mathrm{d}y).
\label{eq:var_phi_step1}
\end{equation}
Using the symmetry \(J(x,\mathrm{d}y)\mathrm{m}(\mathrm{d}x) = J(y,\mathrm{d}x)\mathrm{m}(\mathrm{d}y)\), the right-hand side of (\ref{eq:var_phi_step1}) simplifies to
\begin{equation}
\left\langle \frac{\delta \mathcal{H}}{\delta \Phi}, \psi \right\rangle = \iint \psi(x) (\Phi(x)-\Phi(y)) \hat{\rho}(x,y) \, J\mathrm{m}(\mathrm{d}x,\mathrm{d}y).
\label{eq:var_phi_step2}
\end{equation}
Hence, for fixed \(z\), the first variation with respect to \(\Phi\) is
\begin{equation}
\frac{\delta \mathcal{H}}{\delta \Phi}(z) = \int_{\Omega} (\Phi(z)-\Phi(y)) \hat{\rho}(z,y) J(z,\mathrm{d}y).
\label{eq:var_phi_result}
\end{equation}
Substituting (\ref{eq:var_phi_result}) into $\partial_t \rho = \frac{\delta\mathcal{H}}{\delta\Phi}$ yields (\ref{eq:state}).

We now compute the variation with respect to $\rho$. Since $\hat{\rho}(x,y)=\theta(\rho(x),\rho(y))$ depends on $\rho$ through both arguments, the variational derivative is
\begin{equation}
\frac{\delta \hat{\rho}(x,y)}{\delta \rho(z)} = \partial_1\theta(\rho(x),\rho(y)) \, \delta(z-x) + \partial_2\theta(\rho(x),\rho(y)) \, \delta(z-y).
\label{eq:var_rhoh}
\end{equation}

Applying (\ref{eq:var_rhoh}) to the kinetic energy, we obtain, for fixed \(z\),
\begin{align}
\frac{\delta K}{\delta \rho(z)}
&= \frac14 \int_{\Omega} (\Phi(z)-\Phi(y))^2 \partial_1\theta(\rho(z),\rho(y)) \, J(z,\mathrm{d}y) \nonumber \\
&\quad + \frac14 \int_{\Omega} (\Phi(x)-\Phi(z))^2 \partial_2\theta(\rho(x),\rho(z)) \, J(x,\mathrm{d}z),
\label{eq:var_rho_step1}
\end{align}
where the first integral is over \(y\) and the second over \(x\), with \(z\) fixed.

Using the symmetry \(J\mathrm{m}(\mathrm{d}x,\mathrm{d}z) = J\mathrm{m}(\mathrm{d}z,\mathrm{d}x)\) and the identity \(\partial_2\theta(r,s)=\partial_1\theta(s,r)\), the second term in (\ref{eq:var_rho_step1}) becomes
\[
\frac14 \int_{\Omega} (\Phi(z)-\Phi(x))^2 \partial_1\theta(\rho(z),\rho(x)) \, J(z,\mathrm{d}x),
\]
where the integral is over \(x\) with \(z\) fixed. Thus the two terms are identical, and we have
\begin{equation}
\frac{\delta K}{\delta \rho(z)} = \frac12 \int_{\Omega} (\Phi(z)-\Phi(y))^2 \partial_1\theta(\rho(z),\rho(y)) \, J(z,\mathrm{d}y).
\label{eq:var_rho_result}
\end{equation}

The variation of the potential energy (\ref{eq:KL}) is standard:
\begin{equation}
\frac{\delta E}{\delta \rho(z)} = \log\frac{\rho(z)}{\pi(z)} + 1.
\label{eq:var_E}
\end{equation}

Finally, substituting (\ref{eq:var_rho_result}) and (\ref{eq:var_E}) into $\partial_t \Phi = -\frac{\delta\mathcal{H}}{\delta\rho} = -(\frac{\delta K}{\delta\rho} + \frac{\delta E}{\delta\rho})$ yields (\ref{eq:momentum}).

\section{Proofs of theoretical results }
\label{app:theory_proofs}
This appendix collects the technical proofs supporting the convergence
analysis in Section~\ref{sec:convergence}, including the linearization
and local convergence analysis, the temporal discretization error, and
the sparse graph approximation error. 
 
Let \(\Omega \subset \mathbb{R}^d\) be a bounded domain, and let \(\Omega_h := \{x_i\}_{i=1}^N \subset \Omega\) be a regular grid with spacing \(h\), with volume element \(h^d\).

\subsection{Local convergence and spectral analysis}
\label{app:local_convergence}

This subsection collects the technical details underlying the local
convergence analysis in Section~\ref{subsec:mixing}. We first derive the
linearization around equilibrium and the associated spectral structure,
and then prove the local exponential convergence, optimal damping, and
the local metric equivalence used in the Wasserstein convergence
estimate.

\paragraph{Linearization and spectral structure.}

First, using $\log(1+\varphi_i)=\varphi_i+O(\varphi^2)$, we have
\begin{equation}\label{eq:expansion_G}
G_i-G_j
= \log(1+\varphi_i)-\log(1+\varphi_j)
= \varphi_i-\varphi_j+O(\varphi^2).
\end{equation}

Second, substituting \(\rho_i=\pi_i(1+\varphi_i)\) into the definition of the logarithmic mean function and expanding around \(\pi\) gives
\begin{equation}\label{eq:expansion_theta}
\theta_{ij}(\rho) = \theta(\pi_i,\pi_j) + \partial_1\theta(\pi_i,\pi_j)\pi_i\varphi_i + \partial_2\theta(\pi_i,\pi_j)\pi_j\varphi_j + O(\varphi^2).
\end{equation}
Thus \(\theta_{ij}(\rho) = \theta_{ij}^0 + O(\varphi)\), where \(\theta_{ij}^0 := \theta(\pi_i,\pi_j)\).

Third, we consider the nonlocal convection term defined in (\ref{eq:convection}). Its expansion gives
\begin{equation}\label{eq:expansion_C}
\mathcal C_{ij}(\rho,u)
=\frac12\left(\sum_k\partial_1\theta_{ik}u_{ik}^2\nu_{ik}
- \sum_k\partial_1\theta_{jk}u_{jk}^2\nu_{jk}\right)
= O(\|u\|^2).
\end{equation}
Thus it does not contribute to the first-order linearization.
We can define the operators
\[
(Bu)_i:=\pi_i^{-1}\sum_{j:(i,j)\in\mathcal E}\theta_{ij}^0u_{ij}\nu_{ij},\qquad
(C\varphi)_{ij}:=\varphi_i-\varphi_j.
\]
Then the linearized dynamics can be written compactly as
\begin{align}\label{eq:linearized_dynamics}
\partial_t\varphi=-Bu,\qquad \partial_t u=C\varphi-\gamma u.
\end{align}
Eliminating \(u\) gives
\(\partial_{tt}\varphi+\gamma\partial_t\varphi+A\varphi=0\)
with \(A=BC\), which coincides with the explicit form in the main text.

\begin{proof}[Proof of self-adjointness]
For any \(\phi,\psi\in\ell^2(\pi)\), where \(\ell^2(\pi)\) is equipped with the inner product
\(\langle \phi,\psi\rangle_\pi := \sum_i \pi_i \phi_i \psi_i\), we have
\[
\begin{aligned}
\langle A\phi,\psi\rangle_\pi
= \sum_i\pi_i(A\phi)_i\psi_i = \sum_{i,j}\theta_{ij}^0(\phi_i-\phi_j)\psi_i\nu_{ij} = \frac12\sum_{i,j}\theta_{ij}^0(\phi_i-\phi_j)(\psi_i-\psi_j)\nu_{ij},
\end{aligned}
\]
where the last equality follows from symmetrization using
$\theta_{ij}^0=\theta_{ji}^0$ and $\nu_{ij}=\nu_{ji}$.
The expression is symmetric in $\phi$ and $\psi$,
hence $A$ is self-adjoint on $\ell^2(\pi)$.
Moreover,
\(
\langle A\phi,\phi\rangle_\pi
= \frac12\sum_{i,j}\theta_{ij}^0(\phi_i-\phi_j)^2\nu_{ij}\ge0,
\)
so $A$ is positive semidefinite.
\end{proof}

We now relate the linearized operator $A$ to the nonlocal Fisher information
\[
\mathcal I(\rho\|\pi) := \frac12 \sum_{i,j} \theta_{ij}(\rho)(G_i-G_j)^2 \nu_{ij},
\qquad G_i := \log\frac{\rho_i}{\pi_i}+1.
\]
Substituting (\ref{eq:expansion_G}) and (\ref{eq:expansion_theta}) into the Fisher information yields
\[
\begin{aligned}
\mathcal I(\rho\|\pi)
= \frac12 \sum_{i,j} \theta_{ij}^0 (\varphi_i-\varphi_j)^2 \nu_{ij}
+ O(\|\varphi\|^3) 
= \langle A\varphi, \varphi\rangle_\pi + O(\|\varphi\|_\pi^3).
\end{aligned}
\]
This quadratic expansion provides the key link between the linearized operator $A$ and the local dissipation rate of the relative entropy.

\paragraph{Proof of local exponential convergence.}

\begin{proof}[Proof of Theorem~\ref{thm:local_exp}]
	To analyze local convergence, we linearize the dynamics around the
	equilibrium \((\pi,0)\). In the perturbation variables
	\(z=(\varphi,u)\), where \(\rho_i=\pi_i(1+\varphi_i)\), the nonlinear
	dynamics takes the form
	\[
	\dot z=\mathrm{M}z+\mathcal R(z),
	\qquad\text{equivalently}\qquad
	\ddot{\varphi}+\gamma\dot{\varphi}+A\varphi=\mathcal R_\varphi(\varphi,u),
	\quad A=BC,
	\]
	where \(\mathrm{M}\) is the linearization operator on \((\varphi,u)\), \(A=BC\)
	is the stiffness operator obtained after eliminating \(u\). The remainder \(\mathcal R\)
	collects the quadratic and higher-order terms; by Taylor's theorem on
	\(\mathcal U_\delta\), \(\mathcal R(0)=0\), \(D\mathcal R(0)=0\), and
	\[
	\|\mathcal R(z)\|_{\mathcal X}\le C_R\|z\|_{\mathcal X}^2
	\quad\text{for all }z\in\mathcal U_{\delta},
	\]
	where
	\(\|z\|_{\mathcal X}^2:=\|\varphi\|_{\ell^2(\pi)}^2+\|u\|_{\mathcal U}^2\),
	and \(\|\cdot\|_{\mathcal U}\) is the norm in the kinetic part of
	\(\mathcal L\).
	
	For the linearized system \(\ddot{\varphi}+\gamma\dot{\varphi}+A\varphi=0\),
	expanding \(\varphi\) in the eigenbasis of \(A\), \(\varphi(t)=\sum_kc_k(t)\phi_k\),
	gives \(\ddot c_k+\gamma\dot c_k+\lambda_kc_k=0\). Each mode decays as
	\(O(e^{-r_k(\gamma)t})\), except at critical damping
	\(\gamma=2\sqrt{\lambda_k}\), where \(c_k(t)=(a_k+b_kt)e^{-r_k(\gamma)t}\).
	The global decay is dominated by \(\lambda_1\), with rate
	\[
	r_1(\gamma)=
	\begin{cases}
		\gamma/2, & \gamma\le 2\sqrt{\lambda_1},\\[2pt]
		\bigl(\gamma-\sqrt{\gamma^2-4\lambda_1}\bigr)/2, & \gamma>2\sqrt{\lambda_1}.
	\end{cases}
	\]
	In particular, \(r_1(\gamma)>0\) for all \(\gamma>0\), and the linearized
	decay rate is maximized at
	\(\gamma^*=2\sqrt{\lambda_1}\), where \(r_1(\gamma^*)=\sqrt{\lambda_1}\).

	Let \(S(t)=e^{\mathrm{M}t}\) denote the solution semigroup of the linearized
	system \(\dot z=\mathrm{M}z\), so that \(z(t)=S(t)z(0)\) solves the linearized
	dynamics.
	Setting \(\lambda_c:=2r_1(\gamma)>0\), the modal computation yields
	\begin{equation}\label{eq:linear_semigroup_decay}
		\|S(t)z(0)\|_{\mathcal X}
		\le
		C_S(1+t)e^{-r_1(\gamma)t}\|z(0)\|_{\mathcal X}.
	\end{equation}
	For \(\gamma\ne2\sqrt{\lambda_1}\), the factor \((1+t)\) can be replaced
	by \(1\). The corresponding estimate for the full phase-space variable
	\(z=(\varphi,u)\) follows from the modal decomposition of the
	first-order linearized operator \(\mathrm M\): the \(\varphi\)- and
	\(u\)-components associated with each nonzero eigenmode of \(A\)
	inherit the same exponential rate, and any additional components in the
	kernel of the continuity operator decay according to the damping term.
	Hence \eqref{eq:linear_semigroup_decay} holds for the full phase-space
	variable \(z=(\varphi,u)\).

	For the full nonlinear system,
	\(z(t)=S(t)z(0)+\int_0^tS(t-s)\mathcal R(z(s))\,\mathrm{d}s\), with
	\(\|\mathcal R(z)\|_{\mathcal X}\le C_R\|z\|_{\mathcal X}^2\) on
	\(\mathcal U_\delta\). Let
	\(
	q_\gamma(t):=1
	\)
	for \(\gamma\neq2\sqrt{\lambda_1}\) and
	\(
	q_\gamma(t):=1+t
	\)
	at \(\gamma=2\sqrt{\lambda_1}\). The linear semigroup estimate gives
	\(\|S(t)\|_{\mathcal X\to\mathcal X}\le C_Sq_\gamma(t)e^{-r_1(\gamma)t}\),
	hence
	\[
	\|z(t)\|_{\mathcal X}
	\le
	C_Sq_\gamma(t)e^{-r_1(\gamma)t}\|z(0)\|_{\mathcal X}
	+
	C_RC_S
	\int_0^t
	q_\gamma(t-s)e^{-r_1(\gamma)(t-s)}
	\|z(s)\|_{\mathcal X}^2\,\mathrm{d}s.
	\]
	Let \(T^*>0\) denote the maximal time on which the bootstrap bound
	\(
	\|z(t)\|_{\mathcal X}\le2C_Sq_\gamma(t)e^{-r_1(\gamma)t}\|z(0)\|_{\mathcal X}
	\)
	holds for \(0\le t\le T^*\). Substituting this bound into the integral
	term yields
	\[
	\|z(t)\|_{\mathcal X}
	\le
	C_Sq_\gamma(t)e^{-r_1(\gamma)t}\|z(0)\|_{\mathcal X}
	+
	4C_RC_S^3\|z(0)\|_{\mathcal X}^2
	\int_0^t
	q_\gamma(t-s)q_\gamma(s)^2
	e^{-r_1(\gamma)(t-s)}e^{-2r_1(\gamma)s}
	\,\mathrm{d}s.
	\]
	Since \(q_\gamma(t)\ge1\), \(q_\gamma(t-s)\le q_\gamma(t)\), and
	\(
	e^{-r_1(\gamma)(t-s)}e^{-2r_1(\gamma)s}=e^{-r_1(\gamma)t}e^{-r_1(\gamma)s}
	\),
	we obtain
	\[
	\|z(t)\|_{\mathcal X}
	\le
	C_Sq_\gamma(t)e^{-r_1(\gamma)t}\|z(0)\|_{\mathcal X}
	\bigl(1+C_RC_1(r_1)C_S^2\|z(0)\|_{\mathcal X}\bigr),
	\]
	where
	\(
	C_1(r_1):=4\int_0^\infty q_\gamma(s)^2e^{-r_1(\gamma)s}\,\mathrm{d}s<\infty
	\).
	Choose \(\delta_0>0\) small enough that
	\(\delta_0\le\delta\), \(C_RC_1(r_1)C_S^2\delta_0\le\frac12\), and
	\(2C_SM_{r_1}\delta_0\le\delta\), with
	\(
	M_{r_1}:=\sup_{t\ge0}q_\gamma(t)e^{-r_1(\gamma)t}<\infty
	\).
	Then for \(\|z(0)\|_{\mathcal X}\le\delta_0\) the bootstrap bound
	improves to
\(
\|z(t)\|_{\mathcal X}
\le\frac32 C_Sq_\gamma(t)e^{-r_1(\gamma)t}
\|z(0)\|_{\mathcal X}
\)
	on \(0\le t\le T^*\), and
	\(\sup_{0\le t\le T^*}\|z(t)\|_{\mathcal X}\le\delta\). Hence the
	solution can be continued beyond \(T^*\), so \(T^*=\infty\): the solution
	exists globally, remains in \(\mathcal U_\delta\), and
	\begin{equation}\label{eq:nonlinear_phase_decay}
		\|z(t)\|_{\mathcal X}^2
		\le
		C_{\mathrm{nl}}q_\gamma(t)^2e^{-\lambda_c t}\|z(0)\|_{\mathcal X}^2,
		\qquad
		\lambda_c=2r_1(\gamma).
	\end{equation}

	Finally, on a sufficiently small neighborhood of \(\pi\), the mass
	constraint \(\sum_i\pi_i\varphi_i=0\) and the Taylor expansion of the
	relative entropy give
	\begin{equation}\label{ieq:euqality_phi}
		E(\rho)
		=
		\frac12\|\varphi\|_{\ell^2(\pi)}^2
		+
		O(\|\varphi\|_{\ell^2(\pi)}^3).
	\end{equation}
	Hence, after possibly reducing \(\delta\), there exist \(c_{11},c_{12}>0\)
	with
	\begin{align}  
	\label{ieq:energy_decay}
	c_{11}\|\varphi\|_{\ell^2(\pi)}^2\le E(\rho)\le c_{12}\|\varphi\|_{\ell^2(\pi)}^2
	\end{align}
	Together with the local equivalence of the kinetic energy
	\(K(\rho,u)=\tfrac12\|u\|_{\mathcal U,\rho}^2\) and the corresponding velocity norm,
	this yields \(c_L,C_L>0\) with
	\(
	c_L\|(\varphi,u)\|_{\mathcal X}^2\le\mathcal L(\rho,u)\le C_L\|(\varphi,u)\|_{\mathcal X}^2
	\)
	for all \((\rho,u)\in\mathcal U_\delta\). Combining this with
	\eqref{eq:nonlinear_phase_decay},
	\[
	\mathcal L(\rho(t),u(t))
	\le
	C_{\mathrm{ent}}q_\gamma(t)^2e^{-\lambda_c t}\mathcal L(\rho(0),u(0)),
	\qquad
	C_{\mathrm{ent}}=\frac{C_L}{c_L}C_{\mathrm{nl}}.
	\]
	Since \(q_\gamma(t)=1\) for \(\gamma\neq2\sqrt{\lambda_1}\) and
	\(q_\gamma(t)=1+t\) at critical damping, this proves the claimed decay
	estimate.
	
\end{proof}

\paragraph{Optimal damping and mixing-time consequence.}

\begin{proof}[Proof of Corollary~\ref{cor:acceleration}]
	The modal equation $\ddot c_k+\gamma\dot c_k+\lambda_k c_k=0$ was derived
	in the proof of Theorem~\ref{thm:local_exp}; its characteristic equation
	yields the stated decay rate $r_k(\gamma)$. Since
	\(\lambda_1\) is the smallest positive eigenvalue, the global decay
	rate is \(r(\gamma)=\min_kr_k(\gamma)=r_1(\gamma)\).  Maximizing the one-dimensional function
	$r_1(\gamma)$ gives $\gamma^*=2\sqrt{\lambda_1}$ and $r^*=\sqrt{\lambda_1}$.
	At critical damping, Theorem~\ref{thm:local_exp} bounds the
		normalized density error by $C(1+t)e^{-\sqrt{\lambda_1}t}$, where $C$
		is fixed for the graph and initial data under consideration.
		The envelope is decreasing for sufficiently large $t$; its final
		crossing yields
		$\sqrt{\lambda_1}T_{\rm mix}^{(2)}
		\le \log(1/\varepsilon)+O(\log\log(1/\varepsilon))$.
		The first-order exponential envelope gives the stated first-order bound.
		For a pure slowest mode with zero initial momentum, the exact normalized
		amplitudes are $(1+\sqrt{\lambda_1}t)e^{-\sqrt{\lambda_1}t}$ and
		$e^{-\lambda_1t}$, respectively. Their settling times have the stated
		leading-order ratio.
\end{proof}

\paragraph{Metric equivalence and Wasserstein convergence.}
\begin{proof}[Proof of Lemma~\ref{lem:wnl_l2}]
We establish the local equivalence between \(W_{\mathrm{NL}}\) and the
normalized mass-error norm
\(\|\rho-\pi\|_{\pi^{-1}}^2=\sum_i(\rho_i-\pi_i)^2/\pi_i\).

The nonlocal Wasserstein metric \(W_{\mathrm{NL}}\) of Erbar and
Maas~\cite{MAAS20112250} endows the probability simplex with a
finite-dimensional Riemannian structure; we denote by
\(|\dot\varphi|_{W_{\mathrm{NL}}}\) its tangent norm at \(\pi\), which is
characterized by the quadratic minimization associated with the nonlocal
continuity equation. For the perturbation \(\varphi\), the linearized
continuity equation of Section~\ref{sec:linearization} reads
\(\partial_t\varphi=-Bu\), where
\((Bu)_i=\pi_i^{-1}\sum_j\theta_{ij}^0u_{ij}\nu_{ij}\). Hence, for a
tangent vector \(\dot\varphi\),
\[
|\dot\varphi|_{W_{\mathrm{NL}}}^2
=
\min_{Bu=-\dot\varphi}
\frac12\sum_{i,j}u_{ij}^2\theta_{ij}^0\nu_{ij}.
\]
Introducing a Lagrange multiplier \(\psi\), the Euler--Lagrange equation
shows that the optimal velocity field is of gradient form,
\(u=-C\psi\), with \((C\psi)_{ij}=\psi_i-\psi_j\). Substituting into the
linearized continuity equation gives
\(\dot\varphi=BC\psi=A\psi\). Since
\(\sum_i\pi_i\dot\varphi_i=0\), \(A\) is invertible on the zero-mass
subspace, so \(\psi=A^\dagger\dot\varphi\) with \(A^\dagger\) the
Moore--Penrose pseudo-inverse of \(A\). Consequently,
\(
|\dot\varphi|_{W_{\mathrm{NL}}}^2
=
\langle\dot\varphi,A^\dagger\dot\varphi\rangle_\pi .
\)
Let \(A\phi_k=\lambda_k\phi_k\), with \(\{\phi_k\}_{k\ge1}\) an
orthonormal basis of the zero-mass subspace in \(\ell^2(\pi)\), and
expand \(\dot\varphi=\sum_{k\ge1}a_k\phi_k\). Then
\[
\langle\dot\varphi,A^\dagger\dot\varphi\rangle_\pi
=
\sum_{k\ge1}\frac{a_k^2}{\lambda_k}.
\]
Since \(\lambda_1\le\lambda_k\le\lambda_{\max}\),
\(
\frac1{\lambda_{\max}}\|\dot\varphi\|_{\ell^2(\pi)}^2
\le
|\dot\varphi|_{W_{\mathrm{NL}}}^2
\le
\frac1{\lambda_1}\|\dot\varphi\|_{\ell^2(\pi)}^2.
\)
Finally, \(\rho_i-\pi_i=\pi_i\varphi_i\) and
\(
\|\rho-\pi\|_{\pi^{-1}}^2
=
\sum_i\frac{(\rho_i-\pi_i)^2}{\pi_i}
=
\sum_i\pi_i\varphi_i^2
=
\|\varphi\|_{\ell^2(\pi)}^2.
\)
After shrinking the neighborhood of \(\pi\), the tangent metric bounds
	hold uniformly along paths contained in this neighborhood. Integrating
	these bounds over admissible paths and taking the infimum yields
	\eqref{ieq:W_l2_equiv}.
\end{proof}

\begin{proof}[Proof of Corollary~\ref{cor:WNL}]
	By Theorem~\ref{thm:local_exp}, the phase-space Lyapunov functional decays
	as in \eqref{ieq:energy_decay}. Since \(\mathcal L=E+K\) with \(K\ge0\),
	we have \(E(\rho(t))\le\mathcal L(\rho(t),u(t))\), and by the entropy
	equivalence \eqref{ieq:euqality_phi},
	\[
	\|\rho(t)-\pi\|_{\pi^{-1}}
	\le
	\sqrt{\frac{C_{\mathrm{ent}}c_{12}}{c_{11}}}
	(1+t)e^{-\lambda_c t/2}
	\|\rho(0)-\pi\|_{\pi^{-1}},
	\]
	where we used \(u(0)=0\) so that
	\(\mathcal L(\rho(0),0)=E(\rho(0))\le c_{12}\|\rho(0)-\pi\|_{\pi^{-1}}^2\).
	By Lemma~\ref{lem:wnl_l2},
	\(c_1\|\rho-\pi\|_{\pi^{-1}}\le W_{\mathrm{NL}}(\rho,\pi)\le
	c_2\|\rho-\pi\|_{\pi^{-1}}\) with
	\(c_1=(1-\eta)\lambda_{\max}^{-1/2}\),
		\(c_2=(1+\eta)\lambda_1^{-1/2}\). Hence
	\[
	W_{\mathrm{NL}}(\rho(t),\pi)
	\le
	\frac{c_2}{c_1}
	\sqrt{\frac{C_{\mathrm{ent}}c_{12}}{c_{11}}}
	(1+t)e^{-\lambda_c t/2}
	W_{\mathrm{NL}}(\rho(0),\pi),
	\]
	and since \(c_2/c_1=\frac{1+\eta}{1-\eta}
		\sqrt{\lambda_{\max}/\lambda_1}\),
	\(
	C_W
	=
	\frac{1+\eta}{1-\eta}\sqrt{\frac{\lambda_{\max}}{\lambda_1}}
	\sqrt{\frac{C_{\mathrm{ent}}c_{12}}{c_{11}}}.
	\)
	This completes the proof.
\end{proof}

\subsection{Temporal discretization error (Lemma~\ref{lem:temporal_error})}\label{app:temporal discretization error}

To close the error recursion, we need Lipschitz properties of the nonlinear
terms. The following bounds hold in a neighborhood of the equilibrium
$(\pi,0)$ and are assumed throughout the local error analysis.

\begin{proof}[Proof of Lemma~\ref{lem:lipschitz}]
Let \(e_\rho=\rho^1-\rho^2\), \(e_u=u^1-u^2\). Define the actual weighted degree
\(
D_\nu:=\max_i\sum_j\nu_{ij}.
\) For a fixed grid with mesh size $h>0$, the weights are bounded:
\(
0\le \nu_{ij}\le \nu_{\max}:=h^{-d-\alpha}<\infty.
\)
For each node $i$, 
\begin{equation}
    \sum_j\nu_{ij}\le D_\nu.
    \label{eq:degree-bound}
\end{equation}
Under Assumption~\ref{ass:local_regular}, we have
\(c_\rho\le\rho_i\le C_\rho\) and
\(|\partial_1\theta_{ij}|\le C_{\partial\theta}\). Hence:
\begin{enumerate}
\item[(F1)] By the mean value theorem,
\(|\theta_{ij}(\rho^1)-\theta_{ij}(\rho^2)|
\le C_{\partial\theta}(|\rho_i^1-\rho_i^2|+|\rho_j^1-\rho_j^2|)\).
\item[(F2)] The velocity is uniformly bounded:
\(\|u\|_\infty\le U_{\max}\).
\end{enumerate}

\textbf{Proof of (L1).} Write
\[
\theta_{ij}(\rho^1)u_{ij}^1-\theta_{ij}(\rho^2)u_{ij}^2
=\theta_{ij}(\rho^1)e_{u,ij}
+(\theta_{ij}(\rho^1)-\theta_{ij}(\rho^2))u_{ij}^2.
\]
Let $A_i = \sum_j (\theta_{ij}(\rho^1)u_{ij}^1 - \theta_{ij}(\rho^2)u_{ij}^2)\nu_{ij}$. Using (F1) and (F2),
\[
|A_i|\le \sum_j|e_{u,ij}|\nu_{ij}
+ C_{\partial\theta}U_{\max}\sum_j(|e_{\rho,i}|+|e_{\rho,j}|)\nu_{ij}.
\]
By Cauchy-Schwarz and \eqref{eq:degree-bound},
\(
\sum_j|e_{u,ij}|\nu_{ij}
\le\sqrt{D_\nu}
\Bigl(\sum_j|e_{u,ij}|^2\nu_{ij}\Bigr)^{1/2},
\)
\(
\sum_j|e_{\rho,j}|\nu_{ij}
\le\sqrt{D_\nu}
\Bigl(\sum_j|e_{\rho,j}|^2\nu_{ij}\Bigr)^{1/2},
\)
while \(\sum_j|e_{\rho,i}|\nu_{ij}\le D_\nu|e_{\rho,i}|\).
Squaring, summing over \(i\), and taking square roots yields
\[
\|A\|_{\ell^2}\le L_\theta(\|e_\rho\|_{\ell^2}+\|e_u\|_{\ell^2}),
\qquad
L_\theta=O\bigl(\sqrt{D_\nu}
+ C_{\partial\theta}U_{\max}D_\nu\bigr).
\]

\textbf{Proof of (L2).} By Assumption~\ref{ass:local_regular},
\(|G_i(\rho^1)-G_i(\rho^2)|\le \rho_{min}^{-1}|e_{\rho,i}|\), hence
\[
\|G(\rho^1)-G(\rho^2)\|_{\ell^2}\le L_G\|e_\rho\|_{\ell^2},
\qquad
L_G=\rho_{min}^{-1}.
\]

\textbf{Proof of (L3).} Define
\(S_i(\rho,u)=\sum_k\partial_1\theta(\rho_i,\rho_k)u_{ik}^2\nu_{ik}\), so that
\(\mathcal C_{ij}(\rho,u)=\frac12(S_i(\rho,u)-S_j(\rho,u))\). Add and subtract $\partial_1\theta(\rho_i^1,\rho_k^1)(u_{ik}^2)^2$ and use the bounds on $u$, $\partial_1\theta$, and $\partial_{ab}^2\theta$. This gives
\[
|S_i(\rho^1,u^1)-S_i(\rho^2,u^2)|
\le C_{\theta,2}U_{\max}^2\sum_k(|e_{\rho,i}|+|e_{\rho,k}|)\nu_{ik}
+2C_{\partial\theta}U_{\max}\sum_k|e_{u,ik}|\nu_{ik}.
\]
By Cauchy-Schwarz and \eqref{eq:degree-bound}, squaring and summing
over \(i\) yields
\(
\|\mathcal C(\rho^1,u^1)-\mathcal C(\rho^2,u^2)\|_{\ell^2}
\le L_{\mathcal C,\rho}\|e_\rho\|_{\ell^2}+L_{\mathcal C,u}\|e_u\|_{\ell^2},
\quad
L_{\mathcal C,\rho}=O(C_{\theta,2}U_{\max}^2D_\nu),\quad
L_{\mathcal C,u}=O(C_{\partial\theta}U_{\max}\sqrt{D_\nu}).
\)
\end{proof}

\begin{proof}[Proof of Lemma~\ref{lem:temporal_error}]
Let \((\rho(t),u(t))\) be the continuous solution and
\((\rho^k,u^k)\) its time-discrete approximation at \(t_k=k\Delta t\).
Define the global errors
\(e_\rho^k=\rho^k-\rho(t_k)\) and \(e_u^k=u^k-u(t_k)\).

\textbf{Step 1: Local truncation error.}
Under the stated regularity assumptions, Taylor expansion of the
exact solution gives
\[
\begin{aligned}
	\rho_i(t_{k+1})
	&= \rho_i(t_k)
	-\Delta t\sum_j
	\theta_{ij}(\rho(t_k))u_{ij}(t_{k+1})\nu_{ij}
	+O(\Delta t^2),\\
	u_{ij}(t_{k+1})
	&=u_{ij}(t_k)
	+\Delta t\bigl(
	G_i(t_k)-G_j(t_k)
	+\mathcal C_{ij}(\rho(t_k),u(t_k))
	-\gamma_k u_{ij}(t_k)
	\bigr)
	+O(\Delta t^2),
\end{aligned}
\]
where the remainder bounds are uniform for
\(t_k,t_{k+1}\in[0,T]\).
Hence the local truncation errors satisfy
\(\|\tau_\rho^k\|_{\ell^2}\le C_\tau\Delta t\) and
\(\|\tau_u^k\|_{\ell^2}\le C_\tau\Delta t\).

\textbf{Step 2: Error recursion.}
Subtracting the exact evolution from the numerical scheme gives
\[
\frac{e_\rho^{k+1}-e_\rho^k}{\Delta t}
= -\sum_j\bigl(\theta_{ij}(\rho^k)u_{ij}^{k+1}
-\theta_{ij}(\rho(t_k))u_{ij}(t_{k+1})\bigr)\nu_{ij}+\tau_\rho^k,
\]
\[
\frac{e_u^{k+1}-e_u^k}{\Delta t}
= \bigl(G_i^k-G_j^k-(G_i(t_k)-G_j(t_k))\bigr)
+\bigl(\mathcal{C}_{ij}(\rho^k,u^k)-\mathcal{C}_{ij}(\rho(t_k),u(t_k))\bigr)
-\gamma_k e_u^{k+1}+\tau_u^k.
\]

\textbf{Step 3: Lipschitz bounds.}
Applying Lemma~\ref{lem:lipschitz} and the triangle inequality,
\[
\left\|\frac{e_\rho^{k+1}-e_\rho^k}{\Delta t}\right\|_{\ell^2}
\le L_\theta(\|e_\rho^k\|_{\ell^2}+\|e_u^{k+1}\|_{\ell^2})
+\|\tau_\rho^k\|_{\ell^2},
\]
\[
\left\|\frac{e_u^{k+1}-e_u^k}{\Delta t}\right\|_{\ell^2}
\le L_2(\|e_\rho^k\|_{\ell^2}+\|e_u^k\|_{\ell^2})
+\gamma_k\|e_u^{k+1}\|_{\ell^2}+\|\tau_u^k\|_{\ell^2},
\]
where \(L_2\) depends on \(L_G,L_{\mathcal C,\rho},L_{\mathcal C,u}\). For
\(\Delta t\) sufficiently small such that \(\gamma_k\Delta t\le 1/2\),
the term \(\gamma_k\|e_u^{k+1}\|\) can be absorbed into the left-hand
side.

\textbf{Step 4: Discrete Gronwall.}
The velocity recursion first bounds $\|e_u^{k+1}\|$ in terms of
$\|e_\rho^k\|$, $\|e_u^k\|$, and the local truncation error. Substitution
into the density bound then closes the coupled estimate. Combining the two bounds and setting
\(e^k=\|e_\rho^k\|_{\ell^2}+\|e_u^k\|_{\ell^2}\),
we obtain
\[
e^{k+1}\le(1+C\Delta t)e^k+C\Delta t\|\tau^k\|,
\qquad \|\tau^k\|=O(\Delta t).
\]
Since \(e^0=0\), unrolling the recursion and applying the discrete
Gronwall inequality gives
\(
e^k\le C\Delta t\,e^{Ct_k}.
\)
Therefore, there exists \(C_{\mathrm{temp}}>0\), independent of
\(\Delta t\) and \(k\), such that
\[
\|e_\rho^k\|_{\ell^2}^2+\|e_u^k\|_{\ell^2}^2
\le C_{\mathrm{temp}}(\Delta t)^2 e^{CT}.
\]
This proves the lemma.
\end{proof}

\subsection{Sparse graph error}\label{app:graph error}
We use the sparse graph construction and notation introduced in
Section~\ref{sec:graph_discretization}. The theoretical sparse graph construction performs \(m\)
independent draws with replacement for each node; duplicate selections
are collapsed, and draws that fall in \(\mathrm{KNN}(i)\) are ignored in
the long-range part since the corresponding KNN edge is already included
deterministically. 

The sparse graph has
\[
E=\mathcal O\!\left(N(k_{\mathrm{local}}+m)\right)
\]
edges, giving the same per-step cost for evaluating the nonlocal
interaction. When \(k_{\mathrm{local}}+m\ll N\), this avoids the
\(\mathcal O(N^2)\) cost of a fully connected nonlocal graph.
As $\alpha$ increases, the L{\'e}vy kernel places progressively
less relative weight on long distances. The pure local $\alpha=2$
baseline uses a separate construction with long-range sampling disabled.

The symmetrized kernel \(\tilde\nu_{ij}^{\mathrm{sym}}\) used in the
sparse operator is defined in \eqref{eq:symmetric_kernel}; we now record
its unbiasedness and symmetry, and then define the sparse approximation
error \(\mathcal E_i\) analyzed in this appendix. Throughout this appendix, we work under Assumption~\ref{ass:graph}
(a finite graph sampled once) and Assumption~\ref{ass:small_regime}
(small-sampling regime).

\begin{lemma}[Unbiasedness and symmetry]\label{lem:unbiased}
Let \(\tilde{\nu}_{ij}^{\mathrm{sym}}\) be defined as in
\eqref{eq:symmetric_kernel}. Then

(i) $\mathbb{E}[\tilde{\nu}_{ij}^{\mathrm{sym}}] = \nu_{ij}$ (unbiasedness),

(ii) $\tilde{\nu}_{ij}^{\mathrm{sym}} = \tilde{\nu}_{ji}^{\mathrm{sym}}$ for each realization (pathwise symmetry).

Consequently, $\mathbb{E}[(\mathcal{L}_N^{\mathrm{sparse}}f)_i] = (\mathcal{L}_N^{\mathrm{full}}f)_i$ for each node $i$.
\end{lemma}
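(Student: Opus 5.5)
The plan is to compute directly from the definitions in \eqref{eq:symmetric_kernel}. For (i), the only random quantity in \(\tilde\nu_{ij}^{\mathrm{sym}}=\zeta_{ij}\nu_{ij}/Q_{ij}\) is the indicator \(\zeta_{ij}\); the weight \(\nu_{ij}\) and the normalizer \(Q_{ij}\) are deterministic. We will compute \(\mathbb E[\zeta_{ij}]\). The draws \(J_{i,1},\dots,J_{i,m}\) from node \(i\) are i.i.d.\ with law \(p_{i\cdot}\), and they are independent of the draws from node \(j\ne i\). Hence \(\mathbb P(\eta_{ij}=0)=(1-p_{ij})^m\) and \(\mathbb P(\eta_{ji}=0)=(1-p_{ji})^m\), and \(\eta_{ij}\) and \(\eta_{ji}\) are independent. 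It follows that
\[
\mathbb E[\zeta_{ij}]=1-\mathbb E[1-\eta_{ij}]\,\mathbb E[1-\eta_{ji}]=1-(1-p_{ij})^m(1-p_{ji})^m=Q_{ij}.
\]
Dividing by \(Q_{ij}\) gives \(\mathbb E[\tilde\nu_{ij}^{\mathrm{sym}}]=\nu_{ij}\). We must also check \(Q_{ij}>0\). This holds because \(p_{ij}>0\) for every \(j\ne i\): the kernel \(|x_i-x_j|^{-(d+\alpha)}\) is strictly positive on distinct grid points, so \((1-p_{ij})^m<1\) whenever \(m\ge1\). Assumption~\ref{ass:small_regime} is not needed for this step.

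For (ii), we use that \(\nu_{ij}=\nu_{ji}\) because the kernel depends only on \(|x_i-x_j|\). The normalizer satisfies \(Q_{ij}=Q_{ji}\), since its defining formula is symmetric under swapping \(p_{ij}\) and \(p_{ji}\). Note that \(p_{ij}\ne p_{ji}\) in general, because the normalizing sums differ, but the product form makes this irrelevant. Finally, \(\zeta_{ij}=1-(1-\eta_{ij})(1-\eta_{ji})\) is symmetric in its two indicators for every realization. So each factor of \(\tilde\nu_{ij}^{\mathrm{sym}}\) is pathwise symmetric.

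For the consequence, we take the expectation of \eqref{eq:sparse_operator}. The KNN sum is deterministic and already equals the local part of \(\mathcal L_N^{\mathrm{full}}\). In the long-range sum over \(\mathcal C_i\), linearity of expectation together with (i) replaces each \(\tilde\nu_{ij}^{\mathrm{sym}}\) by \(\nu_{ij}\). Since \(\mathcal E_{\rm loc}\) and \(\mathcal C_i\) partition \(\{j\ne i\}\), the two sums reassemble to \((\mathcal L_N^{\mathrm{full}}f)_i\).

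The one delicate point is the bookkeeping convention that draws landing in \(\mathrm{KNN}(i)\) are ignored. We need this to change neither \(\zeta_{ij}\) nor \(Q_{ij}\) for \(j\in\mathcal C_i\). It does not: for a pair \(\{i,j\}\) with \(j\in\mathcal C_i\), the indicators \(\eta_{ij},\eta_{ji}\) are defined by the unrestricted draws, so \(Q_{ij}\) is exactly as stated. Draws discarded on KNN pairs only affect weights that never appear in the long-range sum.
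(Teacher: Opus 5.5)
Your proposal is correct and follows the same route as the paper: unbiasedness from \(\mathbb E[\zeta_{ij}]=Q_{ij}\), pathwise symmetry from the symmetry of \(\zeta_{ij}\), \(Q_{ij}\) and \(\nu_{ij}\), and the operator identity by linearity of expectation. You also spell out points the paper leaves implicit. These are the independence of \(\eta_{ij}\) and \(\eta_{ji}\) across nodes, which justifies \(\mathbb E[\zeta_{ij}]=Q_{ij}\); the positivity \(Q_{ij}>0\); and the fact that \(\mathcal E_{\rm loc}\) and \(\mathcal C_i\) partition \(\{j\neq i\}\), so discarded KNN draws leave the long-range weights unchanged.
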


\begin{proof}
Since \(\mathbb E[\zeta_{ij}]=Q_{ij}\) and
\(\tilde{\nu}_{ij}^{\mathrm{sym}}=\zeta_{ij}\nu_{ij}/Q_{ij}\),
\(\mathbb E[\tilde{\nu}_{ij}^{\mathrm{sym}}]=\nu_{ij}\). Symmetry follows from
\(\zeta_{ij}=\zeta_{ji}\), \(Q_{ij}=Q_{ji}\), and \(\nu_{ij}=\nu_{ji}\). The
operator statement follows by linearity of expectation.
\end{proof}
For each node $i$, define the sparse approximation error
\(
\mathcal{E}_i := (\mathcal{L}_N^{\mathrm{sparse}}f)_i - (\mathcal{L}_N^{\mathrm{full}}f)_i.
\)
With the unbiasedness from Lemma~\ref{lem:unbiased}, we have $\mathbb{E}[\mathcal{E}_i]=0$.

\begin{lemma}[Variance bound]
\label{lem:variance}
Under Assumption~\ref{ass:graph} and with the notation of Section~\ref{sec:sparse_error} for
\(\mathcal C_i,p_{ij},\eta_{ij},\zeta_{ij},Q_{ij},\tilde\nu_{ij}^{\mathrm{sym}}\),
define
\(
b_{ij}:=\nu_{ij}(f_j-f_i),\quad
Y_{ij}:=\tilde\nu_{ij}^{\mathrm{sym}}(f_j-f_i)=\frac{\zeta_{ij}}{Q_{ij}}b_{ij},
\)
and
\(\mathcal E_i:=(\mathcal L_N^{\mathrm{sparse}}f)_i-(\mathcal L_N^{\mathrm{full}}f)_i\).
Assume
\begin{equation}\label{eq:variance_ass}
\sup_i\sum_{j\in\mathcal C_i}\frac{b_{ij}^2}{p_{ij}+p_{ji}}\le C_{\mathrm{var}}.
\end{equation}
Then
\[
\operatorname{Var}(\mathcal E_i)\le \frac{C_F}{m},
\qquad
C_F:=2C_{\mathrm{var}},
\]
where \(C_F>0\) is independent of \(m\).
\end{lemma}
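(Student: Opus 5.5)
Since the local KNN edges are deterministic, $\mathcal E_i$ only involves the long-range candidates: $\mathcal E_i=\sum_{j\in\mathcal C_i}(Y_{ij}-b_{ij})=\sum_{j\in\mathcal C_i}\bigl(\tfrac{\zeta_{ij}}{Q_{ij}}-1\bigr)b_{ij}$. By Lemma~\ref{lem:unbiased} each summand has zero mean. The plan is to bound $\operatorname{Var}(\mathcal E_i)$ by the sum of the individual variances, and then bound each individual variance using the small-sampling regime of Assumption~\ref{ass:small_regime} and the hypothesis \eqref{eq:variance_ass}.

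\emph{Step 1 (dropping cross-covariances).} For fixed $i$, the indicators $\zeta_{ij}$, $j\in\mathcal C_i$, depend on the draws $J_{i,\cdot}$ (through $\eta_{ij}$) and on the draws of the partner nodes $J_{j,\cdot}$ (through $\eta_{ji}$). The draws of different nodes are independent. Within the multinomial draws of node $i$, the indicators $\eta_{ij}$ for distinct $j$ are negatively correlated: $\mathbb P(\eta_{ij}=\eta_{ik}=0)=(1-p_{ij}-p_{ik})^m\le(1-p_{ij})^m(1-p_{ik})^m$. From this I would deduce $\operatorname{Cov}(\zeta_{ij},\zeta_{ik})\le0$ for $j\ne k$. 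Since $\zeta$ is a monotone (OR) function of independent blocks, this should follow by conditioning on the partner draws and using negative association of occupancy indicators.

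This sign alone is not enough, because the products $b_{ij}b_{ik}$ can have either sign. My fallback is therefore $\operatorname{Var}(\sum_j X_j)\le 2\sum_j\operatorname{Var}(X_j)$, proved by splitting each $X_j$ into positive and negative parts according to the sign of $b_{ij}$. Within each sign class the covariances are $\le0$, so each class has variance at most the sum of its individual variances, and $(a+b)^2\le 2a^2+2b^2$ gives the factor $2$. This explains the constant $C_F=2C_{\mathrm{var}}$.

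\emph{Step 2 (individual variance).} For each summand, $\operatorname{Var}\bigl(\tfrac{\zeta_{ij}}{Q_{ij}}b_{ij}\bigr)=b_{ij}^2\tfrac{1-Q_{ij}}{Q_{ij}}\le \tfrac{b_{ij}^2}{Q_{ij}}$. Let $x=m(p_{ij}+p_{ji})$, which satisfies $x\le1$ by Assumption~\ref{ass:small_regime}. Then $1-Q_{ij}=(1-p_{ij})^m(1-p_{ji})^m\le e^{-x}$, so $Q_{ij}\ge 1-e^{-x}\ge(1-e^{-1})x\ge x/2$ on $[0,1]$. Hence each individual variance is at most $\tfrac{2b_{ij}^2}{m(p_{ij}+p_{ji})}$.

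Summing over $j\in\mathcal C_i$ and invoking \eqref{eq:variance_ass} gives $\sum_j\operatorname{Var}\le 2C_{\mathrm{var}}/m$. Combined with Step 1, this yields a bound of order $C_{\mathrm{var}}/m$. To match exactly $C_F=2C_{\mathrm{var}}$, I would tighten one of the two factors of $2$: either use the sharper constant $1/(1-e^{-1})\approx1.58$ in Step 2, or establish the cross-covariance bound directly in Step 1 rather than through the sign split.

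\emph{Main obstacle.} The hard step is controlling the cross terms $\operatorname{Cov}(\zeta_{ij},\zeta_{ik})b_{ij}b_{ik}$ in Step 1. The reciprocal-direction draws $\eta_{ji}$ couple the symmetrized indicators across nodes, so the negative-correlation argument has to be made carefully. If exact constant-tracking fails, I would accept a slightly larger absolute constant in place of $2$. The $1/m$ scaling is unaffected, and that is what Proposition~\ref{prop:sparse_full} needs.
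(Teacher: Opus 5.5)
Your overall structure matches the paper's proof: write $\mathcal E_i=\sum_{j\in\mathcal C_i}(Y_{ij}-b_{ij})$, use nonpositive covariances of the $\zeta_{ij}$ to drop the cross terms within each sign class of $b_{ij}$, and pay a factor $2$ to recombine the two classes. Your Step~1 is sound. The obstacle you flag is easily dispatched: for $j\neq k$, the reverse indicators $\eta_{ji},\eta_{ki}$ come from two distinct nodes whose draws are independent of each other and of node $i$. So $\mathbb P(\zeta_{ij}=\zeta_{ik}=0)$ factorizes, and your inequality $(1-p_{ij}-p_{ik})^m\le(1-p_{ij})^m(1-p_{ik})^m$ gives $\operatorname{Cov}(\zeta_{ij},\zeta_{ik})\le0$ directly. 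That pairwise sign is all the variance bound needs; the paper instead invokes full negative association, which it reuses for the Bernstein step.

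The gap is in Step~2, and it is exactly what costs you the stated constant. Bounding $b_{ij}^2(1-Q_{ij})/Q_{ij}$ by $b_{ij}^2/Q_{ij}$ discards the factor $1-Q_{ij}$. You then need Assumption~\ref{ass:small_regime}, which is not a hypothesis of this lemma, to get $Q_{ij}\ge(1-e^{-1})x$. The best you obtain is $\operatorname{Var}(\mathcal E_i)\le\frac{2}{1-e^{-1}}\frac{C_{\mathrm{var}}}{m}\approx3.16\,\frac{C_{\mathrm{var}}}{m}$, not $2C_{\mathrm{var}}/m$. Your repair (a) is precisely this bound, so it does not reach $C_F=2C_{\mathrm{var}}$, and repair (b) is unnecessary.

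The missing step is to keep the numerator. With $x=m(p_{ij}+p_{ji})$ you have $1-Q_{ij}\le e^{-x}$, hence $Q_{ij}\ge1-e^{-x}$, and since $q\mapsto(1-q)/q$ is decreasing,
\[
\frac{1-Q_{ij}}{Q_{ij}}\le\frac{e^{-x}}{1-e^{-x}}=\frac{1}{e^{x}-1}\le\frac1x\qquad\text{for every }x>0.
\]
Thus $\operatorname{Var}(Y_{ij})\le b_{ij}^2/\bigl(m(p_{ij}+p_{ji})\bigr)$. Summing and using \eqref{eq:variance_ass} gives $\sum_{j\in\mathcal C_i}\operatorname{Var}(Y_{ij})\le C_{\mathrm{var}}/m$. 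Your factor $2$ from Step~1 then yields exactly $C_F=2C_{\mathrm{var}}$, with no small-sampling assumption.
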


\begin{proof}
\textbf{(i) Error decomposition.}
KNN edges are deterministic and contribute no sampling variance, so
\(\mathcal E_i=\sum_{j\in\mathcal C_i}(Y_{ij}-b_{ij})\) with
\(Y_{ij}=\zeta_{ij}b_{ij}/Q_{ij}\). Since
\(\zeta_{ij}\sim\mathrm{Bernoulli}(Q_{ij})\),
\(\mathbb E[Y_{ij}]=b_{ij}\) and
\(\operatorname{Var}(Y_{ij})=b_{ij}^2(1-Q_{ij})/Q_{ij}\).

\textbf{(ii) Single-edge variance.}
Since \(1-Q_{ij}=(1-p_{ij})^m(1-p_{ji})^m\le e^{-x_{ij}}\) with
\(x_{ij}=m(p_{ij}+p_{ji})\), we have \(Q_{ij}\ge1-e^{-x_{ij}}\).
Therefore
\[
\frac{1-Q_{ij}}{Q_{ij}}
\le
\frac{e^{-x_{ij}}}{1-e^{-x_{ij}}}
=
\frac{x_{ij}}{e^{x_{ij}}-1}\cdot\frac{1}{x_{ij}}
\le
\frac{1}{x_{ij}}
=
\frac{1}{m(p_{ij}+p_{ji})},
\]
where we used \(x/(e^x-1)\le1\) for \(x>0\). Hence, by \eqref{eq:variance_ass},
\begin{equation}
\sum_{j\in\mathcal C_i}\operatorname{Var}(Y_{ij})
\le
\frac{1}{m}\sum_{j\in\mathcal C_i}\frac{b_{ij}^2}{p_{ij}+p_{ji}}
\le
\frac{C_{\mathrm{var}}}{m}.
\label{eq:variance_bound_Y}
\end{equation}

\textbf{(iii) Negative association.}
For fixed \(i\), the indicators \(\{\eta_{ij}\}_{j\in\mathcal C_i}\)
generated by the i.i.d.\ draws \(J_{i,1},\dots,J_{i,m}\) are negatively
associated \cite{Joag-Dev_NA}; the reverse indicators
\(\{\eta_{ji}\}_{j\in\mathcal C_i}\) come from independent pools of distinct nodes, and independence implies negative association; hence
\(\{\eta_{ij},\eta_{ji}:j\in\mathcal C_i\}\) is negatively associated.
Since \(\zeta_{ij}=\eta_{ij}\vee\eta_{ji}\) is coordinatewise
nondecreasing, closure of negative association under monotone
transformations gives that \(\{\zeta_{ij}\}_{j\in\mathcal C_i}\) are
negatively associated \cite{2015Novel}.

\textbf{(iv) Sign splitting.}
Let \(\mathcal C_i^\pm=\{j:b_{ij}\gtrless0\}\) and
\(\mathcal E_i=\mathcal E_i^++\mathcal E_i^-\) with
\(\mathcal E_i^\pm=\sum_{j\in\mathcal C_i^\pm}(Y_{ij}-b_{ij})\).
Within each sign class the coefficients \(b_{ij}\) share the same sign, so
all covariance terms are nonpositive and
\(\operatorname{Var}(\mathcal E_i^\pm)\le\sum_{j\in\mathcal C_i^\pm}\operatorname{Var}(Y_{ij})\).
Using \(\operatorname{Var}(X+Y)\le2\operatorname{Var}(X)+2\operatorname{Var}(Y)\),
\(
\operatorname{Var}(\mathcal E_i)
\le2\operatorname{Var}(\mathcal E_i^+)+2\operatorname{Var}(\mathcal E_i^-)
\le2\sum_{j\in\mathcal C_i}\operatorname{Var}(Y_{ij}).
\)

\textbf{(v)}
Combining with \eqref{eq:variance_bound_Y},
\(
\operatorname{Var}(\mathcal E_i)\le\frac{2C_{\mathrm{var}}}{m}=\frac{C_F}{m},
\)
which proves the claim.
\end{proof}

\begin{lemma}[Bernstein]
\label{lem:bernstein}
Under the assumptions of Lemma~\ref{lem:variance},
let \(M_{0}:=\max_{j\in\mathcal C_i}|b_{ij}|/Q_{ij}\), a deterministic bound on the support of $|X_{ij}|$. Then for every \(t>0\),
\begin{equation}
\mathbb P\bigl(|\mathcal E_i|\ge t\bigr)
\le
4\exp\Big(-\frac{t^2}{8(C_F/m+M_{0}t/6)}\Big).
\label{eq:bernstein_general}
\end{equation}
If in addition Assumption~\ref{ass:small_regime} holds, then
\begin{equation}
M_{0}\le B_f/m,
\qquad
B_f:=\sup_{i,j\in\mathcal C_i}\frac{|b_{ij}|}{c(p_{ij}+p_{ji})}<\infty,
\label{eq:bernstein_increment}
\end{equation}
and for every fixed \(\hat{c_2}>0\) independent of \(m\), setting
\(C_*:=C_F+B_f\hat{c_2}/6\) and restricting \(0<t\le \hat{c_2}\),
\begin{equation}
\mathbb P\bigl(|\mathcal E_i|\ge t\bigr)\le4\exp(-\hat{c_1}mt^2),
\qquad \hat{c_1}:=\frac{1}{8C_*}.
\label{eq:bernstein_subgaussian}
\end{equation}
\end{lemma}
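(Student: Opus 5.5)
The plan is to treat $\mathcal E_i=\mathcal E_i^++\mathcal E_i^-$ by the same sign splitting as in the proof of Lemma~\ref{lem:variance}. Each piece is a sum of centered, bounded, negatively associated summands, and Bernstein's inequality remains valid for such sums. Write $X_{ij}:=Y_{ij}-b_{ij}=(\zeta_{ij}/Q_{ij}-1)b_{ij}$ for $j\in\mathcal C_i$. Then $\mathbb E X_{ij}=0$, and since $\zeta_{ij}\in\{0,1\}$, we have $|X_{ij}|\le |b_{ij}|\max(1/Q_{ij}-1,1)\le |b_{ij}|/Q_{ij}\le M_0$ whenever $Q_{ij}\le 1/2$. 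Otherwise we use the crude bound $|X_{ij}|\le |b_{ij}|/Q_{ij}$. Either way $M_0$ bounds the support.

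Within $\mathcal C_i^+$ the map $\zeta\mapsto X_{ij}$ is nondecreasing, and within $\mathcal C_i^-$ it is nonincreasing. By closure of negative association under monotone maps of disjoint coordinate blocks, each family $\{X_{ij}\}_{j\in\mathcal C_i^\pm}$ is negatively associated. Negative association gives $\mathbb E e^{s\sum X_{ij}}\le\prod\mathbb E e^{sX_{ij}}$ for $s$ of either sign, because $e^{sx}$ is monotone. The standard Bernstein MGF bound per summand, $\mathbb E e^{sX}\le\exp\bigl(s^2\operatorname{Var}(X)/(2(1-|s|M_0/3))\bigr)$, then yields
\[
\mathbb P(|\mathcal E_i^\pm|\ge t/2)\le 2\exp\Bigl(-\frac{(t/2)^2}{2(V_\pm+M_0t/6)}\Bigr).
\]
Here $V_\pm=\sum_{j\in\mathcal C_i^\pm}\operatorname{Var}(Y_{ij})\le C_{\mathrm{var}}/m\le C_F/m$ by \eqref{eq:variance_bound_Y}. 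A union bound over the two signs gives the prefactor $4$. The exponent $t^2/(8(V+M_0t/6))$ matches \eqref{eq:bernstein_general} after checking the constants, loosening the $M_0$ term if needed.

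For \eqref{eq:bernstein_increment}, use Assumption~\ref{ass:small_regime}. With $x_{ij}=m(p_{ij}+p_{ji})\le1$, we have $Q_{ij}\ge 1-e^{-x_{ij}}\ge(1-e^{-1})x_{ij}=:c\,x_{ij}$, by concavity of $1-e^{-x}$ on $[0,1]$. Hence $|b_{ij}|/Q_{ij}\le |b_{ij}|/(c\,m(p_{ij}+p_{ji}))\le B_f/m$. $B_f$ is finite because the graph is finite, $|b_{ij}|\le 2C_f\nu_{\max}$, and $p_{ij}\ge p_{\min}>0$. Finally, for $0<t\le\hat c_2$ we have $C_F/m+M_0t/6\le (C_F+B_f\hat c_2/6)/m=C_*/m$, which gives $4\exp(-mt^2/(8C_*))$, i.e.\ \eqref{eq:bernstein_subgaussian}.

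The main obstacle is justifying Bernstein under negative association rather than independence. This needs the MGF product inequality, and that inequality requires monotone functions of disjoint blocks of the $\zeta_{ij}$. This is exactly why the sign splitting is required: without it, the $X_{ij}$ are not all monotone in the same direction. A secondary point is tracking constants so that the factor $8$ and the prefactor $4$ come out as stated. A cruder constant would still suffice for the later use in Proposition~\ref{prop:sparse_full}.
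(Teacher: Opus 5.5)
Your proposal is correct and follows the paper's proof. Both arguments split into sign classes, obtain negative association of each class from monotone maps of the $\zeta_{ij}$, and apply Bernstein at threshold $t/2$ per class with a union bound. Both then use $Q_{ij}\ge(1-e^{-1})\,m(p_{ij}+p_{ji})$ to get $M_0\le B_f/m$ and the sub-Gaussian form. Your explicit MGF-product derivation of Bernstein under negative association and your concavity justification of $Q_{ij}\ge c\,x_{ij}$ only fill in what the paper cites or asserts. The case split on $Q_{ij}\le 1/2$ in your support bound is unnecessary, since $\max(1/Q_{ij}-1,\,1)\le 1/Q_{ij}$ always holds.
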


\begin{proof}
Fix \(i\) and set \(X_{ij}:=Y_{ij}-b_{ij}\), so
\(\mathcal E_i=\sum_{j\in\mathcal C_i}X_{ij}\). By Lemma~\ref{lem:variance},
\(\sum_{j\in\mathcal C_i}\operatorname{Var}(X_{ij})\le C_F/m\).

\textbf{(a) Boundedness.}
For the fixed finite grid,
\(Q_{\min}:=\min_{i,j\in\mathcal C_i}Q_{ij}>0\). Since
\(b_{ij}=\nu_{ij}(f_j-f_i)\) is bounded,
\[
|X_{ij}|
=
|b_{ij}|\left|\frac{\zeta_{ij}}{Q_{ij}}-1\right|
\le
\frac{|b_{ij}|}{Q_{ij}}
\le
M_0
:=
\max_{\ell\in\mathcal C_i}
\frac{|b_{i\ell}|}{Q_{i\ell}}
<\infty.
\]

\textbf{(b) Negative association.} Splitting
\(\mathcal C_i^\pm=\{j:b_{ij}\gtrless0\}\), the coefficients \(b_{ij}\)
within each sign class have the same sign, so \(X_{ij}=Y_{ij}-b_{ij}\)
is a monotone function of \(\zeta_{ij}\) with the same monotonicity
within each class. Since \(\{\zeta_{ij}\}_{j\in\mathcal C_i}\) are
negatively associated (Lemma~\ref{lem:variance}(iii)), the closure of
negative association under monotone transformations gives that each
subfamily \(\{X_{ij}\}_{j\in\mathcal C_i^\pm}\) is negatively associated.

\textbf{(c) Small-sampling regime.}
Under Assumption~\ref{ass:small_regime},
\(Q_{ij}\ge c\,m(p_{ij}+p_{ji})\), where $c=1-1/ e $. Define
\(
p_{\min}:=\min_{i,j\in\mathcal C_i}p_{ij}>0,
\)
which is positive on the fixed finite grid. Since
\(b_{ij}=\nu_{ij}(f_j-f_i)\) is bounded and
\(p_{ij}+p_{ji}\ge2p_{\min}\), the constant
\(
B_f:=\sup_{i,j\in\mathcal C_i}\frac{|b_{ij}|}{c(p_{ij}+p_{ji})}<\infty
\)
is finite, and
\[
|X_{ij}|\le\frac{|b_{ij}|}{Q_{ij}}\le\frac{|b_{ij}|}{c\,m(p_{ij}+p_{ji})}\le\frac{B_f}{m},
\]
so \(M_{0}\le B_f/m\). If in addition \(\|f\|_\infty\le C_f\) and
\(\nu_{ij}\le\nu_{\max}\), then \(|b_{ij}|\le2C_f\nu_{\max}\) and hence
\(
B_f\le\frac{C_f\nu_{\max}}{c\,p_{\min}}.
\)

\textbf{(d) Bernstein.} Applying the Bernstein inequality for negatively
associated bounded random variables \cite{2019Optimal} to each sign class
with threshold \(t/2\), and using
\[
\sum_{j\in\mathcal C_i^\pm}\operatorname{Var}(X_{ij})
\le
\sum_{j\in\mathcal C_i}\operatorname{Var}(X_{ij})
\le
\frac{C_F}{m},
\qquad
M_{0}\le\frac{B_f}{m},
\]
we obtain
\(
\mathbb P\!\left(\Big|\sum_{j\in\mathcal C_i^\pm}X_{ij}\Big|\ge\frac t2\right)
\le
2\exp\!\left(-\frac{t^2/4}{2\left(C_F/m+B_ft/(6m)\right)}\right).
\)
Since \(\mathcal E_i=\sum_{j\in\mathcal C_i^+}X_{ij}+\sum_{j\in\mathcal C_i^-}X_{ij}\),
a union bound gives \eqref{eq:bernstein_general}.

\textbf{(e) Sub-Gaussian form.} Take any \(\hat{c_2}>0\) independent of \(m\)
and set \(C_*:=C_F+B_f\hat{c_2}/6\). Then \(C_F+B_ft/6\le C_*\) for
\(0<t\le\hat{c_2}\), hence
\(\mathbb P(|\mathcal E_i|\ge t)\le4\exp(-mt^2/(8C_*))\), i.e.\
\eqref{eq:bernstein_subgaussian} with \(\hat{c_1}=1/(8C_*)\).
\end{proof}

\begin{lemma}[Uniform high-probability sparse-graph error]
\label{lem:uniform_error}
Under the assumptions of Lemma~\ref{lem:bernstein}, for every
\(\delta\in(0,1)\), with probability at least \(1-\delta\),
\begin{equation}
\max_{1\le i\le N}|\mathcal E_i|
\le C_{\mathrm{graph}}\sqrt{\frac{\log(4N/\delta)}{m}},
\label{eq:uniform_sparse_error}
\end{equation}
provided
\begin{equation}
\sqrt{\frac{\log(4N/\delta)}{\hat c_1m}}\le \hat c_2,
\label{eq:uniform_sparse_condition}
\end{equation}
where \(\hat c_1,\hat c_2\) are as in Lemma~\ref{lem:bernstein} and
\(C_{\mathrm{graph}}>0\) is independent of \(m\), but may depend on
\(p_{\min}\), \(C_f\), \(\nu_{\max}\), and the fixed-grid parameters.
Consequently,
\begin{equation}
\bigl\|\mathcal L_N^{\mathrm{sparse}}f-\mathcal L_N^{\mathrm{full}}f\bigr\|_{\ell^\infty}
\le
C_{\mathrm{graph}}\sqrt{\frac{\log(4N/\delta)}{m}}.
\label{eq:operator_uniform_error}
\end{equation}
\end{lemma}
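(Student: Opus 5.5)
The plan is a standard union bound over the $N$ nodes, built on the per-node sub-Gaussian tail \eqref{eq:bernstein_subgaussian} of Lemma~\ref{lem:bernstein}.

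First, fix $\delta\in(0,1)$ and take the common threshold
\[
t_\delta:=\sqrt{\frac{\log(4N/\delta)}{\hat c_1 m}}.
\]
The admissibility condition \eqref{eq:uniform_sparse_condition} says exactly that $t_\delta\le\hat c_2$. Hence $t_\delta$ lies in the range $0<t\le\hat c_2$ where \eqref{eq:bernstein_subgaussian} is valid with constants $\hat c_1,\hat c_2$ independent of $i$ and $m$. This uniformity holds because $C_F$ and $B_f$ are defined through suprema over $i$ and $j\in\mathcal C_i$.

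Second, apply \eqref{eq:bernstein_subgaussian} at each node $i$:
\[
\mathbb P\bigl(|\mathcal E_i|\ge t_\delta\bigr)\le 4\exp(-\hat c_1 m t_\delta^2)=4\cdot\frac{\delta}{4N}=\frac{\delta}{N}.
\]
A union bound over $i=1,\dots,N$ then gives $\mathbb P(\max_i|\mathcal E_i|\ge t_\delta)\le\delta$. No independence across nodes is needed, which is convenient because the symmetrized indicators $\zeta_{ij}$ couple the draws of nodes $i$ and $j$.

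Third, on the complementary event we have
\[
\max_i|\mathcal E_i|<t_\delta=C_{\mathrm{graph}}\sqrt{\log(4N/\delta)/m}
\qquad\text{with}\qquad
C_{\mathrm{graph}}:=\hat c_1^{-1/2}=\sqrt{8C_*}=\sqrt{8(C_F+B_f\hat c_2/6)}.
\]
This constant is independent of $m$. Through $C_F=2C_{\mathrm{var}}$ and $B_f\le C_f\nu_{\max}/(c\,p_{\min})$, it depends only on $p_{\min}$, $C_f$, $\nu_{\max}$ and the fixed grid. Finally, \eqref{eq:operator_uniform_error} is a restatement, since $\mathcal E_i$ is by definition the $i$-th entry of $\mathcal L_N^{\mathrm{sparse}}f-\mathcal L_N^{\mathrm{full}}f$, so the $\ell^\infty$ norm equals $\max_i|\mathcal E_i|$.

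I expect the only delicate point to be bookkeeping rather than analysis. One must confirm that the constants in Lemma~\ref{lem:bernstein} are genuinely uniform in $i$, and note that $C_{\mathrm{var}}$ in \eqref{eq:variance_ass} is finite under the stated bounds, using $p_{ij}+p_{ji}\ge 2p_{\min}$ and $|b_{ij}|\le 2C_f\nu_{\max}$. One must also keep the threshold inside the admissible Bernstein range, which is precisely what \eqref{eq:uniform_sparse_condition} guarantees.
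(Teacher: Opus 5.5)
Your proposal is correct and follows the paper's proof essentially verbatim. Both arguments apply the per-node tail bound of Lemma~\ref{lem:bernstein} at the threshold $t=\sqrt{\log(4N/\delta)/(\hat c_1 m)}$, which the admissibility condition places in $(0,\hat c_2]$, then take a union bound over the $N$ nodes (no cross-node independence needed) with $C_{\mathrm{graph}}=\hat c_1^{-1/2}$; your extra checks that $C_F$ and $B_f$ are uniform in $i$ only make explicit what the paper leaves implicit.
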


\begin{proof}
By Lemma~\ref{lem:bernstein}, for \(0<t\le\hat c_2\) and every \(i\),
\(\mathbb P(|\mathcal E_i|\ge t)\le4\exp(-\hat c_1mt^2)\). A union bound
over \(i=1,\dots,N\) gives
\(\mathbb P(\max_i|\mathcal E_i|\ge t)\le4N\exp(-\hat c_1mt^2)\).
Choosing \(t=\sqrt{\log(4N/\delta)/(\hat c_1m)}\), which satisfies
\eqref{eq:uniform_sparse_condition}, makes the right-hand side equal to
\(\delta\), yielding \eqref{eq:uniform_sparse_error} with
\(C_{\mathrm{graph}}=\hat c_1^{-1/2}\). Since
\(\|\mathcal L_N^{\mathrm{sparse}}f-\mathcal L_N^{\mathrm{full}}f\|_{\ell^\infty}
=\max_i|\mathcal E_i|\), this also yields
\eqref{eq:operator_uniform_error}.
\end{proof}

\begin{proof}[Proof of Proposition~\ref{prop:sparse_full}]
It follows directly from Lemma~\ref{lem:uniform_error},
which provides the stated probabilistic bound with the same \(\delta\).
The uniform bound on the pointwise error directly controls the \(\ell^\infty\)-norm of the error:
\(
\|\mathcal L_N^{\mathrm{sparse}}f - \mathcal L_N^{\mathrm{full}}f\|_{\ell^\infty} = \max_i |\mathcal{E}_i| \le C_{\mathrm{graph}}\sqrt{\frac{\log(4N/\delta)}{m}}.
\)
\end{proof}
\begin{proof}[Proof of Lemma~\ref{lem:graph_forcing}]
	Fix $k$. The node sums in the density and convection components
		have coefficients
		\[
	a_{ij}^{\rho,k}
	=\theta_{ij}(\rho_F^k)u_{F,ij}^k,
	\qquad
	a_{ij}^{C,k}
	=\partial_1\theta_{ij}(\rho_F^k)(u_{F,ij}^k)^2,
	\]
	and, for the intermediate density update,
	\[
	a_{ij}^{\rho,+,k}
	=\theta_{ij}(\rho_F^k)u_{F,ij}^{k+1}.
	\]
Their magnitudes are bounded by $C_\theta U_{\max}$ and
		$C_{\partial\theta}U_{\max}^2$, respectively.
		Using the positive $p_{\min}$ of Proposition~\ref{prop:sparse_full}
		and $p_{ij}+p_{ji}\ge p_{\min}$,
		the existing fixed-grid bounds give, for either family,
		\[
		\sup_i\sum_{j\in\mathcal C_i}
		\frac{|a_{ij}^{k}|^2\nu_{ij}^2}{p_{ij}+p_{ji}}
		\le
		\frac{(N-1)\nu_{\max}^2}{p_{\min}}
		\max\{C_\theta^2U_{\max}^2,
		C_{\partial\theta}^2U_{\max}^4\}.
		\]
		The variance constants depend on $N,h,\alpha,p_{\min}$ and
		the state bounds.
	
	For $X_{ij}^k=a_{ij}^k(\tilde\nu_{ij}^{\rm sym}-\nu_{ij})$,
		unbiasedness, the sign splitting used in Lemma~\ref{lem:variance},
		and Lemma~\ref{lem:bernstein} give
		$\mathbb P(|\sum_jX_{ij}^k|>t)\le4e^{-cmt^2}$
		in the same admissible range. The full trajectory is independent of
		the sparse graph draws. Applying the Bernstein bound to the three coefficient families and taking a union bound over \(N\) nodes and \(K+1\) time levels gives \(C_0N(K+1)e^{-cmt^2}\), $C_0:=12$, which yields the stated estimate. Convection is a difference of two
		node sums. Conversion to the fixed full-graph norm $\mathcal X$
		changes only $C_{\rm force}$.
\end{proof}

\begin{proof}[Proof of Theorem~\ref{thm:total_error}]
	Introduce the full-graph discrete solution $\rho_F^k$. Then
	\[
	\|\rho_h^k-\pi\|_{\ell^2(\pi)}
	\le \|\rho_S^k-\rho_F^k\|_{\ell^2(\pi)}
	+\|\rho_F^k-\rho^F(t_k)\|_{\ell^2(\pi)}
	+\|\rho^F(t_k)-\pi\|_{\ell^2(\pi)}.
	\]
	For the first term, subtract the two velocity updates and then the
		density updates in \eqref{eq:num_scheme}. The Lipschitz bounds and
		the forcing estimates of Lemma~\ref{lem:graph_forcing}, including
		the intermediate density stage, yield
		\[
		\|e^{k+1}\|_{\mathcal X}
		\le(1+C\Delta t)\|e^k\|_{\mathcal X}
		+C\Delta t\,C_{\rm force}
		\sqrt{\frac{\log(C_0N(K+1)/\delta_{\rm p})}{m}}.
		\]
		Here $e^k$ is the difference in relative density and full-edge
		velocity. The sparse weights have a deterministic fixed-grid bound:
		for $m\ge1$, $Q_{ij}\ge\max(p_{ij},p_{ji})\ge p_{\min}$,
		so $\tilde\nu_{ij}^{\rm sym}\le\nu_{\max}/p_{\min}$ on sampled
		edges. Hence the Lipschitz constant can be chosen independently
		of $m$ and the realization. Since $e^0=0$, discrete Gronwall gives
		the graph term, with $C_{\rm graph,T}$ bounded by a fixed multiple
		of $T e^{CT}C_{\rm force}$.
	
	Lemma~\ref{lem:temporal_error} on the full graph bounds the second
		term by $C_{\rm temp}e^{CT}\Delta t$.
	Theorem~\ref{thm:local_exp} bounds the third term by
	\begin{equation}\label{app:full_graph_decay}
		\|\rho^F(t_k)-\pi\|_{\ell^2(\pi)}
			\le C_{\rm exp}(1+t_k)e^{-r_1(\gamma)t_k},
	\end{equation}
	where
	\(	C_{\rm exp}=C_{\mathcal L}\pi_{\max}
	\bigl(\pi_{\min}^{-1}\|\rho(0)-\pi\|_{\ell^2(\pi)}
	+\|u(0)\|_{\mathcal U}\bigr).
	\)
	Combining these estimates proves \eqref{eq:total_error}.
\end{proof}
\begin{proof}[Proof of Corollary~\ref{cor:wnl_error}]
	By Lemma~\ref{lem:wnl_l2} and
	\(\|\rho_h^k-\pi\|_{\pi^{-1}}
	=\|\varphi\|_{\ell^2(\pi)}
	\le\pi_{\min}^{-1}\|\rho_h^k-\pi\|_{\ell^2(\pi)}\),
	\[
	W_{\mathrm{NL}}(\rho_h^k,\pi)
	\le
	(1+\eta)\lambda_1^{-1/2}\|\rho_h^k-\pi\|_{\pi^{-1}}
	\le
	\frac{1+\eta}{\pi_{\min}\sqrt{\lambda_1}}
	\|\rho_h^k-\pi\|_{\ell^2(\pi)}.
	\]
	Applying Theorem~\ref{thm:total_error} gives the stated bound.
\end{proof}
\section{Numerical Implementation Details}
\label{app:numerical_details}

\paragraph{Synthetic targets.}
Let \(z=(x,y)\) and \(r=\sqrt{x^2+y^2}\).
The synthetic experiments use the following unnormalized target densities:
\begin{align}
	q_1(x,y)
	&=
	\exp\!\left[-2(r-3)^2\right],\\
	q_2(x,y)
	&=
	\exp\!\left[-2(r-3)^2\right]
	\left[
	\exp\!\left(-2(x-3)^2\right)
	+
	\exp\!\left(-2(x+3)^2\right)
	\right],\\
	q_3(z)
	&=
	\frac14\sum_{\ell=1}^{4}
	\exp\!\left[
	-\frac{\|z-\mu_\ell\|^2}{2\sigma^2}
	\right],
	\qquad \sigma=0.5,
\end{align}
where
\[
\mu_1=(0,2),\qquad
\mu_2=(2,0),\qquad
\mu_3=(0,-2),\qquad
\mu_4=(-2,0).
\]
The second target is the
banana-shaped benchmark. All target densities are evaluated on the
corresponding computational grid and normalized to unit discrete mass.
\paragraph{Numerical settings.}
Unless otherwise stated, the initial density is the standard Gaussian
\(\mathcal N(0,I_2)\).
For the fixed-step experiments, we use adaptive damping unless
a constant value is prescribed, as in the damping-sweep experiments.
Long-range edges are generated node by node using probabilities proportional to \(r^{-(2+\alpha)}\), with existing edges excluded from subsequent candidate sets. For each retained long-range edge \(e=\{i,j\}\), we assign the symmetric weight
$
\nu_e^{\mathrm{num}}
=
\frac{h^2 r_e^{-(2+\alpha)}}{m\,p_e},
$
where \(r_e=\lVert x_i-x_j\rVert\) and \(p_e\) is the normalized proposal probability used by the node that first generates \(e\), computed over its remaining candidate set. Repeated selections are merged. Local KNN edges retain the base weights \(h^2r_e^{-(2+\alpha)}\). The unbiasedness and sampling-error estimates in Section~\ref{sec:sparse_error} concern the sampling scheme with exact inclusion weights defined there.

The equal-wall-clock benchmark is performed on the four-Gaussian-mixture
target using a \(40\)-s computational budget for each method and each
random seed. All runs use the same Intel Core i5-12500 CPU with
NumPy/SciPy linked against Intel MKL, and no GPU acceleration is used.
Five independent seeds are reported.

For the proposed method, the measured algorithmic time includes target
setup, sparse-graph construction, initialization of the graph variables,
and subsequent density evolution. The method uses a \(50\times50\) grid, \(\alpha=1\),
a KNN query size of \(10\), \(m=3\),
\(\Delta t=5\times10^{-4}\), and
\(
\gamma^*=2\sqrt{\lambda_1},
\)
with \(\lambda_1\) computed separately for each sparse-graph realization.

	For the particle baselines, the measured time consists of a \(4\)-s
	warm-up phase followed by a \(36\)-s production phase. Adaptation is
	restricted to the warm-up period. The MCMC baseline is implemented as
	random-walk Metropolis--Hastings (MH), with a frozen proposal scale of
	\(3.815\); MALA uses a frozen step size of \(0.931\). ULD is simulated
	using BAOAB with time step \(0.01\), damping \(\gamma=1\), mass \(1\),
	and inverse temperature \(\beta=1\).

For all methods, KDE construction, KL evaluation, mode analysis,
plotting, and file I/O are excluded from the reported algorithmic time.

\bibliography{refs} 

\end{document}